\DeclareMathSymbol{\leqslant}{\mathalpha}{AMSa}{"36} 
\DeclareMathSymbol{\geqslant}{\mathalpha}{AMSa}{"3E} 
\DeclareMathSymbol{\eset}{\mathalpha}{AMSb}{"3F}     
\renewcommand{\leq}{\;\leqslant\;}                   
\renewcommand{\geq}{\;\geqslant\;}                   
\renewcommand{\le}{\leqslant}                   
\renewcommand{\ge}{\geqslant}                   
\newcommand{\suptwo}[2]{\sup_{\substack{#1 \\ #2}}} 
\newcommand{\sumtwo}[2]{\sum_{\substack{#1 \\ #2}}} 
\newcommand{\prodtwo}[2]{\prod_{\substack{#1 \\ #2}}}     
\newcommand{\ind}{1\hspace{-0.098cm}\mathrm{l}}
\newcommand{\heap}[2]{\genfrac{}{}{0pt}{}{#1}{#2}}
\newcommand{\sfrac}[2]{\mbox{$\frac{#1}{#2}$}}
\newcommand{\ssup}[1] {{\scriptscriptstyle{({#1}})}}
\newcommand{\cF}{\mathcal{F}}
\newcommand{\cG}{\mathcal{G}}
\newcommand{\cI}{\mathcal{I}}
\newcommand{\cL}{\mathcal{L}}
\newcommand{\cZ}{\mathcal{Z}}
\newcommand{\ma}{\mathrm{max}}
\newcommand{\mi}{\mathrm{min}}
\renewcommand{\th}{\theta}
\newcommand{\om}{\omega}
\newcommand{\eps}{\varepsilon}
\newcommand{\lam}{\lambda}
\newcommand{\sig}{\sigma}
\newcommand{\vphi}{\varphi}
\newcommand{\vth}{\vartheta}
\newcommand{\Lam}{\Lambda}
\newcommand{\dto}{\downarrow}
\newcommand{\var}{\mathrm{var}}
\newcommand{\IP}{\mathbb{P}}
\newcommand{\IS}{\mathbb{S}}
\newcommand{\IT}{\mathbb{T}}
\newcommand{\II}{\mathbb{I}}
\newcommand{\IIJ}{\mathbb{J}}
\newcommand{\IN}{\mathbb{N}}
\newcommand{\IZ}{\mathbb{Z}}
\newcommand{\IR}{\mathbb{R}}
\newcommand{\IE}{\mathbb{E}}
\newcommand{\iN}{\in\IN}
\newcommand{\iZ}{\in\IZ}
\newcommand{\iR}{\in\IR}
\newcommand{\be}{\begin{eqnarray*}}
\newcommand{\ee}{\end{eqnarray*}}
\newcommand{\ben}{\begin{eqnarray}}
\newcommand{\een}{\end{eqnarray}}
\theoremstyle{plain}
\newtheorem{theo}{Theorem}[section]
\newtheorem{lemma}[theo]{Lemma}
\newtheorem{prop}[theo]{Proposition}
\newtheorem{corollary}[theo]{Corollary}
\theoremstyle{definition}
\newtheorem{remark}[theo]{Remark}
\newtheorem{example}[theo]{Example}
\renewenvironment{proof}[1][] {{\bf Proof#1.} }{\hspace*{\fill}$\square$\medskip\par}
\begin{document}
\vglue20pt \centerline{\huge\bf Random networks with sublinear preferential}
\medskip

\centerline{\huge\bf attachment: Degree evolutions}

\bigskip
\bigskip

\centerline{by}
\bigskip
\medskip

\centerline{{\Large Steffen Dereich$^{1,2}$ and Peter M\"orters$^2$}}
\bigskip

\begin{center}\it
$^1$Institut f\"ur Mathematik, MA 7-5, Fakult\"at II \\
Technische Universit\"at Berlin
\\ Stra\ss e des 17. Juni 136 \\ 10623 Berlin\\
Germany\\[2mm]
$^2$Department of Mathematical Sciences\\
University of Bath\\
Claverton Down\\
Bath BA2 7AY\\
United Kingdom\\

\vspace{1cm}

{\rm Version of \today}\\
\end{center}

\bigskip
\bigskip
\bigskip

{\leftskip=1truecm
\rightskip=1truecm
\baselineskip=15pt
\small

\noindent{\slshape\bfseries Summary.} We define a dynamic model of random networks, where new vertices
are connected to old ones with a probability proportional to a sublinear function of their degree. We first give
a strong limit law for the empirical degree distribution, and then have a closer look at the temporal
evolution of the degrees of individual vertices, which we describe in terms of large and moderate deviation principles.
Using these results, we expose an interesting phase transition: in cases of \emph{strong} preference
of large degrees, eventually a single vertex emerges forever as vertex of maximal
degree, whereas in cases of \emph{weak} preference, the vertex of maximal degree is changing
infinitely often. Loosely speaking, the transition between the two phases occurs in the case when a new edge is
attached to an existing vertex with a probability proportional to the root of its current degree.
\bigskip

\noindent{\slshape\bfseries Keywords.} Barabasi-Albert model,
sublinear preferential attachment, dynamic random graphs, maximal
degree, degree distribution, large deviation principle,
moderate deviation principle.
\bigskip

\noindent
{\slshape\bfseries 2000 Mathematics Subject Classification.} Primary 05C80 Secondary 60C05 90B15

}

\newpage

\section{Introduction}\label{intro}

\subsection{Motivation}

Dynamic random graph models, in which new vertices prefer to be attached to
vertices with higher degree in the existing graph, have proved to be immensely
popular in the scientific literature recently. The two main reasons for this popularity
are, on the one hand, that these models can be easily defined and modified, and
can therefore be calibrated to serve as models for social networks, collaboration
and interaction graphs, or the web graph. On the other hand, if the attachment
probability is approximately proportional to the degree of a vertex, the dynamics of the
model can offer a credible explanation for the occurrence of power law degree
distributions in large networks.
\medskip

The philosophy behind these preferential attachment models is that growing networks are built
by adding nodes successively. Whenever a new node is added it is linked  by edges to one or more existing nodes with a probability proportional to a function $f$ of their degree. This function $f$, called \emph{attachment rule}, or sometimes \emph{weight function}, determines the qualitative features of the dynamic network.
\medskip

The heuristic characterisation does not amount to a full definition of the model, and some clarifications have to be made, but it is generally believed that none of these crucially influence the long time behaviour of the model.
\medskip

It is easy to see that in the general framework there are \emph{three} main regimes:

\begin{itemize}
\item the \emph{linear} regime, where $f(k) \asymp k$;
\item the \emph{superlinear} regime, where $f(k) \gg k$;
\item the \emph{sublinear} regime, where $f(k) \ll k$.
\end{itemize}

The linear regime has received most attention, and a major
case  has been introduced in the much-cited paper~\cite{BA99}.
There is by now
a substantial body of rigorous mathematical work on this case. In particular, it is shown
in~\cite{B+01}, \cite{Mo02} that  the empirical degree distribution
follows an asymptotic power law  and in \cite{Mo05} that the maximal
degree of the network is growing polynomially of the same order as the degree of the first node.
\medskip

In the superlinear regime the behaviour is more extreme. In \cite{OS05}
it is shown that  a dominant vertex emerges, which attracts a positive proportion of all future edges. Asymptotically, after $n$ steps, this vertex has degree of order~$n$, while the degrees of all other vertices  are bounded. In the most extreme cases eventually all vertices attach to the dominant vertex. 
\medskip

In the linear and sublinear regimes \cite{RTV07} find almost sure convergence of the empirical degree distributions. In the linear regime the limiting distribution obeys a power law, whereas in the sublinear regime  the limiting distributions are stretched exponential distributions.
Apart from this, there has not been much research so far in the sublinear regime, which is the main
concern of the present article, though we include the linear regime in most
of our results.

\medskip
\pagebreak

{Specifically, we discuss a preferential attachment model}
where new nodes connect to a random number of old nodes, which in
fact is quite desirable from the modelling point of view.  More
precisely, the node added in the $n$th step is connected
independently to any old one with probability $f(k)/n$, where $k$ is
the (in-)degree of the old node. We first determine the asymptotic
degree distribution, see Theorem~\ref{EDD}, and find a result which
is in line with that of \cite{RTV07}. The result implies in
particular that, if $f(k)=(k+1)^\alpha$ for~$0\leq\alpha<1$, then the
asymptotic degree distribution~$(\mu_k)$ satisfies
$$\log \mu_k \sim - \sfrac1{1-\alpha}\, k^{1-\alpha},$$
showing that power law behaviour is limited to the linear regime.
Under the
assumption that the strength of the attachment preference is sufficiently weak, we give
very fine results about the probability that the degree of a fixed vertex follows
a given increasing function, see Theorem~\ref{LDP} and Theorem~\ref{MDP}.
These large and moderate deviation results, besides being of independent interest,
play an important role in the proof of our main~result.
This result describes an interesting dichotomy about the behaviour of the vertex of
maximal degree, see Theorem~\ref{main}:
\begin{itemize}
\item \emph{The strong preference case:} If $\sum_n  1/f(n)^2< \infty$, then there exists a single dominant vertex --called \emph{persistent hub}-- which has maximal
degree for all but finitely many times. However, only in the linear regime the number of
new vertices connecting to the dominant vertex is growing polynomially in time.
\item \emph{The weak preference case:} If $\sum_n  1/f(n)^2= \infty$, then there is almost surely \emph{no} persistent hub.
In particular, the index, or time of birth, of the current vertex of maximal degree
is a function of time diverging to infinity in probability.
In Theorem~\ref{WL} we provide asymptotic results
for the index and degree of this vertex, as time goes to infinity.
\end{itemize}
A rigorous definition of the model is given in Section~\ref{12}, and precise statements of all
the principal results follow in Section~\ref{13}. At the end of that section, we also give a
short overview over the further parts of this paper.
\medskip

\subsection{Definition of the model}\label{12}

We now explain how precisely we define our preferential attachment model given a
monotonically increasing \emph{attachment rule} $f \colon \{0,1,2,\ldots\}
\longrightarrow (0,\infty)$ with $f(n)\leq n+1$ for all $n\in\IZ_+:=\{0,1,\ldots\}$.
At time $n=1$ the network consists of a single vertex (labeled $1$) without edges and for each $n\iN$  the graph evolves in the time step $n\to n+1$ according to the following rule
\begin{itemize}
\item add a new vertex (labeled $n+1$) and
\item insert for each old vertex $m$ a directed edge $n+1\to m$ with probability
    $$ \frac{f(\text{indegree of }m\text{ at time }n)}{n}.$$
\end{itemize}
The new edges are inserted independently for each old vertex. Note that the assumptions
imposed on $f$ guarantee that in each evolution step the probability for adding an edge
is smaller or equal to $1$. Formally we are dealing with a directed network, but indeed,
by construction, all edges are pointing from the younger to the older vertex, so that the
directions can trivially be recreated from the undirected (labeled) graph.
\smallskip

There is one notable change to the recipe given in~\cite{KR00}: We do not add one edge in
every step but a random number, a property which is actually desirable in most applications.
Given the graph after attachment of the $n$th vertex, the expected number of edges added in the next  step is
$$\frac 1n \sum_{m=1}^n f\big(\text{indegree of } m \text{ at time }n\big)\, .$$
This quantity converges, as $n\to\infty$ almost surely to a
deterministic limit $\lam$, see Theorem~\ref{EDD}. Moreover, the law
of the number of edges added is asymptotically Poissonian with
parameter~$\lam$. Observe that the \emph{out}degree of every vertex
remains unchanged after the step in which the vertex was created.
Hence our principal interest when studying the asymptotic evolution
of degree distributions is in the \emph{in}degrees.
\smallskip


\subsection{Presentation of the main results}\label{13}

We denote by $\cZ[m,n]$, for $m,n\iN$, $m\leq n$, the indegree of the $m$-th vertex after
the insertion of the $n$-th vertex, and by $X_k(n)$ the proportion of nodes of
indegree $k\iZ_+$ at time $n$, that is
$$X_k(n) = \frac1n \sum_{i=1}^n \ind_{\{\cZ[i,n]=k\}}.$$
Moreover,  denote $\mu_k(n)=\IE X_k(n)$,  $X(n)=(X_k(n) \colon k\iZ_+)$,
and $\mu(n)=(\mu_k(n) \colon k\iZ_+)$.\smallskip

\begin{theo}[Asymptotic empirical degree distribution]\label{EDD} \
\begin{enumerate}
\item[(i)] Let
$$\mu_k=\frac1{1+f(k)} \prod_{l=0}^{k-1} \frac{f(l)}{1+f(l)} \qquad \mbox{ for $k\iZ_+$,}$$
which is a sequence of probability weights. Then, almost surely,
$$\lim_{n\to\infty} X(n)= \mu$$
in total variation norm.

\item[(ii)] If $f$ satisfies $f(k)\leq \eta k+1$ for some $\eta\in(0,1)$, then  the conditional
distribution of the outdegree of the $(n+1)$st incoming node (given
the graph at time $n$)  converges almost surely in variation
topology to the Poisson distribution with parameter
${\lambda:}=\langle\mu, f\rangle$.

\end{enumerate}
\end{theo}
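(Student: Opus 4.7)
The plan is to derive the first-moment dynamics, identify $\mu$ as its stationary solution, upgrade to almost sure convergence by a martingale argument, and transfer pointwise convergence to total variation via Scheffé. Part~(ii) follows by Poisson approximation of Le Cam type, with the sublinearity $f(k)\le\eta k+1$ used to control the maximum indegree.

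\textbf{Mean recursion.} Setting $N_k(n):=nX_k(n)$ and conditioning on~$\cF_n$, in one step each indegree-$k$ vertex moves independently to indegree~$k+1$ with probability $f(k)/n$, and a new vertex of indegree~$0$ is added. Taking expectations,
$$(n+1)\mu_k(n+1)=\bigl(n-f(k)\bigr)\mu_k(n)+f(k-1)\mu_{k-1}(n)+\delta_{k,0}\qquad(f(-1)\mu_{-1}:=0).$$
The unique summable fixed point satisfies $(1+f(k))\mu_k=f(k-1)\mu_{k-1}+\delta_{k,0}$, whose solution is the product formula of the statement. A standard lemma on recursions $a_{n+1}=(1-\alpha_n/n)a_n+\beta_n/n$ with $\alpha_n\to\alpha>0$ and $\beta_n\to\beta$ (giving $a_n\to\beta/\alpha$) then shows $\mu_k(n)\to\mu_k$ by induction on~$k$.

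\textbf{Martingale concentration.} Let $M_k$ be the Doob martingale of $(N_k(n))_n$. Given~$\cF_n$, the jump $N_k(n+1)-N_k(n)$ equals $\delta_{k,0}$ plus the difference of two conditionally independent sums of Bernoulli variables counting the $(k{-}1)\to k$ and $k\to(k{+}1)$ transitions, so the conditional variance of $\Delta M_k(n+1)$ is bounded by $f(k-1)+f(k)$. Hence $\sum_n\Delta M_k(n+1)/n$ is an $L^2$-bounded martingale, converges almost surely, and Kronecker's lemma yields $M_k(n)/n\to 0$ a.s. Writing $D_k(n):=N_k(n)-n\mu_k$, the mean recursion and the fixed-point relation combine to give the affine random recurrence
$$D_k(n+1)=D_k(n)\Bl(1-\tfrac{f(k)}{n}\Br)+\tfrac{f(k-1)}{n}\,D_{k-1}(n)+\Delta M_k(n+1),$$
and induction on~$k$ with a Gr\"onwall-type estimate closes the argument to yield $D_k(n)/n\to 0$, i.e.\ $X_k(n)\to\mu_k$ a.s.

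\textbf{Total variation and part (ii).} Since $\mu$ and every $X(n)$ are probability measures on~$\IZ_+$ and $X_k(n)\to\mu_k$ a.s.\ for each $k\iZ_+$, Scheffé's lemma yields the a.s.\ convergence in total variation, proving~(i). For~(ii), conditional on~$\cF_n$ the outdegree of vertex $n+1$ is a sum of $n$ independent Bernoullis with parameters $p_{m,n}:=f(\cZ[m,n])/n$. Combining Le Cam's inequality with $d_{\mathrm{TV}}(\mathrm{Poi}(s),\mathrm{Poi}(\lambda))\le|s-\lambda|$ gives
$$d_{\mathrm{TV}}\Bl(\mathrm{outdeg}(n{+}1),\,\mathrm{Poi}(\lambda)\Br)\le\sum_m p_{m,n}^2+\bigabs{S_n-\lambda},\quad S_n:=\langle f,X(n)\rangle.$$
The convergence $S_n\to\lambda$ a.s.\ follows from~(i) plus a uniform integrability argument based on the explicit tail decay of $\mu_k$ extracted from the product formula (this is where $\eta<1$ is first used, to ensure $\langle f,\mu\rangle<\infty$). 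For the quadratic term, $\sum_m p_{m,n}^2\le(\max_m p_{m,n})\,S_n$, so one needs $\max_m\cZ[m,n]=o(n)$ a.s. This is obtained from the observation that $(\cZ[m,n]+\eta^{-1})/\prod_{j=m}^{n-1}(1+\eta/j)$ is a nonnegative supermartingale, combined with $\prod_{j=m}^{n-1}(1+\eta/j)\asymp(n/m)^\eta$ and $\eta<1$: a Doob/union-bound argument then gives the polynomial control $\max_m\cZ[m,n]=O(n^\eta\log n)$ a.s. The principal obstacle is precisely this almost-sure maximum-degree estimate, since the Le Cam inequality only becomes informative once $\max_m p_{m,n}\to 0$, and this requires quantitative probabilistic control of the \emph{oldest} vertices rather than only of a typical one.
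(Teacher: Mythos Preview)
Your proof of part~(i) is correct but takes a different route from the paper. The paper exploits a special feature of this model that you do not mention: the indegree processes $(\cZ[m,\cdot])_{m\in\IN}$ are \emph{mutually independent} (because at each step the new vertex connects to old vertices independently). Thus $nX_k(n)$ is a sum of $n$ independent Bernoulli variables, and after establishing $\mu_k(n)\to\mu_k$ (which the paper does by a Markov-chain coupling rather than your induction), a single Chernoff bound plus Borel--Cantelli gives $\liminf_n X_k(n)\ge\mu_k$ a.s. Your Doob--martingale/Kronecker argument with induction on~$k$ is valid and has the advantage of not using this independence, so it would transfer to variants of the model where edges are added dependently; the paper's argument is shorter but model-specific.

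For part~(ii) your approach diverges more substantially from the paper's and contains a genuine gap. The paper does \emph{not} argue directly; it first treats the affine rule $f^{\mathrm a}(k)=\eta k+1$ and shows $Y_n=\langle X(n),\mathrm{id}\rangle\to(1-\eta)^{-1}$ a.s.\ by a martingale argument whose key input is the bound $Y_n\le\cZ[m^*,n]=n^{\eta+o(1)}$, obtained from the persistent-hub theorem (Theorem~\ref{main}, proved independently). A coupling to the affine case then gives $\langle X(n),f\rangle\to\langle\mu,f\rangle$ for general~$f$. You instead try a self-contained argument, but two steps are underspecified. First, ``uniform integrability based on the explicit tail decay of~$\mu_k$'' does not suffice: the tails of~$\mu$ say nothing about the tails of~$X(n)$ for finite~$n$, and one must control $\sum_{k\ge K}f(k)X_k(n)$ uniformly in~$n$, which is precisely the hard part. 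Second, the first-moment Doob bound $\IP(\sup_n W_m(n)>t)\le\eta^{-1}/t$ on your supermartingale is too weak for the union bound over $m\le n$: summing $O(1/t)$ over~$n$ vertices leaves no room. This can be repaired by computing second moments --- one checks $\IE[W_m(n)^2]\le C$ uniformly in $m,n$, whence Chebyshev and a subsequence argument give $\max_m\cZ[m,n]=O(n^{\eta+\varepsilon})=o(n)$ a.s.; this in turn yields $\langle Y\rangle_\infty<\infty$ in the affine case and closes the argument without invoking Theorem~\ref{main}. So your strategy can be salvaged into a proof that is more self-contained than the paper's, but as written the justification for both $S_n\to\lambda$ and the maximal-degree bound is incomplete.
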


\begin{remark} The asymptotic degree distribution coincides with
that in the random tree model introduced in \cite{KR00} and studied
by \cite{RTV07}, if $f$ is chosen as an appropriate multiple of
their weight function. This is strong evidence that these models
show the same qualitative behaviour, {and that our further
results hold \emph{mutatis mutandis} for preferential attachment
models in which new vertices connect to a fixed number of old ones.}
\end{remark}

\begin{example}\label{ex_deg_reg}
Suppose ${f(k)\sim \gamma k^\alpha}$, for $0<\alpha<1$  and $\gamma>0$, then
a straight forward analysis yields that
$$
\log \mu_k \sim - \sum_{l=1}^{k+1} \log\big(1+ l^{-\alpha} \big)
\sim - \sfrac1\gamma\, \sfrac1{1-\alpha} \, k^{1-\alpha}\, .
$$
Hence the asymptotic degree distribution has stretched exponential tails.
\end{example}
\smallskip

In order to analyse the network further, we scale the time
as well as the way of counting the indegree. To the \emph{original}
time $n\iN$ we associate an \emph{artificial} time
$$\Psi(n):=\sum_{m=1}^{n-1} \frac1m \sim \log n,$$ and to the
\emph{original} degree $j\iZ_+$ we associate the \emph{artificial}
degree
$$\Phi(j):=\sum_{k=0}^{j-1}\frac1{f(k)}.$$
An easy law of large numbers illustrates the role of these scalings.

\begin{prop}[Law of large numbers]\label{LLN}
For any fixed vertex labeled $m\in\IN$, we have that
$$ \lim_{n\to\infty} \frac{\Phi(\cZ[m,n])}{\Psi(n)} =1  \qquad \mbox{ almost surely}\, .$$
\end{prop}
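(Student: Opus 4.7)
The plan is to recognise that the scalings $\Phi$ and $\Psi$ are designed precisely to turn $\Phi(\cZ[m,\cdot]) - \Psi(\cdot)$ into a martingale, and then to deduce the result from a standard $L^2$-martingale argument. Concretely, for the fixed vertex $m$ and $n \ge m$, set
$$M_n := \Phi(\cZ[m,n]) - \Psi(n)$$
and let $(\cF_n)$ be the filtration generated by the network up to time $n$. The increment $\Delta_n := \cZ[m,n+1]-\cZ[m,n]$ is, conditionally on $\cF_n$, a Bernoulli random variable with success probability $p_n := f(\cZ[m,n])/n$, and on $\{\Delta_n = 1\}$ the quantity $\Phi(\cZ[m,n])$ increases by $1/f(\cZ[m,n])$. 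Multiplying, the conditional mean increment of $\Phi(\cZ[m,\cdot])$ equals $p_n/f(\cZ[m,n]) = 1/n = \Psi(n+1) - \Psi(n)$, so $(M_n)_{n \ge m}$ is a martingale.

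To upgrade this to $M_n/\Psi(n) \to 0$ almost surely I would control the bracket. Using $\var(\Delta_n \mid \cF_n) = p_n(1-p_n) \le p_n$ and the bound $f \ge f(0) > 0$,
$$\IE\bigl[(M_{n+1}-M_n)^2 \bigm| \cF_n\bigr] = \frac{p_n(1-p_n)}{f(\cZ[m,n])^2} \le \frac{1}{n \, f(0)}.$$
Define the auxiliary martingale $N_n := \sum_{k=m}^{n-1}(M_{k+1}-M_k)/\Psi(k+1)$. Since $\Psi(k) \sim \log k$, the series $\sum_k 1/(k\,\Psi(k+1)^2)$ converges, so $\sup_n \IE[N_n^2] < \infty$ and $N_n$ converges almost surely. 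Kronecker's lemma applied to the positive increasing sequence $\Psi(n) \to \infty$ then yields $M_n/\Psi(n) \to 0$ a.s., which rearranges to the stated conclusion.

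Since the functions $\Phi$ and $\Psi$ are tailor-made for this calculation, there is no substantial obstacle; the one mild point to notice is that only the lower bound $f \ge f(0) > 0$ (no regularity or growth assumption on $f$) is needed to make the bracket small enough for the logarithmic normalisation $\Psi(n)$, reflecting the sharpness of the scalings introduced in the statement.
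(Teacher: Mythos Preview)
Your argument is correct. The martingale identification, the variance bound $\IE[(M_{n+1}-M_n)^2\mid\cF_n]\le 1/(n f(0))$, and the Kronecker-lemma conclusion all go through as you describe.

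The paper uses the same martingale and the same variance bound, but finishes differently: instead of Kronecker's lemma it applies Doob's $L^2$ maximal inequality on dyadic blocks $[s,s+2^{i+1}]$, then Chebyshev and Borel--Cantelli, to obtain the quantitative estimate
\[
\limsup_{t\to\infty}\frac{|M_t|}{\sqrt{t-s}\,\log(t-s)}\le 1 \qquad\text{almost surely},
\]
from which the law of large numbers follows immediately. Your route is more direct for the bare statement, but the paper's version yields this rate as a by-product, and that rate is reused in the same lemma to control the bracket $\langle M\rangle_t$ in the proof of the functional central limit theorem. So the two arguments are close cousins; yours is cleaner if the LLN is all you want, while the paper's packaging is chosen because the intermediate estimate feeds into what comes next.
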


\begin{remark}\label{ReLLN}
Since $\Psi(n)\sim \log n$, we conclude that for any $m\iN$, almost surely,
$$
\Phi(\cZ[m,n])\sim \log n \text{ as }n\to\infty.
$$
In particular, we get for an attachment rule $f$ with   $f(n)\sim \gamma n$ and $\gamma\in(0,1]$, that $\Phi(n)\sim \frac 1{\gamma} \log n$ which implies that
$$
\log \cZ[m,n] \sim \log n^{\gamma}\text{, almost surely.}
$$
{Furthermore, an attachment rule with $f(n)\sim \gamma n^\alpha$ for $\alpha<1$ and $\gamma>0$ leads to
$$
\cZ[m,n]\sim(\gamma (1-\alpha) \log n)^{\frac1{1-\alpha}}.
$$}
\end{remark}

We denote by $\IT:=\{\Psi(n)\colon n\iN\}$ the set of artificial
times, and by $\IS:=\{\Phi(j):j\iZ_+\}$ the set of artificial
degrees. From now on, we refer by \emph{time} to the artificial
time, and by \emph{(in-)degree} to the artificial degree. Further,
we introduce a new real-valued process $(Z[s,t])_{s\in\IT, t \ge
0}$ via
$$Z[s,t]:=\Phi(\cZ[m,n]) \quad \mbox{if $s=\Psi(m)$, $t=\Psi(n)$ and $m\leq n$,}$$
and extend the definition to arbitrary $t$  by  letting
$Z[s,t]:=Z[s,s\vee \max (\IT\cap [0,t])]$. For notational convenience we extend the definition
of $f$ to $[0,\infty)$ by setting $f(u):=f(\lfloor u\rfloor)$ for all $u\in[0,\infty)$,
and linearly interpolate $\Phi$ at the breakpoints $\IS$ so that
$$\Phi(u)=\int_{0}^u \frac1{f(v)}\,dv.$$
We denote by ${\mathcal L}[0,\infty)$ the space of c\`adl\`ag functions  $x\colon
[0,\infty)\to\IR$ endowed with the topology of uniform convergence
on compact subsets of~$[0,\infty)$.

\begin{prop}[Central limit theorem]\label{CLT}
In the case of weak preference, for all $s\in\IT$,
$$\Big(\frac{Z[s,s+\vphi^*_{\kappa t}]-\vphi^*_{\kappa t}}{\sqrt\kappa} \colon t \geq 0 \Big)
\Rightarrow (W_t \colon t\geq0 )\, ,$$ in distribution on ${\mathcal L}[0,\infty)$, 
where $(W_t \colon t\geq 0)$ is a standard Brownian motion
and $(\vphi^*_t)_{t\ge0}$ is the inverse of $(\vphi_t)_{t\ge0}$
given by
$$\vphi_t=\int_0^{\Phi^{-1}(t)}\frac1{f(u)^2}\,du .$$
\end{prop}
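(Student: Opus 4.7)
The scalings $\Phi$ and $\Psi$ are chosen precisely so that $\Phi(\cZ[m,n])$ is, up to a deterministic drift of slope $1/n$ per step, a martingale. Using the piecewise-linear extension (for which $\Phi(k+1)-\Phi(k)=1/f(k)$), the conditional expectation at step $n\to n+1$ equals
$$\IE\bigl[\Phi(\cZ[m,n+1])-\Phi(\cZ[m,n])\bigm|\cF_n\bigr]=\frac{f(\cZ[m,n])}{n}\cdot\frac1{f(\cZ[m,n])}=\frac1n=\Psi(n+1)-\Psi(n).$$
Writing $s=\Psi(m)$, this shows that
$$\tilde M_t:=Z[s,s+t]-t,\qquad t\ge 0,$$
is a piecewise-constant martingale whose jumps are uniformly bounded by $1/f(0)+1/m$. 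A straightforward computation of the second conditional moment of the jump gives the predictable quadratic variation increment $1/(nf(\cZ[m,n]))-1/n^2$. The strategy is to run the functional martingale central limit theorem after a deterministic time change that turns $\langle\tilde M\rangle$ into the identity.

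Identifying the asymptotic quadratic variation is the main analytic step. By Proposition~\ref{LLN} one has $\cZ[m,k]\sim\Phi^{-1}(\Psi(k))$ almost surely; substituting this into the sum $\sum_{k=m}^{n-1}1/(kf(\cZ[m,k]))$ and comparing the Riemann sum with the corresponding integral (for which the monotonicity of $f$ provides sandwich bounds) yields, almost surely,
$$\langle\tilde M\rangle_t\sim\int_{s}^{s+t}\frac{du}{f(\Phi^{-1}(u))}=\vphi_{s+t}-\vphi_s\qquad\text{as }t\to\infty,$$
where the change of variable $u=\Phi(v)$ rewrites the definition $\vphi_r=\int_0^{\Phi^{-1}(r)}f(v)^{-2}\,dv$ in the form used above. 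The weak-preference hypothesis $\sum 1/f(n)^2=\infty$ is equivalent to $\vphi_u\uparrow\infty$, so $\vphi^*$ is defined on all of $[0,\infty)$ and $\vphi^*_{\kappa t}\to\infty$ as $\kappa\to\infty$. Because the increment $\vphi_{s+u}-\vphi_u=\int_u^{u+s}dv/f(\Phi^{-1}(v))$ is bounded uniformly in $u$ (the integrand is non-increasing), we conclude that
$$\frac{1}{\kappa}\,\langle\tilde M\rangle_{\vphi^*_{\kappa t}}=\frac{\kappa t+O(1)}{\kappa}\longrightarrow t\qquad(\kappa\to\infty),$$
pointwise a.s., and monotonicity in $t$ upgrades this to locally uniform convergence on $[0,\infty)$.

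The rescaled processes $Y^\kappa_t:=\tilde M_{\vphi^*_{\kappa t}}/\sqrt{\kappa}$ are then c\`adl\`ag martingales with $\langle Y^\kappa\rangle_t\to t$ locally uniformly a.s.\ and jumps of uniform size at most $(1/f(0)+1/m)/\sqrt{\kappa}$, which tends to $0$. The standard functional martingale CLT (Rebolledo's theorem, or Theorem~VIII.3.11 of Jacod and Shiryaev) therefore gives $Y^\kappa\Rightarrow W$ in the Skorokhod topology on $\cL[0,\infty)$; continuity of the Brownian limit converts this into uniform convergence on compacts, which is exactly the assertion of the proposition.

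\textbf{Main difficulty.} The delicate point is the substitution $\cZ[m,k]\rightsquigarrow\Phi^{-1}(\Psi(k))$ \emph{inside} the function $f$ when controlling $\langle\tilde M\rangle$: Proposition~\ref{LLN} supplies only the equivalence $\Phi(\cZ[m,k])\sim\Psi(k)$, which does not automatically yield $f(\cZ[m,k])\sim f(\Phi^{-1}(\Psi(k)))$ for a general monotone $f$. Exploiting the monotonicity of $f$ one can sandwich the random sum between two deterministic Riemann sums indexed by slightly perturbed times, and a Ces\`aro-type averaging absorbs the perturbation in the limit. Once this step is carried out, the remainder of the argument is an entirely routine application of the martingale invariance principle; the sharper control of fluctuations announced in Theorem~\ref{MDP} is not required for the present CLT.
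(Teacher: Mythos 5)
Your proof is correct and follows essentially the same route as the paper's (Lemma~\ref{degs}): the same martingale decomposition $Z[s,s+t]-t$, the same conditional variance increment $\tfrac{1}{n f(\cZ[m,n])}-\tfrac1{n^2}$, the same monotone sandwich via the law of large numbers to get $\langle M\rangle_{\vphi^*_{\kappa t}}/\kappa\to t$, and the same appeal to the Jacod--Shiryaev functional martingale CLT with uniformly vanishing jumps. The "main difficulty" you flag is exactly the step the paper handles with its $(1\pm\eps)$-perturbed integrals and the monotonicity of $\bar f$, so no gap remains.
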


Our \emph{main} result describes the behaviour of the vertex
of \emph{maximal} degree, and reveals an interesting dichotomy between weak and strong forms
of preferential attachment. 
\smallskip

\begin{theo}[Vertex of maximal degree]\label{main}
Suppose $f$ is concave. Then we have the following dichotomy:
\begin{description}
\item[Strong preference.] If
$$
\sum_{k=0}^\infty  \frac1{f(k)^2}<\infty,
$$
then with probability one there exists a persistent hub, i.e.\ there is a single vertex
which has maximal indegree for all but finitely many times.
\item[Weak preference.] If
$$
\sum_{k=0}^\infty  \frac1{f(k)^2}=\infty,
$$
then with probability one there exists no persistent hub and
the time of birth, or index, of the current hub tends to infinity in
probability.
\end{description}
\end{theo}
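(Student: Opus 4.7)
The plan is to handle the two cases separately, both resting on the observation that for each fixed vertex $m$, the process
$$M_n^m := \Phi(\cZ[m,n]) - \Psi(n) + \Psi(m), \qquad n \ge m,$$
is a mean-zero martingale. Indeed $\IE[\Phi(\cZ[m,n+1]) - \Phi(\cZ[m,n]) \mid \cF_n] = (1/f(\cZ[m,n])) \cdot (f(\cZ[m,n])/n) = 1/n$, which matches $\Psi(n+1)-\Psi(n) = 1/n$; the conditional second moment of the increment equals $1/(n f(\cZ[m,n])) - 1/n^2$, so the conditional quadratic variation up to time $n$ is of order $\sum_{j=m}^{n-1} 1/(j f(\cZ[m,j]))$. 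Using the asymptotic $\Phi(\cZ[m,j]) \sim \log(j/m)$ from Proposition \ref{LLN}, the change of summation variable $k = \cZ[m,j]$ converts this into $\sum_k 1/f(k)^2$. So the theorem's dichotomy is literally the dichotomy between $L^2$-boundedness and unboundedness of $(M_n^m)_n$, and Proposition \ref{CLT} is the weak-preference scaling limit of precisely this martingale; concavity of $f$ enters in this step to control $\IE[1/f(\cZ[m,j])]$ via convexity of $1/f$.

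\textbf{Strong preference case.} Under $\sum 1/f(k)^2 < \infty$ the martingale $M^m$ converges a.s.\ and in $L^2$ to some random $\xi_m$, so $\Phi(\cZ[m,n]) - \Psi(n) \to \xi_m - \Psi(m)$ a.s. The large-$n$ ordering of $\cZ[m,n]$ against $\cZ[m',n]$ is therefore determined by the sign of $(\xi_m - \Psi(m)) - (\xi_{m'} - \Psi(m'))$. Since $\Psi(m) \to \infty$ while a Freedman/Azuma-type exponential inequality for $M^m$ (its jumps being at most $1/f(0)$) yields uniformly sub-Gaussian tails for $\xi_m$, only finitely many indices $m$ can realise a value $\xi_m - \Psi(m)$ above any given threshold, so the supremum is almost surely attained at a unique random vertex $m^*$. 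To upgrade this asymptotic comparison to dominance for all but finitely many $n$, I would apply Doob's maximal inequality to the difference martingale $M^{m'} - M^{m^*}$ to bound $\IP(\exists n \ge n_0 : \cZ[m',n] \ge \cZ[m^*,n])$ by an expression involving the tail $\sum_{k \ge K(m')} 1/f(k)^2$, and then sum over $m'$ and invoke Borel--Cantelli.

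\textbf{Weak preference case.} Here $M^m$ is no longer $L^2$-bounded; by Proposition \ref{CLT} its fluctuations diverge, eventually dwarfing any fixed head-start $\Psi(m') - \Psi(m)$. To show no persistent hub exists, I fix $m$ and, for each large $n$, identify a younger vertex $m' = m'(n)$ born on a suitable intermediate time-scale so that Proposition \ref{CLT} (in a slightly sharpened, quantitative form) yields $\IP(\cZ[m',n] > \cZ[m,n] \mid \cF_m) \ge c > 0$. Along a sparse subsequence $n_j \to \infty$ the overtaking events can be arranged to involve vertices born in pairwise disjoint time-ranges, so that conditionally on the trajectory of $\cZ[m,\cdot]$ they are essentially independent; the second Borel--Cantelli lemma then gives that vertex $m$ is overtaken infinitely often a.s., and since $m$ was arbitrary no persistent hub exists. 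For the divergence in probability of the hub's index, the same CLT lower bound combined with a union bound over $m \le K$ shows $\IP(\text{current hub has index} \le K) \to 0$ as $n \to \infty$ for every fixed $K$.

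\textbf{Main obstacle.} The decoupling argument in the weak case is the principal subtlety: all overtaking events at times $n_j$ are driven by the same trajectory $(\cZ[m,\cdot])$, so one must condition on this trajectory and carefully exploit the independence of increments of the freshly-born competitors to turn the pointwise positive-probability lower bound into an honest divergent sum of independent events to which the second Borel--Cantelli lemma applies. In the strong case, the parallel technical point is quantitative uniform control of $|M_n^m - \xi_m|$ across $m$: one needs the Borel--Cantelli sum $\sum_{m'} \IP(\exists n \ge n_0 : \cZ[m',n] \ge \cZ[m^*, n])$ to be finite, and this works because the relevant tail is governed by $\sum_{k \ge K(m')} 1/f(k)^2 \to 0$ as $m' \to \infty$, precisely the content of strong preference.
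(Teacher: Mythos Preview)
Your martingale framework is the right starting point and matches the paper's Lemma~2.1, but there are two genuine gaps, one in each case.

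\textbf{Strong preference.} You assert that the supremum of $\xi_m - \Psi(m)$ is attained at a \emph{unique} $m^*$, but your argument only shows that finitely many indices exceed any given threshold, i.e.\ that the supremum is attained. Ruling out ties---that $\IP(\xi_m - \Psi(m) = \xi_{m'} - \Psi(m'))=0$ for $m\neq m'$---is exactly where concavity enters in the paper, via Proposition~2.2, which establishes that the law of each $\xi_m$ is absolutely continuous with respect to Lebesgue measure. Your proposed role for concavity (controlling $\IE[1/f(\cZ[m,j])]$) is not where it is actually needed. Separately, your Borel--Cantelli step is phrased in terms of the random vertex $m^*$, which creates a measurability issue; the paper sidesteps this by proving (Proposition~6.1) that only finitely many vertices ever satisfy $Z[s,t]\ge t-\eta$ for some $t\ge s$, hence only finitely many ever overtake vertex~$0$. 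This uses a concentration bound on the inverse-time martingale $\bar T^s_u = u - T_s[0,u)$, yielding $\IP(A_s)\lesssim e^{-cs}$ with $c>1$, which is exactly what is needed to beat the density $e^s$ of points in $\IT$.

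\textbf{Weak preference.} Your decoupling scheme with young competitors and the second Borel--Cantelli lemma is unnecessarily delicate, and you correctly identify it as the main obstacle. The paper avoids it entirely: for any fixed vertex $s$, the \emph{difference} $Z[s,t]-Z[0,t]$ is a sum of two independent martingales each satisfying the functional CLT of Proposition~1.3, so the rescaled difference converges to a Brownian motion. Since Brownian motion changes sign at arbitrarily large times, $Z[s,t]$ is both above and below $Z[0,t]$ infinitely often, so $s$ cannot be a persistent hub. No conditioning, no young competitors, no second Borel--Cantelli.
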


\begin{remark}{Without the assumption of concavity of $f$, the assertion remains true in the weak preference
regime. In the strong preference regime our results still imply that, almost surely, the number of
vertices, which at some time have maximal indegree, is finite.}
\end{remark}

\begin{remark}
In the weak preference case the information about the order of the vertices is asymptotically lost:
as a consequence of the proof of Theorem \ref{main}, we have for two nodes $s<s'$ in $\IT$ that
$$\lim_{t\to\infty} \IP(Z[s,t]>Z[s',t])=\sfrac12,$$
a phenomenon reminiscent of \emph{propagation of chaos}.
Conversely, in the strong preference case, the information about the
order is not lost completely and one has
$$\lim_{t\to\infty} \IP(Z[s,t]>Z[s',t])>\sfrac12.$$
\end{remark}
\smallskip

Investigations so far were centred around \emph{typical}
vertices in the network. Large deviation principles, as provided
below, are the main tool to analyse \emph{exceptional} vertices in
the random network. Throughout we use the large-deviation
terminology of \cite{DemZei98} and, from this point on, the focus is
on the weak preference case.
\smallskip

Our aim is to determine the typical age and indegree evolution of
the hub. For this purpose we assume that
\begin{equation}\label{ass}
\begin{aligned}
\bullet & \mbox{ $f$ is regularly varying with index~$0\leq \alpha<\sfrac12$,} \\[1mm]
\bullet & \mbox{ for some~$\eta<1$, we have $f(j)\leq \eta(j+1)$ for all $j\in\IZ_+$.}\hspace{5cm}
\end{aligned}
\end{equation}
 We set $\bar f:=f\circ\Phi^{-1},$ and recall 
from Lemma \ref{reglema} in the appendix 
that we can represent $\bar f$ as $\bar f(u)=u^{\alpha/(1-\alpha)} 
\,\bar \ell(u)$ for $u>0$, where $\bar \ell$ is a slowly varying function.
We denote by ${\mathcal I}[0,\infty)$ the space of nondecreasing
functions $x\colon [0,\infty)\to\IR$ with $x(0)=0$
endowed with the topology of uniform convergence on compact 
subintervals of $[0,\infty)$.

\begin{theo}[Large deviation principles]\label{LDP}
Under assumption~\eqref{ass},
for every $s\in\IT$, the family of functions
$$\Big(\frac1{\kappa} Z[s,s+\kappa t] \colon t\geq 0 \Big)_{\kappa>0}$$
satisfies large deviation principles on the space ${\mathcal I}[0,\infty)$,
\begin{itemize}
\item with speed $(\kappa^{\frac1{1-\alpha}}\bar \ell(\kappa))$ and good rate function
$$J(x) =  \begin{cases}
\int_{0}^\infty x_t^{\frac{\alpha}{1-\alpha}} [1-\dot x_t+\dot x_t \log \dot x_t]\,dt&\text{ if }x\text{ is absolutely continuous,}\\
\infty &\text{ otherwise}.
\end{cases}$$
\item and with speed $(\kappa)$ and good rate function
$$K(x) =  \begin{cases} a\, f(0)
&\text{ if }x_t=(t-a)_+ \text{ for some }a\geq 0,\\
\infty &\text{ otherwise}.
\end{cases}$$
\end{itemize}
\end{theo}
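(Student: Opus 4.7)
The plan is to reduce the process $(Z[s,s+\cdot])$ to a pure-birth Markov jump process $(\zeta_u)_{u\ge 0}$ in continuous artificial time, the one with $\zeta_0=0$ and jump rate $f(k)$ from state~$k$ to~$k+1$, and to set $Y_u:=\Phi(\zeta_u)$. Because the discrete attachment probabilities~$f(\cdot)/n$ become constant rates~$f(\cdot)$ after the change of time scale $dn/n\to d\Psi$, a standard coupling together with a Bernoulli-to-Poisson approximation shows that $Z[s,s+\cdot]$ and~$Y_\cdot$ agree up to exponentially negligible errors at both speeds considered. It therefore suffices to prove the LDPs for the rescaled process $y^\kappa_t:=Y_{\kappa t}/\kappa$.

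For the first LDP I would exploit the representation $\tau_k=\sum_{j=0}^{k-1}E_j/f(j)$ of the jump times of~$\zeta$ in terms of i.i.d.\ $E_j\sim\mathrm{Exp}(1)$, and the duality $\{y^\kappa_t\ge x\}=\{\tau_{\Phi^{-1}(\kappa x)}\le \kappa t\}$. A G\"artner-Ellis computation on these independent sums yields, after invoking $\bar f(u)=u^{\alpha/(1-\alpha)}\bar\ell(u)$ and Potter-type uniform control of the slowly-varying factor, the limiting Hamiltonian
$$H(y,\theta)=y^{\alpha/(1-\alpha)}\Bigl(e^{\theta/y^{\alpha/(1-\alpha)}}-1\Bigr).$$
Its Fenchel-Legendre dual in the momentum variable is $L(y,v)=y^{\alpha/(1-\alpha)}[v\log v-v+1]$, whence $J(x)=\int_0^\infty L(x_t,\dot x_t)\,dt$. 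Promoting the resulting finite-dimensional LDPs to a pathwise LDP on ${\mathcal I}[0,\infty)$ requires exponential tightness, which follows from monotonicity of the trajectories together with an exponential Chebyshev estimate on~$\tau_k$.

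For the second LDP at the coarser speed~$\kappa$, the key observation is that $s_\kappa=\kappa^{1/(1-\alpha)}\bar\ell(\kappa)\gg\kappa$ (at least for $\alpha>0$), so the first LDP already implies that any path~$x$ with $J(x)>0$ carries probability at most $e^{-s_\kappa J(x)}=e^{-\omega(\kappa)}$, i.e.\ infinite cost on the scale~$\kappa$. The only $J$-null paths which differ from $x_t=t$ are precisely the shifted linear trajectories $x_t=(t-a)_+$, because $\bar f$ vanishes at the origin and so does the Lagrangian at $y=0$. The probability of waiting at state~$0$ for artificial time $\kappa a$ equals $\IP(\tau_1>\kappa a)=e^{-\kappa a f(0)}$, yielding the rate $K((t-a)_+)=a f(0)$; combining this with the ``infinite-cost-otherwise'' observation and the exponential tightness from the first LDP completes the second LDP.

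The main technical obstacle is the singularity of the continuum limit at the origin: $\bar f(y)\to 0$ as $y\downarrow 0$ when $\alpha>0$, while the microscopic jump rate $f(0)$ remains strictly positive. This mismatch is precisely what makes~$J$ vanish on the family~$\{(t-a)_+:a\ge0\}$ and simultaneously forces the existence of the coarser LDP with the additional boundary cost~$af(0)$. Matching the upper and lower bounds uniformly in a shrinking neighbourhood of~$y=0$ via Potter's inequalities, and gluing the small-$y$ and bulk regimes, is the delicate technical step of the whole argument; the flat case $\alpha=0$ also requires some separate book-keeping since then $s_\kappa$ and~$\kappa$ have comparable orders up to a slowly varying factor.
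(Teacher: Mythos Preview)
Your overall plan is the paper's: couple $Z[s,s+\cdot]$ to the pure-birth process $Y=\Phi(\zeta)$ with independent holding times $E_j/f(j)$ so that the two are exponentially equivalent at both speeds (the paper's Lemmas~\ref{appro2}--\ref{le1025-1} and Proposition~\ref{expoapp}), prove the first LDP for~$Y$ via G\"artner--Ellis on the holding-time sums, and then derive the second LDP from the first by exploiting $\kappa^{1/(1-\alpha)}\bar\ell(\kappa)\gg\kappa$ together with the exact cost $e^{-\kappa af(0)}$ of the initial waiting time at state~$0$.

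One point is genuinely muddled, though. G\"artner--Ellis on the sums $\tau_k=\sum_{j<k}E_j/f(j)$ does \emph{not} produce your Hamiltonian $H(y,\theta)=y^{\alpha/(1-\alpha)}\bigl(e^{\theta y^{-\alpha/(1-\alpha)}}-1\bigr)$. The log-MGF of an exponential is $\xi(u)=-\log(1-u)$, and the limiting cumulant for the occupation time $T[\kappa u,\kappa v)/\kappa$ is the paper's
\[
\Lambda_{u,v}(\zeta)=\int_u^v s^{\alpha/(1-\alpha)}\,\xi\bigl(\zeta s^{-\alpha/(1-\alpha)}\bigr)\,ds,
\]
with Legendre transform $\Lambda^*_{u,v}$. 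Your $H$ is the Feng--Kurtz Hamiltonian of the \emph{forward} process, computed from the generator rather than from the holding times. The two pictures are dual---one checks the identity $t\,\psi(\delta/t)=\delta\,\xi^*(t/\delta)$, so both lead to the same Lagrangian $L(y,v)=y^{\alpha/(1-\alpha)}\psi(v)$---but the passage from $\Lambda^*_{u,v}$ to $L$ is precisely the content of the paper's Lemma~\ref{simplJ}, and it is the step you have skipped. After that, the paper does not rely merely on ``exponential tightness plus finite-dimensional LDPs'': it takes a Dawson--G\"artner projective limit to the pointwise topology (Lemma~\ref{weakldp}), proves goodness of~$J$ by an Arzel\`a--Ascoli argument (Lemma~\ref{Jgood}), and then upgrades to uniform convergence via a discretisation (Lemma~\ref{uniapp}). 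Your second-LDP argument is essentially the paper's; note only that its upper bound uses the structural fact that $\{J\le\eps\}$ sits in a $\delta$-tube around the family $\{(t-a)_+\}$, not just that $\{J>0\}$ is superexponentially rare.
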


\begin{remark} The large deviation principle states, in particular, that the most likely deviation
from the growth behaviour in the law of large numbers is having zero indegree for some (unusually
long) time, and after that time typical behaviour kicking in.
\end{remark}

More important for our purpose is a moderate deviation principle,
which describes deviations on a finer scale. Similar as before, 
we denote by ${\mathcal L}(0,\infty)$ the space of c\`adl\`ag functions  $x\colon
(0,\infty)\to\IR$ endowed with the topology of uniform convergence
on compact subsets of~$(0,\infty)$, and always use the convention
$x_0:=\liminf_{t\downarrow 0} x_t$.

\begin{theo}[Moderate deviation principle]\label{MDP}
Suppose \eqref{ass} and that $(a_\kappa)$ is regularly varying, so that the limit
$$c:= \lim_{\kappa\uparrow\infty}
a_\kappa\, \kappa^{\frac{2\alpha-1}{1-\alpha}} \,
\bar{\ell}(\kappa) \in[0,\infty)$$ exists.
If
$\kappa^{\frac{1-2\alpha}{2-2\alpha}} \,
\bar{\ell}(\kappa)^{-\frac12} \ll a_\kappa \ll \kappa,$ then, for
any  $s\in\IT$,  the family of functions
$$\Big( \frac{Z[s,s+\kappa t] - \kappa t}{a_\kappa} \colon t \geq 0 \Big)_{\kappa>0}$$
satisfies a large deviation principle on ${\mathcal L}(0,\infty)$
with speed $(a_\kappa^2 \, \kappa^{\frac{2\alpha-1}{1-\alpha}} \,
\bar{\ell}(\kappa))$ and good rate function
$$I(x)= \left\{ \begin{array}{ll}\frac 12  \int_0^\infty (\dot{x}_t)^2 \, t^{\frac{\alpha}{1-\alpha}} \, dt
- \sfrac1c\,f(0)\,x_0 & \mbox{ if $x$ is absolutely continuous and $x_0\leq 0$,}\\
\infty & \mbox{ otherwise,} \end{array} \right.$$
where we use the convention $1/0=\infty$.
\end{theo}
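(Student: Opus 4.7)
The plan is to exploit the pure-birth representation of $(Z[s,s+t])_{t\ge0}$. In the artificial time coordinate, the process $Z[s,s+\cdot]$ jumps from $\Phi(k)$ to $\Phi(k)+1/f(k)$ at rate $f(k)$, so for i.i.d.\ standard exponentials $E_1,E_2,\ldots$ and jump times $T_k:=\sum_{i=1}^k E_i/f(i-1)$ we have $Z[s,s+T_k]=\Phi(k)$. For $T_k\le t<T_{k+1}$ this gives the identity
$$
Z[s,s+t]-t \;=\; -\sum_{i=1}^{k} \frac{E_i-1}{f(i-1)} \;-\; (t-T_k),
$$
so up to a boundary error of size $E_{k+1}/f(k)$, the fluctuation of $Z$ around its law-of-large-numbers limit equals a random walk $S_k:=-\sum_{i=1}^{k}(E_i-1)/f(i-1)$ with independent mean-zero increments of variance $1/f(i-1)^2$. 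A standard exponential tail estimate together with Proposition~\ref{LLN} shows that on the MDP scale the random index $N_t:=\max\{k\colon T_k\le t\}$ can be replaced by $\Phi^{-1}(t)$, reducing the problem to an MDP for the deterministic-index family
$$
\Big( t \mapsto \frac{1}{a_\kappa} \sum_{i=1}^{\Phi^{-1}(\kappa t)} \frac{1-E_i}{f(i-1)} \Big)_{\kappa>0}.
$$

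For the bulk of the path, bounded away from $t=0$, I would apply a G\"artner--Ellis or Mogulskii argument. The logarithmic moment generating function of $(1-E_i)/f(i-1)$ has the quadratic expansion $\lambda^2/(2f(i-1)^2)+O(\lambda^3/f(i-1)^3)$ near zero, and summing over $\Phi^{-1}(\kappa t_1)<i\le\Phi^{-1}(\kappa t_2)$ one evaluates $\sum 1/f(i-1)^2=\varphi_{\kappa t_2}-\varphi_{\kappa t_1}$ via Karamata's theorem under the regular variation assumption~\eqref{ass}. Divided by the speed $a_\kappa^2\kappa^{(2\alpha-1)/(1-\alpha)}\bar\ell(\kappa)$, the rescaled cumulant generating function then converges to the Legendre dual of $\tfrac{1}{2}\int_{t_1}^{t_2}\dot x_t^2\,t^{\alpha/(1-\alpha)}\,dt$. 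This yields the finite-dimensional MDP on intervals bounded away from zero, and the lift to ${\mathcal L}(0,\infty)$ follows from exponential tightness in the uniform topology, obtained via a Bernstein-type maximal inequality for the martingale $S_k$.

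The boundary term $-f(0)x_0/c$ originates from the very first holding time. Since $\IP(Z[s,s+a_\kappa\eta]=0)=\exp(-f(0)\,a_\kappa\eta)$, division by the MDP speed and the definition of $c$ yield a limit cost of $f(0)\eta/c$ for a path with $x_0=-\eta\le0$. For the upper bound one splits the trajectory at a small time $\delta$, applies the bulk estimate to $t\ge\delta$, and lets $\delta\downarrow0$; for the lower bound one explicitly prescribes an initial vacancy of the correct length followed by an independent realisation of the bulk behaviour, and observes that the two costs add. The hardest step, in my view, is the combination of regular variation and inhomogeneity: uniform (Potter-type) bounds on $\bar\ell$ are needed to control the cumulant generating function across compact subintervals of $(0,\infty)$, and a clean separation of the near-origin contribution from the bulk is required so that the boundary values $f(0),f(1),\ldots$ influence the rate function only through the single linear term in $x_0$. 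Exponential tightness on the non-locally-compact path space ${\mathcal L}(0,\infty)$ is a further hurdle, and cannot be reduced to the CLT of Proposition~\ref{CLT} since the MDP scale $a_\kappa$ lies strictly above the CLT scale.
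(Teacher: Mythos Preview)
Your outline is close to the paper's in spirit but contains one genuine gap. The process $(Z[s,s+t])_{t\ge0}$ is \emph{not} a pure-birth process with exponential holding times: it is the piecewise-constant extension of a discrete-time chain, and the sojourn $T_s[u]$ at level $u=\Phi(k)$ is a geometric-type variable (the waiting time for a success in Bernoulli trials with success probabilities $f(k)/n$, $n\ge\Psi^{-1}(s)$), not an exponential. Your identity $Z[s,s+T_k]=\Phi(k)$ with $T_k$ built from i.i.d.\ exponentials therefore describes a different process. The paper resolves this by defining the pure-birth process $(Z_t)$ separately via~\eqref{Zdef} and proving, through an explicit coupling of the discrete and exponential holding times (Lemmas~\ref{appro2} and~\ref{le1025-1}), that $(Z[s,s+\kappa\cdot])$ and $(Z_{\kappa\cdot})$ are exponentially equivalent uniformly in~$s$ (Proposition~\ref{expoapp}). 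This step is short but cannot be omitted; once it is in place, your programme applies to $(Z_t)$ rather than to $Z[s,\cdot]$ directly.

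Beyond this, your route and the paper's differ mainly in orientation. The paper works dually through the occupation times $T[\kappa u,\kappa v)$, which are sums of independent exponentials and admit a direct G\"artner--Ellis analysis (Lemma~\ref{MDPocc}); the passage back to $Z$ is then a brief exponential-equivalence argument, see~\eqref{expeq}. You instead fix time and study a randomly indexed walk $S_{N_t}$, which requires replacing $N_t$ by $\Phi^{-1}(t)$ --- the same inversion carried out from the other side. For the lift to $\mathcal{L}(0,\infty)$ the paper avoids proving exponential tightness outright: it first obtains the MDP in the pointwise topology via Dawson--G\"artner (Lemma~\ref{pointwise}) and then upgrades the topology using piecewise-constant approximations together with \cite[Theorem~4.2.23]{DemZei98} (Lemmas~\ref{AA} and~\ref{ea}), which sidesteps the non-local-compactness issue you flag. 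Finally, the boundary term $-f(0)x_0/c$ arises in the paper not from splitting off the first holding time but from the cutoff $\theta<f(0)/c$ in the log-moment-generating function of $T[0,\kappa v)$ (the case $u=0$ of Lemma~\ref{MDPocc}), which produces the linear piece of $I_{[0,v)}$ after Legendre transform; your two-stage argument is correct heuristically, and indeed the paper uses exactly your device of prescribing an initial vacancy to complete the lower bound that G\"artner--Ellis does not supply.
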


\begin{remark}{If $c=\infty$ there is still a moderate deviation principle
on the space of functions $x\colon (0,\infty) \to \IR$ with the
topology of pointwise convergence. However, the rate function $I$,
which has the same form as above with $1/\infty$ interpreted as
zero, fails to be a good rate function.}
\end{remark}

\begin{remark}\label{CLT2} Under assumption~\eqref{ass} the 
central limit theorem of Proposition~\ref{CLT} can be stated
as a complement to the moderate deviation principle: For $a_\kappa\sim
\kappa^{\frac{1-2\alpha}{2-2\alpha}} \bar\ell(\kappa)^{-\frac12}$,
we have
$$\left(\frac{Z[s,s+\kappa t]-\kappa t}{a_\kappa} \colon t\geq0
\right)\Rightarrow \Bigl( \sqrt{\sfrac{1-\alpha}{1-2\alpha}}\,
W_{t^{\frac{1-2\alpha}{1-\alpha}}}\colon t\geq 0\Bigr).$$ 
See Section~\ref{sec2.1} for details.
\end{remark}

{Our final result describes weak limit laws for index and
degree of the vertex of maximal degree. This result relies on the
moderate deviation principle above.}
\smallskip

\begin{theo} [Limit law for age and degree of the vertex of maximal degree]\label{WL}
Suppose~$f$ is regularly varying with index $\alpha<\frac12$. Define
$s^*_t$ to be the index of the hub at time~$t$, and
$Z^{\max}_t=Z[s^*_t,t]$ to be the corresponding maximal indegree.
One has, in probability,
$$
s_t^* \sim Z_t^{\max} - t \sim  \frac12 \frac{1-\alpha}{1-2\alpha}\frac{t^{\frac{1-2\alpha}{1-\alpha}}}{\bar \ell(t)}=\frac12 \frac{1-\alpha}{1-2\alpha}\frac{t}{\bar f(t)}.
$$
Moreover, in probability on $\cL(0,\infty)$,
$$\lim_{t\to\infty}
\Big(\frac{Z[s^*_t,s^*_t+tu]-tu}{t^{\frac{1-2\alpha}{1-\alpha}} \bar\ell(t)^{-1}} \colon u\geq 0 \Big)
=\Big(\sfrac{1-\alpha}{1-2\alpha} \bigl(u^{\frac{1-2\alpha}{1-\alpha}}\wedge 1\bigr)\colon u\geq 0\Big).
$$
\end{theo}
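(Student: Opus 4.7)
The plan is to deduce Theorem~\ref{WL} from a saddle-point analysis of the extremal-value problem underlying the hub, using as input the moderate deviation principle of Theorem~\ref{MDP} applied with scale $a_\kappa=\vphi_\kappa$. Under assumption~\eqref{ass} one has $\vphi_\kappa\sim c\,\kappa/\bar f(\kappa)$ with $c=(1-\alpha)/(1-2\alpha)$, which is precisely the scale in the statement, and the MDP speed simplifies to $c\,\vphi_\kappa$. The key auxiliary computation is the one-point rate function $I^\ast(y):=\inf\{I(x):x_1\ge y,\,x_0\le 0\}$ for $y>0$. The Euler--Lagrange equation $\tfrac{d}{du}\bigl(u^{\alpha/(1-\alpha)}\dot x_u\bigr)=0$ with $x_0=0$ and $x_1=y$ is solved by the concave power-law minimizer $x_u=y\,u^{(1-2\alpha)/(1-\alpha)}$ (the boundary term $-f(0)x_0/c$ is inactive because for $y>0$ increasing $-x_0$ only raises the action), yielding
\[
I^\ast(y)=\frac{1-2\alpha}{2(1-\alpha)}\,y^2.
\]
Multiplying by the speed $c\,\vphi_t$ and noting that at $u=1$ with $\kappa=t-s\sim t$ the deviation to control equals $(s+v)/\vphi_t$, this produces the crucial one-vertex tail estimate
\[
\IP\bigl(Z[s,t]\ge t+v\bigr)\le\exp\!\Bigl(-(1-o(1))\,\frac{(s+v)^2}{2\vphi_t}\Bigr),
\]
uniformly for $s=o(t)$ and $v=O(\vphi_t)$.

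With this in hand I would run a first/second moment argument on the count $N_v:=|\{s\in\IT\cap[0,t]:Z[s,t]\ge t+v\}|$. Since there are approximately $e^s$ vertices born up to artificial time $s$, the first moment satisfies
\[
\IE N_v\approx\int e^s\exp\!\Bigl(-\frac{(s+v)^2}{2\vphi_t}\Bigr)\,ds,
\]
a Laplace-type integral with saddle at $s+v=\vphi_t$ and saddle value $\vphi_t/2-v$. For $v=(1+\eps)\vphi_t/2$ this vanishes, yielding the upper bound $Z^{\max}_t-t\le(1+\eps)\vphi_t/2$ in probability; the same integral restricted to $s\notin[(1-\eps),(1+\eps)]\cdot\vphi_t/2$ forces $s^\ast_t$ to lie close to $\vphi_t/2$. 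For the matching lower bound at $v=(1-\eps)\vphi_t/2$ the first moment diverges like $\exp(\eps\vphi_t/2)$ and I would apply Paley--Zygmund. A structural simplification here is that by construction the Bernoulli streams driving distinct indegree processes $Z[s,\cdot]$ and $Z[s',\cdot]$ are independent, so the events $\{Z[s,t]\ge t+v\}$ are pairwise independent and $\var N_v\le\IE N_v$, which immediately gives $\IP(N_v\ge 1)\to 1$.

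The step I expect to be the main technical obstacle is extending the tail estimate to hold uniformly as the birthtime $s$ ranges up to order $\vphi_t$, together with matching lower bounds sharp enough at the rate-function level to localize the saddle. Theorem~\ref{MDP} is stated for a fixed $s\in\IT$ and in pathwise form, which loses uniformity and sharp constants. I would sidestep the pathwise MDP and instead work directly with the exponential martingale
\[
M_n=e^{\theta\cZ[m,n]}\prod_{k=m}^{n-1}\Bigl(1+(e^\theta-1)\,\frac{f(\cZ[m,k])}{k}\Bigr)^{-1}
\]
of the indegree chain $n\mapsto\cZ[m,n]$, optimizing $\theta$ on the relevant moderate scale and using Karamata-type estimates for $\Phi$ and $\bar f$ to evaluate the cumulant; this reduces the uniformity question to a scalar calculation done once and for all.

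Finally, the profile statement follows by a Laplace-principle argument on $\cL(0,\infty)$. Conditioned on $(s^\ast_t,Z^{\max}_t-t)$ being concentrated at $(\vphi_t/2,\vphi_t/2)$, the rescaled trajectory $u\mapsto (Z[s^\ast_t,s^\ast_t+tu]-tu)/(t/\bar f(t))$ concentrates on the unique minimizer of $I$ with terminal value $c$ at $u=1$, which is precisely the function $u\mapsto c\bigl(u^{(1-2\alpha)/(1-\alpha)}\wedge 1\bigr)$: the Euler--Lagrange minimizer on $[0,1]$ is the power law identified above, while the plateau for $u\ge 1$ reflects that no further MDP cost is paid beyond the current observation time and the trajectory simply continues at the typical unit rate. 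Convergence in probability on $\cL(0,\infty)$ then follows from the exponential tightness inherent in the MDP together with uniqueness of this minimizer.
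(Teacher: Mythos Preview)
Your overall strategy coincides with the paper's: both reduce Theorem~\ref{WL} to an extreme--value analysis built on the one--point moderate--deviation tail
\[
\log\IP\bigl(Z[s,\kappa]\ge \kappa+a_\kappa\zeta\bigr)\sim -a_\kappa\cdot\tfrac{1-2\alpha}{2(1-\alpha)}\,(s/a_\kappa+\zeta)^2,
\]
together with the full independence of the indegree evolutions $Z[s,\cdot]$ for distinct $s$ (so the paper writes the distribution of $\max_{s\in\II_\kappa}Z[s,\kappa]$ as an explicit product, which is a cleaner version of your Paley--Zygmund step). Your saddle $s+v=\vphi_t$, $s^\ast=v=\vphi_t/2$ is exactly the paper's maximizer of $h(u)=-u+\sqrt{\tfrac{2-2\alpha}{1-2\alpha}u}$ at $u_{\max}=\tfrac12\tfrac{1-\alpha}{1-2\alpha}$, just in different coordinates; and for the trajectory statement the paper isolates the same power--law minimizer, proving uniqueness via the reverse H\"older inequality rather than Euler--Lagrange.

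Two points where the paper's route differs from what you propose. First, the uniformity in $s$ that you flag as the main obstacle is already available: Proposition~\ref{expoapp} (built on the coupling Lemma~\ref{le1025-1}) gives the exponential equivalence of $Z[s,s+\kappa\cdot]$ to an $s$--independent process $(Z_t)$ \emph{uniformly in $s$}, so the one--point MDP tail of Lemma~\ref{MDPocc} transfers uniformly without any direct exponential--martingale computation on $\cZ[m,n]$. Your martingale route would also work, but the paper's coupling is shorter because it decouples the uniformity question from the large--deviation computation. Second, your first--moment integral is only justified for $s=o(t)$, and you do not say how to exclude vertices born at times $s$ of order $t$ (where $\kappa=t-s$ is no longer $\sim t$ and the MDP scaling breaks down). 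The paper handles this range separately via Proposition~\ref{p:A_s}, which shows by a direct concentration bound that no vertex with $s\ge c\,a_\kappa$ can satisfy $Z[s,\kappa]\ge\kappa$; this is the missing piece in your outline.
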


\begin{remark}
In terms of the natural scaling, we get for the index $m^*_n$ of the hub and the maximal indegree $\cZ^{\max}_n$ at natural time $n\iN$ 
that, in probability,
$$
\log m_n^*\sim \frac12 \frac{1-\alpha}{1-2\alpha}\frac{(\log n)^{\frac{1-2\alpha}{1-\alpha}}}{\bar \ell(\log n)}
$$
and
$$
\cZ^{\max}_n- \Phi^{-1}(\Psi( n))\sim \frac12 \frac{1-\alpha}{1-2\alpha} \log n.
$$
\end{remark}

\medskip

The remainder of this paper is devoted to the proofs of these results. Rather than proving
the results in the order in which they are stated, we proceed by the techniques
used. Section~\ref{sec3} is devoted to martingale techniques, which in particular prove the
law of large numbers, Proposition~\ref{LLN}, and the central limit theorem, Proposition~\ref{CLT}.
We also prove a property of the martingale limit 
which is crucial in the proof of Theorem~\ref{main}. Section~\ref{sec4} is using Markov chain techniques and provides the proof of Theorem~\ref{EDD}.
In Section~\ref{sec5} we collect the large deviation techniques, proving Theorem~\ref{LDP} and
Theorem~\ref{MDP}. Section~\ref{sec6} combines the various techniques to prove our main result,
Theorem~\ref{main}, along with Theorem~\ref{WL}. An appendix collects the auxiliary statements from
the theory of regular variation.
\bigskip

\section{Martingale techniques}\label{sec3}

In this section we identify a martingale associated with the degree evolution
of a vertex, and study its properties. This will be a vital tool in the further
analysis of the network.

\subsection{Martingale convergence}\label{sec2.1}

\begin{lemma}\label{degs}
Fix $s\in\IT$ and represent $Z[s,\cdot\,]$ as
$$
Z[s,t]= t-s +M_t.
$$
Then $(M_{t})_{t\in \IT, t\geq s}$ is a martingale. Moreover,
the martingale converges if and only if
\begin{align}\label{eq0727-1}
\int_0^\infty \frac1{f(u)^2}\,du<\infty,
\end{align}
and otherwise  it satisfies the following functional
central limit theorem: Let
$$\vphi_{t}:=\int_0^{\Phi^{-1}(t)} \frac {1}{f(v)^2}\,dv=\int_0^t \frac1{\bar f(v)}\, dv,$$
and denote by $\vphi^*\colon[0,\infty)\to[0,\infty)$ the inverse of $(\vphi_t)$; then the martingales $$M^\kappa:=\left(\frac1{\sqrt \kappa} M_{s+\vphi^*_{\kappa t}}\right)_{t\ge 0}\qquad \text{ for }\kappa>0 $$
converge in distribution to standard Brownian motion as $\kappa$ tends to infinity.
In any case the processes $(\frac1\kappa\, Z[s,s+\kappa t])_{t\ge 0}$ converge, as~$\kappa\uparrow\infty$,
almost surely, in ${\mathcal L}[0,\infty)$ to the identity.
\end{lemma}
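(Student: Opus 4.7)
I would first identify the martingale structure directly from the dynamics. In the original time step $n\to n+1$, the indegree $\cZ[m,n]$ jumps by $1$ with conditional probability $f(\cZ[m,n])/n$ and remains constant otherwise; on the jump event, $Z[s,\cdot]=\Phi(\cZ[m,\cdot])$ increases by the slope $1/f(\cZ[m,n])$ of $\Phi$ on the current unit interval. Hence the conditional expectation of the artificial-degree increment given the network history is $(f(\cZ[m,n])/n)\cdot(1/f(\cZ[m,n]))=1/n=\Psi(n+1)-\Psi(n)$, which matches the artificial-time increment. This shows that $M_t=Z[s,t]-(t-s)$ is a martingale on $\IT\cap[s,\infty)$, with increments uniformly bounded by $1/f(0)$.

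For the convergence dichotomy I would analyse the predictable quadratic variation. The conditional variance of the $n$th increment equals
$$\frac{1}{f(\cZ[m,n])^2}\cdot\frac{f(\cZ[m,n])}{n}\Bigl(1-\frac{f(\cZ[m,n])}{n}\Bigr)=\frac{1}{n\,f(\cZ[m,n])}\bigl(1+o(1)\bigr),$$
since $f(\cZ[m,n])/n\to 0$ by Proposition~\ref{LLN}. Summing and again invoking Proposition~\ref{LLN} -- together with the monotonicity of $f$ and $\Phi^{-1}$ to sandwich $f(\cZ[m,k])$ between constant multiples of $\bar f(\Psi(k))$ for all sufficiently large $k$ almost surely -- a Riemann sum comparison yields, almost surely,
$$\langle M\rangle_t\;\longrightarrow\;\varphi_\infty-\varphi_s\quad\text{as }t\to\infty,$$
where $\varphi_\infty=\infty$ precisely when $\int_0^\infty du/\bar f(u)$ diverges; the substitution $u=\Phi(v)$ shows this is equivalent to $\int_0^\infty f(v)^{-2}\,dv=\infty$. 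The classical dichotomy for martingales with bounded differences (a.s.\ convergence on $\{\langle M\rangle_\infty<\infty\}$, oscillation on the complement) then delivers the equivalence between convergence of $M$ and \eqref{eq0727-1}. The most delicate point is upgrading Proposition~\ref{LLN} from a mere ratio statement to the almost-sure sandwich required here; this is the main technical obstacle, and I would handle it by exploiting the monotonicity of $\cZ[m,\cdot]$ together with a tail estimate.

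In the weak preference case the functional central limit theorem for $M^\kappa$ follows from the standard martingale CLT (e.g.\ Hall--Heyde, Theorem~3.2), once two hypotheses are verified: (i) that $\langle M^\kappa\rangle_t=\kappa^{-1}\langle M\rangle_{s+\varphi^*_{\kappa t}}\to t$ almost surely, which is immediate from the asymptotic $\langle M\rangle_u\sim\varphi_u-\varphi_s$ established above and the identity $\varphi_{s+\varphi^*_{\kappa t}}-\varphi_s=\kappa t+O(1)$ as $\kappa\uparrow\infty$; and (ii) the Lindeberg/negligibility condition, which is trivial because every jump of $M^\kappa$ is of size at most $1/(f(0)\sqrt{\kappa})\to 0$. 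Tightness on $\cL[0,\infty)$ then follows from the uniform control on the quadratic variation. Finally, the almost-sure convergence $\kappa^{-1}Z[s,s+\kappa t]\to t$ in $\cL[0,\infty)$ is a direct consequence of Proposition~\ref{LLN} (setting $t'=s+\kappa t\to\infty$ gives pointwise convergence), upgraded to locally uniform convergence by a standard Dini-type monotonicity argument using the monotonicity of $t\mapsto Z[s,s+\kappa t]$ and the continuity of the identity limit.
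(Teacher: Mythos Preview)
Your argument is circular. Proposition~\ref{LLN} is not an independent input: in the paper's logical structure it is precisely the last assertion of Lemma~\ref{degs} (the almost-sure convergence of $\kappa^{-1}Z[s,s+\kappa t]$ to the identity, specialised to $t=1$). You invoke it three times---to get $f(\cZ[m,n])/n\to 0$, to sandwich $f(\cZ[m,k])$ by multiples of $\bar f(\Psi(k))$ in the Riemann-sum comparison, and finally, most blatantly, to deduce the uniform convergence statement itself---but each of these uses presupposes the very conclusion the lemma is meant to establish.

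The paper breaks this circle by reversing the order of the two estimates. It first uses only the crude deterministic bound $\langle M\rangle_{t+\Delta t}-\langle M\rangle_t\le \frac{1}{f(0)}\,\Delta t$ (no information about $\cZ[m,n]$ needed), together with Doob's $L^2$ inequality and a dyadic Borel--Cantelli argument, to obtain the a~priori growth bound $\limsup_{t\to\infty}|M_t|/(\sqrt{t-s}\log(t-s))\le 1$ almost surely. This immediately yields $Z[s,s+\kappa t]/\kappa\to t$ (and hence Proposition~\ref{LLN}); only then is this law of large numbers fed back into the quadratic-variation computation to produce the refined asymptotic $\langle M\rangle_t\sim\vphi_{t-s}$. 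Once that is in hand, your treatment of the bounded-difference dichotomy and of the martingale functional CLT is essentially the paper's. The step you flag as ``the main technical obstacle'' is therefore not a matter of \emph{upgrading} Proposition~\ref{LLN}, but of \emph{proving} it from scratch, and the missing ingredient is precisely the crude $L^2$ martingale bound that requires no knowledge of where $\cZ[m,n]$ actually lives.
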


\begin{proof}
For $t=\Psi(n)\in\IT$ we denote by $\Delta t$ the distance between $t$ and its right neighbour in $\IT$, i.e.
\begin{align}\label{delta}
\Delta t= \frac1n =\frac{1}{\Psi^{-1}(t)}.
\end{align}
One has
\begin{align*}
\IE \big[ Z[s,t+\Delta t]-Z[s,t]\,\big|\,\cG_n\big] & =
\IE \big[ \Phi\circ\cZ[i,n+1]-\Phi\circ\cZ[i,n]\,\big|\,\cG_n\big]\notag\\[2mm] & =
\frac{f(\cZ[i,n])}{n} \times \frac{1}{f(\cZ[i,n])}
= \frac1{n}.
\end{align*}
Moreover,
\begin{align}\label{eq0726-2}
\langle M\rangle_{t+\Delta t} -\langle M\rangle_{t} =  \Bigl(1-  \bar f(Z[s,t]) \Delta t\Bigr) \frac{ 1}{\bar f(Z[s,t])}   \Delta t\leq \frac{ 1}{\bar f(Z[s,t])} \Delta t.
\end{align}

Observe that by Doob's martingale inequality and the uniform boundedness of $\bar f(\cdot)^{-1}$ one has
$$
a_i:= \frac{\IE[ \sup_{s\leq t \leq s+2^{i+1}} |M_t|^2]}{\bigl(2^{i/2} \log 2^i\bigr)^2}\leq C\, \frac1{i^2},
$$
where $C=C(f(0))$ is a constant only depending on $f(0)$. Moreover, by Chebyshev, one has
\begin{align*}\begin{split}
\IP & \Bigl(\sup_{t\geq s+2^i} \frac{|M_t|}{\sqrt{t-s}
\log(t-s)}\ge1\Bigr) \leq \sum_{k=i}^\infty \IP \Bigl(\sup_{s+2^k
\le t\le s+2^{k+1}}
\frac{M_t^2}{(t-s) \log^2(t-s)}\ge1\Bigr) \\
& \leq \sum_{k=i}^\infty \IE \Bigl[\sup_{s+2^k \le t\le s+2^{k+1}}
\frac{M_t^2}{(t-s) \log^2(t-s)}\Bigr]
\leq \sum_{k=i}^\infty a_k <\infty.
\end{split}\end{align*}
Letting $i$ tend to infinity, we conclude that almost surely
\begin{align}\label{eq0727-3}
\limsup_{t\to\infty}\frac{|M_t|}{\sqrt{t-s} \log(t-s)}\leq1.
\end{align}
In particular, we obtain almost sure convergence of $(\frac1\kappa Z[s,s+\kappa t])_{t\geq 0}$ to the identity. 
As a consequence of (\ref{eq0726-2}), for any $\eps>0$, there exists a random almost surely finite constant $\eta=\eta(\om,\eps)$ such that,
for all~$t\geq s$,
$$\langle M\rangle_t\leq  \int_0^{t-s}  \frac1{f(\Phi^{-1}((1-\eps) u))}\,du +\eta.$$
Note that $\Phi:[0,\infty)\to[0,\infty)$ is bijective and substituting $(1-\eps)\kappa u$ by $\Phi(v)$ leads to
$$
\langle M\rangle_t\leq  \frac{1}{1-\eps} \int_0^{\Phi^{-1}((1-\eps)(t-s))}   \frac1{f(v)^2}\,dv+\eta\leq  \frac{1}{1-\eps} \int_0^{\Phi^{-1}(t-s)}   \frac1{f(v)^2}\,dv+\eta.
$$
Thus,  condition (\ref{eq0727-1}) implies convergence of the martingale $(M_t)$.

We now assume that $(\vphi_t)_{t\geq0}$ converges to infinity. Since $\eps>0$ was arbitrary the above estimate implies that
$$
\limsup_{t\to\infty} \frac{\langle M \rangle_t}{\vphi_{t-s}}\leq 1,  \text{ almost surely.}
$$
To conclude the converse estimate note that $\sum_{t\in\IT} (\Delta t)^2 <\infty$ so that we get with
 (\ref{eq0726-2}) and (\ref{eq0727-3}) that
$$
\langle M\rangle_t\geq  \int_0^{t-s}  \frac1{f(\Phi^{-1}((1+\eps) u))}\,du -\eta\geq \frac{1}{1+\eps} \int_0^{\Phi^{-1}(t-s)}   \frac1{f(v)^2}\,dv-\eta,
$$
for an appropriate finite random variable $\eta$. Therefore,
\begin{align}\label{eq0827-1}
\lim_{t\to\infty} \frac{\langle M\rangle_t}{\vphi_{t-s}}=1 \qquad\mbox{almost surely.}
\end{align}
The jumps of $M^\kappa$ are uniformly bounded by a deterministic value that tends to zero as $\kappa$ tends to $\infty$. By a functional central limit theorem for martingales (see, e.g., Theorem 3.11 in \cite{JacShi03}),
the central limit theorem follows once we establish that, for any $t\geq0$,
$$\lim_{\kappa\to\infty} \langle M^\kappa\rangle _t =t \quad \mbox{ in probability},$$
which is an immediate consequence of (\ref{eq0827-1}).
\end{proof}

\label{CLT2proof}
\begin{proof}[ of Remark~\ref{CLT2}]
We suppose that $f$ is regularly varying with index
$\alpha<\frac12$.   By the central limit theorem
the processes
$$ (Y_t^\kappa:t\geq 0):= \Bigl(\frac{Z[s,s+\vphi^*_{t \vphi_\kappa}]-\vphi^*_{t \vphi_\kappa}}{\sqrt{\vphi(\kappa)}}:t\geq 0\Bigr)\qquad
\mbox{ for }\kappa>0$$
converge in distribution to the Wiener process $(W_t)$ as $\kappa$ tends to infinity.
For each $\kappa>0$ we consider the time change $(\tau^\kappa_t)_{t\ge 0}:=(\vphi_{\kappa t}/\vphi_\kappa)$. Using that $\vphi$ is regularly varying with parameter $\frac{1-2\alpha}{1-\alpha}$ we find uniform convergence on compacts:
$$
(\tau^\kappa_t) \to  (t^{\frac{1-2\alpha}{1-\alpha}}) =:(\tau_t)\qquad \text{as }\kappa\to\infty.
$$
Therefore,
$$
\Bigl(\frac{Z[s,s+\kappa t]-\kappa t}{\sqrt{\vphi(\kappa)}}:t\geq 0\Bigr)= ( Y_{\tau_t^\kappa}^\kappa:t\geq 0) \Rightarrow (W_{\tau_t}:t\geq0).
$$
and, as shown in Lemma \ref{reglema}, $\vphi_\kappa\sim\frac{1-\alpha}{1-2\alpha} \kappa^{\frac{1-2\alpha}{1-\alpha}} \bar \ell(\kappa)^{-1}$.
\end{proof}

\subsection{Absolute continuity of the law of $M_\infty$}

In the sequel, we consider the martingale $(M_t)_{t\ge s, t\in\IT}$ given by $Z[s,t]-(t-s)$ for a fixed $s\in\IT$ in the case of \emph{strong} preference. We denote by $M_\infty$ the limit of the martingale.

\begin{prop}\label{prop_ac}
If $f$ is concave, then the distribution of $M_\infty$ is absolutely continuous with respect to Lebesgue measure.
\end{prop}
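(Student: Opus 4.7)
The plan is to exploit the Markov jump structure of the degree process and to compare with a continuous-time pure-birth embedding where absolute continuity is transparent.

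First, I would represent the limit as $M_\infty - M_s = \sum_{k \ge k_0}[1/f(k) - \tau_k]$, where $k_0$ is the degree at time $s$, $\tau_k := \Psi(N_{k+1}) - \Psi(N_k)$ is the artificial time spent at degree $k$, and $(N_k)_{k \ge k_0}$ is the Markov chain of natural times at which the degree increases. Given $N_k = n$, the waiting time $\tau_k$ is discrete, with $\IP(N_{k+1} > j \mid N_k = n) = \prod_{i=n}^{j-1}(1 - f(k)/i)$, so that $f(k)\tau_k$ converges in distribution to $\mathrm{Exp}(1)$ as $n \to \infty$.

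Second, I would pass to the continuous-time analogue: if each discrete $\tau_k$ is replaced by $E_k/f(k)$ with independent $E_k \sim \mathrm{Exp}(1)$, the idealised sum $\tilde M := \sum_{k \ge k_0}(1 - E_k)/f(k)$ converges in $L^2$ because $\sum 1/f(k)^2 < \infty$, and since the summands are independent and absolutely continuous, $\tilde M$ has a Lebesgue density (any one summand's density propagates through convolution).

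Third, I would transfer absolute continuity to $M_\infty$ by a characteristic-function argument. Setting $\phi_k(\xi; n) := \IE[\exp(i\xi(M_\infty - M_{\Psi(n)})) \mid N_k = n]$ and unfolding the Markov recursion yields $\phi_k(\xi; n) = \IE[e^{i\xi(1/f(k) - \tau_k)} \phi_{k+1}(\xi; N_{k+1}) \mid N_k = n]$, which iterates to a product-type representation. The concavity of $f$ enters essentially here: together with $f(k) \le k+1$ it keeps the Bernoulli probabilities $f(k)/n$ uniformly bounded away from $1$ once $n$ exceeds $k+1$, so each one-step factor $|\IE[e^{-i\xi \tau_k} \mid N_k = n]|$ admits a decay estimate of the form $(1 + c|\xi|/f(k))^{-1}$ with $c > 0$ independent of $n$. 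Multiplying over $k$ and using $\sum 1/f(k)^2 < \infty$ yields enough decay of $\phi$ to conclude that $M_\infty$ has a density.

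The main obstacle is the uniform characteristic-function bound in the third step: one must show that the discrete, geometrically distributed summands exhibit the same $L^1$-decay as their continuous exponential counterparts, uniformly in the starting natural time $n$. An alternative, perhaps cleaner route would be a direct coupling that expresses $M_\infty$ as $\tilde M + R$ with $\tilde M$ as in the second step and the error $R$ either absolutely continuous in its own right or independent of a continuous summand of $\tilde M$; in either approach the concavity of $f$ supplies the quantitative control that makes the strategy work.
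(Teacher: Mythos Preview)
Your third step contains a genuine gap that cannot be repaired along the lines you suggest. Given $N_k=n$, the waiting time $\tau_k$ is a \emph{discrete} random variable, supported on the countable set $\{\Psi(j)-\Psi(n):j>n\}$. The characteristic function of any discrete law is Bohr almost periodic, so $\limsup_{|\xi|\to\infty}|\IE[e^{-i\xi\tau_k}\mid N_k=n]|>0$; in particular the claimed bound $(1+c|\xi|/f(k))^{-1}$ is false for large~$\xi$. The approximation by an exponential is only good for $|\xi|\ll n$, and you need control for all $\xi$. Moreover, the recursion you write down does not factor into a product of one-step terms, because $\phi_{k+1}(\xi;N_{k+1})$ depends on $N_{k+1}$, which is correlated with $\tau_k$; the $\tau_k$ are not independent. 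Your alternative coupling $M_\infty=\tilde M+R$ is not enough either: absolute continuity of $\tilde M$ transfers to the sum only if $R$ is independent of $\tilde M$ (or has its own density), and neither is available from the exponential approximation used in the paper. Finally, your stated role for concavity---keeping $f(k)/n$ away from $1$---follows already from $f(k)\le k+1$ and does not use concavity at all.

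The paper's argument is entirely different and avoids Fourier analysis. It works directly with the discrete process $Y_t=Z[s,t]$ restricted to the event $A_t=\{Y_u\in[u-c,u+c]\text{ for all }u\le t\}$, and bounds the maximal point mass $h(t)=\max_{v\in\IS}\IP(Y_t=v;A_t)$. The one-step recursion \[\IP(Y_{t+\Delta t}=v;A_t)=(1-\bar f(v)\Delta t)\,\IP(Y_t=v;A_t)+\bar f(v_-)\Delta t\,\IP(Y_t=v_-;A_t)\] gives $h(t+\Delta t)\le(1-\Delta\bar f(v)\Delta t)\,h(t)$ for the relevant $v$, and \emph{this} is where concavity enters: it makes $v\mapsto\Delta\bar f(v)$ monotone, so one can replace $\Delta\bar f(v)$ by its value at the top of the window and then compare with the linear interpolation of $f$ to obtain $h(t)\le c^*/\bar f(t+c+1)$. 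Since the number of points of $\IS$ in an interval $t-s+I$ of length $|I|$ inside the window is at most $|I|\,\bar f(t+c)+1$, one gets $\IP(M_t\in I;A_t)\le c^*|I|+o(1)$, hence $\IP(M_\infty\in I;A_\infty)\le c^*|I|$, and letting $c\uparrow\infty$ yields absolute continuity. This mass-spreading argument is the idea you are missing.
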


\begin{proof}
For ease of notation, we denote $Y_t=Z[s,t]$, for $t\in\IT$, $t\geq s$. 
Moreover, we fix $c>0$ and let $A_t$ denote the event that $Y_u\in [u-c,u+c]$ for all $u\in[s,t]\cap \IT$.

Now observe that for two neighbours $v_-$ and $v$ in $\IS$
\begin{align}\label{eq2401-1}
\IP( Y_{t+\Delta t}=v; A_t)=  (1-\bar f(v)\,\Delta t) \,\IP( Y_{t}=v; A_t) + \bar f(v_-) \, \Delta t \, \IP( Y_{t}=v_-; A_t).
\end{align}
Again we use the notation $\Delta t=\frac1{\Psi^{-1}(t)}$. Moreover, we denote $\Delta \bar f(v)=\bar f(v)- \bar f(v_-)$. In the first step of the proof we derive an upper bound for
$$
h(t)=\max_{v\in\IS} \IP(Y_t=v;A_t)\qquad \mbox{ for }t\in\IT, t\geq s.
$$
With (\ref{eq2401-1}) we conclude that
$$
\IP( Y_{t+\Delta t}=v; A_t) \leq (1- \Delta \bar f(v)\,\Delta t) \, h(t).
$$
For $w\geq 0$ we denote $\varsigma(w)=\max \IS\cap [0,w]$. Due to the concavity of $f$, we get that
$$
h(t+\Delta t)\leq (1-\Delta \bar f(\varsigma(t+c+1))\,\Delta t)\, h(t).
$$
Consequently,
$$
h(t)\leq \prod_{u\in [s,t)\cap \IT} (1-\Delta\bar f(\varsigma(u+c+1)) \,\Delta u)
$$
and using that $\log (1+x)\leq x$ we obtain
\begin{align}\label{2401-2}
h(t)\leq \exp\Bigl( -\sum_{u\in [s,t)\cap \IT} \Delta\bar f(\varsigma(u+c+1)) \,\Delta u\Bigr).
\end{align}
We continue with estimating the sum $\Sigma$ in the latter exponential:
\begin{align*}
\Sigma= \sum_{u\in [s,t)\cap \IT} \Delta\bar f(\varsigma(u+c+1)) \,\Delta u\geq \int_s^t \Delta\bar f(\varsigma(u+c+1)) \,du.
\end{align*}
Next, we denote by $f^\mathrm{lin}$ the continuous piecewise linear interpolation of $f|_{\IN_0}$. Analogously, we set $\Phi^\mathrm{lin}(v)=\int_0^v \frac 1{f^\mathrm{lin}(u)}\, du$ and $\bar f^\mathrm{lin}(v)=f^\mathrm{lin}\circ (\Phi^\mathrm{lin})^{-1}(v)$. Using again the concavity of $f$ we conclude that
$$
\int_s^t \Delta\bar f(\varsigma(u+c+1)) \,du \geq  \int_s^t (f^\mathrm{lin})'(\Phi^{-1}(u+c+1)) \,du,
$$
and that
$$
f^\mathrm{lin}\geq f \ \Rightarrow\ \Phi^\mathrm{lin}\leq \Phi\ \Rightarrow\ (\Phi^\mathrm{lin})^{-1}\geq \Phi^{-1}
\ \Rightarrow \ (f^\mathrm{lin})'\circ (\Phi^\mathrm{lin})^{-1} \leq (f^\mathrm{lin})'\circ \Phi^{-1}.
$$
Hence,
$$
\Sigma\geq \int_s^t (f^\mathrm{lin})'(\Phi^{-1}(u+c+1)) \,du \geq \int_s^t (f^\mathrm{lin})'\circ (\Phi^\mathrm{lin})^{-1}(u+c+1) \,du.
$$
For Lebesgue almost all arguments one has
$$(\bar f^\mathrm{lin})'=(f^\mathrm{lin}\circ (\Phi^\mathrm{lin})^{-1})'= (f^\mathrm{lin})'\circ (\Phi^\mathrm{lin})^{-1} \cdot ((\Phi^\mathrm{lin})^{-1})'=(f^\mathrm{lin})'\circ (\Phi^\mathrm{lin})^{-1} \cdot f^\mathrm{lin}  \circ (\Phi^\mathrm{lin})^{-1}
$$
so that
$$
(f^\mathrm{lin})'\circ (\Phi^\mathrm{lin})^{-1}= \frac{(\bar f^\mathrm{lin})'}
{\bar f^\mathrm{lin}}= (\log   \bar f^\mathrm{lin})'.
$$
Consequently,
$$
\Sigma\geq \log   \bar f^\mathrm{lin}(t+c+1)  -\log   \bar f^\mathrm{lin}(s+c+1)
$$
Using that $f^\mathrm{lin}\geq f$ and $(\Phi^\mathrm{lin})^{-1}\geq \Phi^{-1}$ we finally get that
$$
\Sigma \geq \log   \bar f(t+c+1) -\log c^*,
$$
where $c^*$ is a positive constant not depending on $t$.
Plugging this estimate into (\ref{eq2401-1}) we get
$$
h(t)\leq \frac {c^*}{\bar f(t+c+1)}.
$$

Fix now an interval $I\subset \IR$ of finite length and note that
$$
\IP(M_t\in I;A_t)= \IP(Y_t\in t-s+I;A_t) \leq \# [(t-s+I)\cap\IS\cap A_t]  \cdot h(t).
$$
Now $(t-s+I)\cap\IS\cap A_t$ is a subset of $[t-c,t+c]$ and the minimal distance of two distinct elements is bigger than $\frac 1{\bar f(t+c)}$. Therefore, $\# [(t-s+I)\cap\IS\cap A_t]\leq |I| \,\bar f(t+c)+1$, and
$$
\IP(M_t\in I;A_t)\leq  c^*\,|I| +\frac{c^*}{\bar f(t+c)}.
$$
Moreover, for any open and thus immediately also for any arbitrary interval $I$ one has
$$
\IP(M_\infty\in I;A_\infty)\leq \liminf_{t\to\infty} \IP(M_t\in I;A_t)\leq c^* \, |I|,
$$
where $A_\infty=\bigcap_{t\in[s,\infty)\cap \IT} A_t$.
Consequently, the Borel measure $\mu_c$ on $\IR$ given by $\mu_c(E) = \IE[\ind_{A_\infty} \cdot \ind_{E}(M_\infty)]$,   is absolutely continuous with respect to Lebesgue measure. The distribution $\mu$ of $M_\infty$, i.e.\ $\mu(E) = \IE[\ind_E(M_\infty)]$, can be written as monotone limit of the absolutely continuous measures $\mu_c$ $(c\iN)$, and it is thus also absolutely continuous.
\end{proof}

\section{The empirical indegree distribution}\label{sec4}

In this section we prove Theorem~\ref{EDD}. For $k\iZ_+$ and $n\iN$ let  $\mu_k(n)=\IE [X_k(n)]$ and $\mu(n)=(\mu_k(n))_{k\iZ_+}$. We first
show that $(\mu(n))_{n\iN}$ converges to $\mu=(\mu_k)_{k\iZ_+}$
as $n$ tends to infinity. We start by deriving a recursive representation for $\mu(n)$. For $k\iZ_+$,
\begin{align*}
\IE [X_k(n+1)|X(n)]&= \frac 1{n+1}  \Bigl( \sum_{i=1}^n \IE\Bigl[-\ind_{\{\cZ[i,n]=k<\cZ[i,n+1]\}}+\ind_{\{\cZ[i,n]<k=\cZ[i,n+1]\}}\Big|X(n) \Bigr]\\
& \hspace{4cm}  +n X_k(n)+  \ind_{\{k=0\}}\Bigr)\\
& =X_k(n)+\frac1{n+1} \Bigl[-n X_k(n) \frac{f(k)}n +n X_{k-1}(n) \frac{f(k-1)}n -X_k(n)+\ind_{\{k=0\}}\Bigr].
\end{align*}
Thus the linearity and the tower property of conditional expectation gives
$$\mu_k(n+1) = \mu_k(n) +\frac1{n+1} (f(k-1)\mu_{k-1}(n) - (1+f(k)) \mu_k(t)+ \ind_{\{k=0\}}).$$
Now defining $Q\in \IR^{\IN\times \IN}$ as
\begin{align}\label{defQ}Q=\begin{pmatrix}
-f(0)& f(0) \\
1&-(f(1)+1)&f(1)\\
1& &-(f(2)+1)&f(2)\\
\vdots&&& \ddots& \ddots
\end{pmatrix}
\end{align}
and conceiving $\mu(n)$ as a row vector we can rewrite the recursive equation as
$$\mu(n+1)= \mu(n) \Bigl(I+ \frac1{n+1}\, Q \Bigr),$$
where $I=(\delta_{i,j})_{i,j\iN}$ denotes the unit matrix. Next we show that $\mu$ is a probability distribution with $\mu Q=0$. By induction, we get that
$$
1-\sum_{l=0}^k \mu_l=\prod_{l=0}^k \frac{f(l)}{1+f(l)}
$$
for any $k\iZ_+$. Since $\sum_{l=0^\infty} 1/f(l)\geq \sum_{l=0}^\infty 1/(l+1)=\infty$ it follows that $\mu$ is a probability measure on $\IZ_+$.
Moreover, it is straight-forward to verify that
$$
f(0)\mu_0=1-\mu_0=\sum_{l=1}^\infty \mu_l
$$
and that for all $k\iZ_+$
$$
f(k-1)\,\mu_{k-1}=(1+f(k))\, \mu_k,
$$
hence $\mu Q=0$.

Now we use the matrices $P^{(n)}:= I+ \frac1{n+1}\,Q $ to define an inhomogeneous Markov process. The entries of each row of $P^{(n)}$ sum up to $1$ but  (as long as $f$ is not bounded) each $P^{(n)}$ contains negative entries. Nonetheless one can use the $P^{(n)}$ as a time inhomogeneous Markov kernel as long as at the starting time $m\iN$ the starting state $l\in \IZ_+$ satisfies $l\leq m-1$.

We denote for any admissible pair $l,m$ by $(Y^{l,m}_n)_{n\geq m}$ a Markov chain starting at time $m$ in state $l$ having transition kernels $(P^{(n)})_{n\geq m}$. Due to the recursive equation we now have
$$\mu_k(n)= \IP(Y^{0,1}_n=k).$$
Next, fix $k\iZ_+$, let  $m>k$ arbitrary, and denote by $\nu$ the  restriction of $\mu$ to the set $\{m,m+1,\dots\}$. Since $\mu$ is invariant under each $P^{(n)}$ we get
$$
\mu_k=(\mu P^{(m)}\dots P^{(n)})_k= \sum_{l=0}^{m-1} \mu_l \,\IP(Y^{l,m}_n=k) + (\nu P^{(m)}\dots P^{(n)})_k.
$$

Note that in the $n$-th  step of the Markov chain, the probability to jump to state zero is $\frac1{n+1}$ for all original states in $\{1,\dots,n-1\}$ and bigger than $\frac1{n+1}$ for the original state $0$.
Thus one can couple the Markov chains $(Y^{l,m}_n)$ and $(Y^{0,1}_n)$ in such a way that
$$
\IP(Y^{l,m}_{n+1}=Y^{0,1}_{n+1}=0 \,| \, Y^{l,m}_{n}\not =Y^{0,1}_{n})=   \sfrac1{n+1},
$$
and that once the processes meet at one site they stay together.
Then
$$
\IP(Y^{l,m}_n= Y^{0,1}_n)\geq 1- \prod_{i=m}^{n-1} \frac i{i+1} \longrightarrow 1.
$$
Thus $(\nu P^{(m)}\dots P^{(n)})_k\in[0,\mu([m,\infty))]$ implies that
$$
\limsup_{n\to\infty} \Bigl|\mu_k- \IP(Y_n^{(0,1)}=k)  \sum_{l=0}^{m-1} \mu_l^*\Bigr|\leq \mu([m,\infty)).
$$
As $m\to\infty$ we thus get that
$$
\lim_{n\to\infty} \mu_k(n)= \mu_k.
$$
\medskip

In the next step we show that the sequence of the empirical indegree distributions $(X(n))_{n\iN}$ converges almost surely to $\mu$. Note that $n\,X_k(n)$ is a sum of $n$ independent Bernoulli random variables. Thus Chernoff's inequality (\cite{Che81})  implies that  for any $t>0$
$$
\IP\bigl(n\,X_k(n)\leq n\,(\IE[X_k(n)]- t)\bigr) \leq e^{-n t^2/(2\IE[X_k(n)])}= e^{-n t^2/(2\mu_k(n))}.
$$
Since
$$
\sum_{n=1}^\infty e^{-n t^2/(2\mu_k(n))}<\infty,
$$
Borel-Cantelli implies that almost surely $\liminf_{n\to\infty} X_k(n)\geq \mu_k$ for all $k\iZ_+$. This establishes almost sure convergence of $(X(n))$ to $\mu$.

\medskip

We still need to show that the conditional law of the outdegree of a new node converges almost surely in the weak topology to a Poisson distribution.
In the first step we will prove that, for $\eta\in(0,1)$, and  the  affine linear attachment rule $f(k)=\eta k+1$, one has almost sure convergence of $Y_n:=\frac 1n \sum_{m=1}^n \cZ[m,n]=\langle X(n),\mathrm{id}\rangle$ to $y:=1/(1-\eta)$.
First observe that
\begin{align*}
Y_{n+1}=\sfrac1{n+1} \bigl[ n Y_n + \sum_{m=1}^n \Delta\cZ[m,n]\bigr]= Y_n+\sfrac1{n+1} \bigl[ - Y_n + \sum_{m=1}^n \Delta\cZ[m,n]\bigr],
\end{align*}
where $\Delta\cZ[m,n]:=\cZ[m,n+1]-\cZ[m,n]$. Given the past $\cF_n$ of the network formation, each $\Delta\cZ[m,n]$ is independent Bernoulli distributed with success probability $\frac1n (\eta \cZ[m,n]+1)$. Consequently,
\begin{align*}
\IE[ Y_{n+1} |\cF_n] & = Y_n+\sfrac1{n+1} \bigl[ - Y_n + \sum_{m=1}^n \sfrac1n (\eta \cZ[m,n]+1)\bigr]\\
& =Y_n+\sfrac1{n+1} \bigl[ -(1-\eta) Y_n + 1\bigr],
\end{align*}
and
\begin{align}\label{eq1701-1}
\langle Y\rangle_{n+1} - \langle Y\rangle_n  \le  \sfrac1{(n+1)^2} \sum_{m=1}^n  \sfrac1n (\eta \cZ[m,n]+1)= \sfrac1{(n+1)^2} [\eta Y_n +1].
\end{align}
Now note that due to Theorem \ref{main} (which can be used here, as it will be proved independently of this section) there is a single node that has maximal indegree for all but finitely many times. Let $m^*$ denote the random node with this property. With Remark \ref{ReLLN} we conclude that almost surely
\begin{align}\label{eq1701-2}
\log \cZ[m^*,n]\sim \log n^{\eta}.
\end{align}
Since for sufficiently large $n$
$$
Y_n=\frac 1n \sum_{m=1}^n \cZ[m,n] \le \cZ[m^*,n],
$$
equations (\ref{eq1701-1}) and (\ref{eq1701-2}) imply  that $\langle Y\rangle_\cdot$ converges almost surely to a finite random variable.

Next, represent the increment $Y_{n+1}-Y_n$ as
\begin{align}\label{eq1701-3} Y_{n+1}-Y_n =\sfrac1{n+1} \bigl[ -(1-\eta) Y_n + 1\bigr] + \Delta M_{n+1},
\end{align}
where $\Delta M_{n+1}$ denotes a martingale difference. We shall denote by $(M_n)_{n\iN}$ the corresponding
martingale, that is $M_n=\sum_{m=2}^{n}\Delta M_m$. Since $\langle Y\rangle_\cdot$ is convergent, the martingale  $(M_n)$ converges almost surely.
Next, we represent (\ref{eq1701-3}) in terms of $\bar Y_n=Y_n-y$ as the following inhomogeneous linear difference equation of first order:
$$
\bar Y_{n+1} = \Bigl(1-\sfrac{1-\eta}{n+1}\Bigr) \bar Y_n  +\Delta M_{n+1}.
$$
The corresponding starting value is  $\bar Y_1=Y_1-y=-y$, and we can represent its solution as
$$
\bar Y_n=-y \,h_n^1+ \sum_{m=2}^n  \Delta M_m \, h_n^m
$$
for
$$
h_n^m:= \begin{cases} 0 & \text{ if }n<m\\ \prod_{l=m+1}^n \Bigl(1-\frac{1-\eta}l\Bigr)& \text{ if }n\geq m. \end{cases}
$$
Setting $\Delta h_n^m =h_n^m-h_n^{m-1}$ we conclude with an  integration by parts argument that \begin{align}\begin{split}\label{eq1801-1}
\sum_{m=2}^n  \Delta M_m \, h_n^m&=\sum_{m=2}^n  \Delta M_m \Bigl(1-\sum_{k=m+1}^n \Delta h_n^k\Bigr)=M_n-\sum_{m=2}^n \sum_{k=m+1}^n  \Delta M_m\,\Delta h_n^k \\
&=M_n- \sum_{k=3}^n \sum_{m=2}^{k-1}  \Delta M_m\,\Delta h_n^k = M_n- \sum_{k=3}^n   M_{k-1}\,\Delta h_n^k.
\end{split}
\end{align}
Note that $h_n^m$ and $\Delta h_n^m$  tend to $0$ as $n$ tends to infinity  so that $\sum_{k=m+1}^n \Delta h_n^k=1- h_n^m$ tends 
to~$1$. With $M_\infty:=\lim_{n\to\infty} M_n$ and $\eps_m=\sup_{n\ge m} |M_n-M_\infty|$ we derive for $m\leq n$
\begin{align*}
\bigl|M_n & -\sum_{k=3}^m M_{k-1} \,\Delta h_n^k -\sum_{k=m+1}^n M_{k-1} \,\Delta h_n^k\bigr|\\
& \leq \underbrace{|M_n-M_\infty|}_{\to0} + \underbrace{\sum_{k=3}^m |M_{k-1}| \,\Delta h_n^k}_{\to0} +\underbrace{| \sum_{k=m+1}^n (M_\infty-M_{k-1})\,\Delta h_n^k|}_{\leq \eps_m} + \underbrace{\bigl(1-\sum_{k=m+1}^n \Delta h_n^k\bigr) |M_\infty|}_{\to 0}.
\end{align*}
Since  $\lim_{m\to\infty} \eps_m =0$, almost surely, we thus conclude with (\ref{eq1801-1}) that $\sum_{m=2}^n  \Delta M_m \, h_n^m$ tends to $0$. Consequently, $\lim_{n\to\infty} Y_n=y$, almost surely.
Next, we show that also $\langle \mu, \mathrm{id} \rangle=y$. Recall that $\mu$ is the unique invariant distribution satisfying $\mu\,Q=0$ (see (\ref{defQ}) for the definition of $Q$). This implies that for any $k\iN$
$$
f(k-1)\,\mu_{k-1}-(f(k)+1) \,\mu_{k}=0, \ \text{ or equivalently, } \ \mu_k=f(k-1)\,\mu_{k-1}-f(k) \,\mu_{k}
$$
Thus
$$
\langle \mu,\mathrm{id}\rangle =\sum_{k=1}^\infty k\,\mu_k=\sum_{k=1}^\infty k\, \bigl[f(k-1)\,\mu_{k-1}-f(k) \,\mu_{k}\bigr].
$$
One cannot split the sum into two sums since the individual sums are not summable. However, noticing that the individual term $f(k)\mu_k k\approx k^2 \mu_k$ tends to $0$, we can rearrange the summands to obtain
$$
\langle \mu,\mathrm{id}\rangle =f(0)\,\mu_0 +\sum_{k=1}^\infty  f(k) \,\mu_{k}=\langle \mu,f\rangle = \eta \langle \mu,\mathrm{id}\rangle+1 .
$$
This implies that $\langle \mu,\mathrm{id}\rangle=y$ and that for any $m\iN$
$$
\langle X(n),\ind_{[m,\infty)}\cdot \mathrm{id}\rangle =  \langle X(n), \mathrm{id}\rangle-\langle X(n),\ind_{[0,m)}\cdot \mathrm{id}\rangle \to  \langle \mu,\ind_{[m,\infty)}\cdot\mathrm{id}\rangle,\text{ almost surely.}
$$
\medskip

Now, we switch to  general attachment rules. We denote by $f$ an arbitrary  attachment rule that is dominated by an   affine attachment rule $f^\mathrm{a}$. The corresponding degree evolutions will be denoted by $(\cZ[m,n])$ and  $(\cZ^\mathrm{a}[m,n])$, respectively. Moreover, we denote by $\mu$ and $\mu^\mathrm{a}$ the limit distributions of the empirical indegree distributions. Since by assumption $f\leq f^\mathrm{a}$, one can couple both degree evolutions such that  $\cZ[m,n]\leq \cZ^\mathrm{a}[m,n]$ for all $n\geq m\geq 0$.
Now
$$
\langle X(n), f\rangle \leq \langle X(n), \ind_{[0,m)}\cdot f\rangle + \langle X^\mathrm{a}(n), \ind_{[m,\infty)}\cdot f^\mathrm{a}\rangle
$$
so that almost surely
$$
\limsup_{n\to\infty} \langle X(n), f\rangle \leq \langle \mu, \ind_{[0,m)}\cdot f\rangle + \langle \mu^\mathrm{a}, \ind_{[m,\infty)}\cdot f^\mathrm{a}\rangle.
$$
Since $m$ can be chosen arbitrarily large we conclude that
$$
\limsup_{n\to\infty} \langle X(n), f\rangle \leq \langle \mu,  f\rangle.
$$
The converse estimate is an immediate consequence of Fatou's lemma. Hence,
$$
\lim_{n\to\infty} \IE \Bigl[\sum_{m=1}^n \Delta \cZ[m,n]\Big|\cF_n\Bigr] =\langle \mu,  f\rangle.
$$
Since, conditional on $\cF_n$,  $\sum_{m=1}^n \Delta \cZ[m,n]$ is a sum of independent Bernoulli variables with success probabilities tending uniformly to $0$, we finally get that $\cL(\sum_{m=1}^n \Delta \cZ[m,n]|\cF_n)$ converges in the weak topology to a Poisson distribution with parameter $\langle \mu,  f\rangle$.

\section{Large deviations}\label{sec5}

In this section we derive tools to analyse rare events in the random network. We provide large and
moderate deviation principles for the temporal development of the indegree of a given vertex. This
will allow us to describe the indegree evolution of the node with maximal indegree in the case of
weak preferential attachment. The large and moderate deviation principles are based on an exponential
approximation to the indegree evolution processes, which we first discuss.

\subsection{Exponentially good approximation}

In order to analyze the large deviations of the process $Z[s,\,\cdot\,]$ (or $\cZ[m,\,\cdot,\,]$) 
we use an approximating process. We first do this on the level of occupation measures. For $s\in \IT$ and $0\leq u <v$ we define
$$
T_s[u,v) = \sup\{t'-t: Z[s,t]\geq u, Z[s,t']<v, t,t'\in\IT\}
$$
to be the time the process $Z[s,\cdot]$ spends in the interval $[u,v)$. Similarly, we denote by $T_s[u]$ the time spent in $u$.
Moreover, we denote by $(T[u])_{u\in \IS}$ a family of independent random variables with each entry $T[u]$
being ${\sf Exp}(f(u))$-distributed, and denote
$$T[u,v):=\sumtwo{w\in\IS}{u\le w<v} T[w] \qquad \mbox{for all $0\leq u\leq v$.}$$
The following lemma shows that  $T[u,v)$ is a good approximation to $T_s[u,v)$ in
many cases. 

\begin{lemma}\label{appro2}
Fix $\eta_1\in(0,1)$, let $s\in\IT$ and denote by $\tau$ the entry time into $u$ of the process $Z[s,\cdot]$.
One can couple $(T_s[u])_{u\in\IS}$ and $(T[u])_{u\in\IS}$ such that, almost surely,
$$\ind_{\{\bar f(u) \Delta \tau\le \eta_1\}}    |T_s[u]- T[u]| \leq (1\vee \eta_2 \bar f(u)) \Delta \tau,$$
where $\eta_2$ is a constant only depending on $\eta_1$.
\end{lemma}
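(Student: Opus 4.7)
The plan is to introduce, for each $u\in\IS$, an auxiliary rate-one Poisson process $\Pi^u$, with $(\Pi^u)_{u\in\IS}$ mutually independent and independent of the network's history up to the first visit to $u$, and to use $\Pi^u$ both to drive the Bernoulli jump decisions of the network while in state $u$ and to define $T[u]:=\Pi^u_1/\bar f(u)$. Then $T[u]\sim\mathsf{Exp}(\bar f(u))$ and the family $(T[u])_{u\in\IS}$ is automatically independent. To realise this concretely, let $n_0:=\Psi^{-1}(\tau)$, $p_j:=\bar f(u)/j$ for $j\ge n_0$, $\ell_j:=-\log(1-p_j)$, and $L_k:=\sum_{j=n_0}^{k}\ell_j$ with $L_{n_0-1}:=0$; declare that the network jumps at the original-time transition $j\to j+1$ iff $\Pi^u$ has at least one arrival in $[L_{j-1},L_j)$. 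Since the probability of such an arrival is $1-e^{-\ell_j}=p_j$ and the intervals are disjoint, these indicators are independent $\mathsf{Bern}(p_j)$'s, reproducing the true dynamics.

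The main estimate is a short Taylor bound. On $\{\bar f(u)\Delta\tau\le\eta_1\}$ one has $p_j\le p_{n_0}=\bar f(u)\Delta\tau\le\eta_1<1$ for every $j\ge n_0$, so uniformly
\[0\;\le\;\ell_j-p_j\;\le\;\frac{p_j^2}{2(1-\eta_1)}.\]
Summing and using $\sum_{j\ge n_0}j^{-2}\le 2/n_0=2\Delta\tau$ yields, for every $k\ge n_0$,
\[0\;\le\;L_k-\bar f(u)\sum_{j=n_0}^{k}\frac{1}{j}\;\le\;C_{\eta_1}\,\bar f(u)^2\,\Delta\tau,\]
with $C_{\eta_1}$ depending only on $\eta_1$.

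Let $M$ be the first step at which a jump occurs, so that $\Pi^u_1\in[L_{M-1},L_M)$ and $T_s[u]=\sum_{j=n_0}^{M}1/j$. Dividing by $\bar f(u)$ and invoking the Taylor estimate at $k=M-1$ and $k=M$ gives
\[L_{M-1}/\bar f(u)\;\ge\;T_s[u]-1/M,\qquad L_M/\bar f(u)\;\le\;T_s[u]+C_{\eta_1}\bar f(u)\Delta\tau,\]
so that $T[u]-T_s[u]\in[-1/M,\,C_{\eta_1}\bar f(u)\Delta\tau)$. Combined with $1/M\le 1/n_0=\Delta\tau$ this yields
\[|T_s[u]-T[u]|\;\le\;\max\bigl(1/M,\,C_{\eta_1}\bar f(u)\Delta\tau\bigr)\;\le\;(1\vee\eta_2\bar f(u))\,\Delta\tau\]
with $\eta_2:=C_{\eta_1}$, which is the asserted bound. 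The only real technical point is setting up the $(\Pi^u)_{u\in\IS}$ on an enlarged probability space so that each is independent of the network's history at the (random) first entry time into $u$; this is handled by attaching $\Pi^u$ to the vertex label $u$ at the outset and activating it only on entry. Everything else reduces to the above Taylor/quantisation estimate, so no serious obstacle is expected.
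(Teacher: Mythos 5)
Your proof is correct and, despite the different packaging, is essentially the paper's argument: the Poisson embedding realizes exactly the monotone (quantile) coupling used there, since $1-\bar F(v)=e^{-L_k}$ for the appropriate $k$, and your comparison of $L_k$ with $\bar f(u)\sum_{j=n_0}^{k}1/j$ via $0\le-\log(1-p)-p\le p^2/(2(1-\eta_1))$ together with $\sum_{j\ge n_0}j^{-2}\le 2\Delta\tau$ is the same Taylor estimate the paper applies directly to the distribution functions. The one small added value of your version is that attaching an independent $\Pi^u$ to each state $u\in\IS$ makes the simultaneous construction of the independent family $(T[u])_{u\in\IS}$, consistent with the random entry times, explicit rather than implicit.
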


\begin{proof}
We fix $t\in \IT$  with $\bar f(u) \Delta  t\leq \eta_1$.
Note that it suffices to find an appropriate coupling conditional on the event $\{\tau=t\}$.
Let $U$ be a uniform random variable and let $F$ and $\bar F$ denote the (conditional) distribution functions of $T[u]$ and $T_s[u]$, respectively. We couple $T[u]$ and $T_s[u]$ by setting
$T[u]=F^{-1}(U)$ and $T_s[u]=\bar F^{-1}(U)$,
where $\bar F^{-1}$ denotes the right continuous inverse of~$\bar F$.
The variables $T[u]$ and $T_s[u]$ satisfy the assertion of the lemma if and only if
\begin{align}\label{eq0802-1}
F\big(v- (1\vee \eta_2\bar f(u)) \Delta t\big)\leq \bar F(v)\leq F\big(v + (1\vee \eta_2\bar f(u)) \Delta t \big)\qquad
\mbox{ for all } v\geq 0.
\end{align}
We compute
$$1-\bar F(v)  = \prodtwo{t\le w, w+\Delta w\le t+v}{w\in\IT} \big(1-\bar f(u) \Delta w\big) =
\exp \sumtwo{t\le w, w+\Delta w\le t+v}{u\in\IT} \log \big(1-\bar
f(u) \Delta w\big)$$ Next observe that, from a Taylor expansion, for
a suitably large $\eta_2>0$, we have $-\bar f(u) \Delta w- \eta_2\,
\bar f(u)^2 [\Delta w]^2 \leq \log \big(1-\bar f(u) \Delta w\big)
\leq -\bar f(u) \Delta w,$ so that
\begin{align*}
1-\bar F(v)  & \leq \exp \Bigl(-\bar f(u)\sumtwo{t\le w, w+\Delta
w\le t+v}{w\in\IT}  \Delta w\Bigr)  \leq \exp\big(-\bar f(u)
(v-\Delta t)\big) = 1- F(v-\Delta t).
\end{align*}
This proves the left inequality in (\ref{eq0802-1}). It remains to prove the right inequality.
Note that
\begin{align*}
1-\bar F(v) &\geq \exp \Bigl(-\sumtwo{t\le w, w+\Delta w\le
t+v}{w\in\IT}  (\bar f(u) \Delta w+ \eta_2\,\bar f(u)^2[\Delta w]^2)
\Bigr)
\end{align*}
and
$$\sumtwo{t\le w, w+\Delta w\le t+v}{w\in\IT}  [\Delta w]^2\leq
\sum_{m=[\Delta t]^{-1}}^\infty \frac1{m^2}\leq \frac1{[\Delta
t]^{-1}-1}\leq \Delta t.$$ Consequently, $1-\bar F(v)   \geq \exp
\{-\bar f(u)(v+\eta_2\bar f(u) \Delta t)\}
 = 1- F(v+\eta_2\bar f(u) \Delta t)$.
\end{proof}
\smallskip

As a direct consequence of this lemma we obtain an exponential approximation.

\begin{lemma}\label{le1025-1}
Suppose that, for some $\eta<1$ we have  $f(j)\leq \eta(j+1)$ for all~$j\in\IZ_+$.  
If
$$\sum_{j=0}^\infty \frac{f(j)^2}{(j+1)^2} < \infty,$$
then for each $s\in\IT$ one can couple $T_s$ with $T$ such that, for all $\lam\geq 0$,
$$
\IP\big(\sup_{u\in\IS}  |T_s[0,u)-T[0,u)|\geq \lam+\sqrt {2K}\big)
\leq 4 \,e^{-\frac{\lam^2}{2K}},$$
where $K>0$ is a finite constant only depending on $f$.
\end{lemma}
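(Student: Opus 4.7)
The strategy is to iterate the per-state coupling of Lemma~\ref{appro2} to build a global coupling in which the discrepancies $T_s[w]-T[w]$ form bounded martingale differences, and then to apply an Azuma--Hoeffding maximal inequality whose variance budget is controlled by the hypothesis $\sum_j f(j)^2/(j+1)^2<\infty$.

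Enumerate $\IS=\{w_0<w_1<\dots\}$ with $w_k=\Phi(k)$, and let $(U_k)_{k\ge 0}$ be i.i.d.\ uniforms on $[0,1]$ independent of the graph. At each $k$, use $U_k$ to perform the quantile coupling of Lemma~\ref{appro2}, producing $T_s[w_k]$ with its natural conditional law and $T[w_k]\sim\mathrm{Exp}(f(k))$. Since the target CDF is deterministic and each $U_k$ is fresh, $(T[w_k])_{k\ge 0}$ are jointly independent as required. The precondition $\bar f(w_k)\,\Delta\tau_k\le\eta_1$ of Lemma~\ref{appro2} holds deterministically with $\eta_1:=\eta$, because $f(j)\le\eta(j+1)$ combined with the pathwise bound $n_k\ge m+k$ on the entry step (the degree of vertex $m:=\Psi^{-1}(s)$ cannot exceed $n-m$ at original time $n$) yields $\bar f(w_k)\Delta\tau_k=f(k)/n_k\le\eta$. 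Lemma~\ref{appro2} then gives $|T_s[w_k]-T[w_k]|\le C_k:=(1\vee\eta_2\bar f(w_k))\,\Delta\tau_k\le C'(1+f(k))/(m+k)$, so that
$$\sum_{k\ge 0} C_k^2 \ \le\ (C')^2 \sum_{k\ge 0}\frac{(1+f(k))^2}{(m+k)^2}\ \le\ K$$
for a constant $K=K(f)<\infty$, using both the hypothesis $\sum_k f(k)^2/(k+1)^2<\infty$ and $\sum_k 1/(k+1)^2<\infty$.

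The crucial step is to observe that the discrepancies $X_k:=T_s[w_k]-T[w_k]$ are conditionally centered given the filtration $\cH_{k-1}$ generated by the graph and the $U_l$ for $l<k$. For the exponential, $\IE[T[w_k]\mid\cH_{k-1}]=1/f(k)$ by construction. For the discrete side, write $T_s[w_k]=\sum_{l=0}^{N-1}1/(n_k+l)$ with $N$ the number of discrete steps spent at degree $k$; the identity $\IP(N>l)/(n_k+l)=\IP(N=l+1)/f(k)$ telescopes to $\IE[T_s[w_k]\mid\cH_{k-1}]=\sum_{l\ge 0}\IP(N=l+1)/f(k)=1/f(k)$. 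Hence $(X_k)$ is a bounded martingale difference sequence with $\sum C_k^2\le K$, and the cumulative discrepancy is $M_u:=T_s[0,u)-T[0,u)=\sum_{w_k<u} X_k$. The Azuma--Hoeffding maximal inequality, applied via the exponential supermartingale $\exp(\theta M_u-\theta^2\sum_{k:w_k<u}C_k^2/2)$ together with Doob's inequality, yields
$$\IP\Bigl(\sup_{u\in\IS}|M_u|\ge x\Bigr)\le 2\exp\bigl(-x^2/(2K)\bigr)$$
for all $x\ge 0$. Taking $x=\lambda+\sqrt{2K}$ and using $(\lambda+\sqrt{2K})^2\ge\lambda^2$ delivers the stated bound, the factor $4$ comfortably absorbing the sharper constant $2$.

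The main potential obstacle is the conditional mean identity for $T_s[w_k]$: without it, one would be left with a drift term whose absolute values are bounded only by $C_k$ and hence need not be summable in general. The telescoping above shows that there is in fact no drift, reducing the whole problem to an Azuma--Hoeffding concentration that is comfortably controlled by the variance budget $K$.
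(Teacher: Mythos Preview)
Your proof is correct and follows the same approach as the paper: couple state by state via Lemma~\ref{appro2}, observe that the accumulated discrepancy $M_v=T_s[0,v)-T[0,v)$ is a martingale with increments bounded by a constant multiple of $f(k)/(k+1)$, and apply an Azuma--Hoeffding maximal inequality (the paper's Lemma~\ref{conc2}) with variance budget $K=\sum_k c_k^2<\infty$. Your write-up is in fact more explicit than the paper's in two places: you spell out why the fresh uniforms make the $T[w_k]$ jointly independent, and you verify the conditional mean identity $\IE[T_s[w_k]\mid\cH_{k-1}]=1/f(k)$ via the telescoping argument, which the paper simply asserts here (the same computation appears later as Lemma~\ref{l:T_mart}).
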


\begin{proof}
Fix $s\in\IT$ and denote by $\tau_u$ the first entry time of $Z[s,\cdot]$ into the state $u\in\IS$.
We couple the random variables $T[u]$ and $T_s[u]$ as in the previous lemma and let, for $v\in\IS$,
$$
M_v=\sum_{\heap{u\in \IS}{u< v}}  \big(T_s[u]-T[u]\big)=T_s[0,v)-T[0,v).
$$
Then $(M_v)_{v\in \IS}$ is a martingale. Moreover, for each $v=\Phi(j)\in\IS$ one has
$\tau_v\geq \Psi(j+1)$ so that $\Delta \tau_v\leq 1/(j+1)$. Consequently, using the 
assumption of the lemma one gets that
$$\Delta \tau_v \,\bar f(v)\leq  \frac1{j+1} f(j)=:c_v\leq \eta<1.$$
Thus by Lemma \ref{appro2} there exists a constant $\eta'<\infty$ depending only on $f(0)$ and $\eta$ such that
the increments of the martingale  $(M_v)$  are bounded by
$$
|T_s[v]-T[v]| \leq \eta'\, c_v.
$$
By assumption we have $K:=\sum_{v\in \IS} c_v^2<\infty$ an we conclude with Lemma \ref{conc2}  that for $\lam\geq 0$,
\begin{align*}
\IP\big(\sup_{u\in\IS}  |T_s[0,u)-T[0,u)|\geq \lam+\sqrt {2K}\big)
\leq 4 \,e^{-\frac{\lam^2}{2K}}.
\end{align*}
\end{proof}

We define $(Z_t)_{t\geq 0}$ to be the $\IS$-valued process given by
\begin{equation}\label{Zdef}
Z_t:= \max \{ v\in\IS: T[0,v)\leq t\},
\end{equation}
and start by observing its connection to the indegree evolution.

\begin{corollary}
In distribution on the Skorokhod space, we have
$$
\lim_{s\uparrow \infty} (Z[s,s+t])_{t\geq0} = (Z_t)_{t\geq 0}.
$$
\end{corollary}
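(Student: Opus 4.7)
The plan is to use the exponentially good approximation of Lemma~\ref{appro2} to couple, for each $s\in\IT$, the occupation-time sequence $(T_s[u])_{u\in\IS}$ of the process $Z[s,s+\cdot]$ with the independent exponentials $(T[u])_{u\in\IS}$ driving $(Z_t)_{t\geq 0}$, and then to let $s\to\infty$ and verify that the coupling error vanishes. Since $Z_\cdot$ and $Z[s,s+\cdot]$ are non-decreasing $\IS$-valued step functions, it suffices to prove that the jump-time sequences $(T_s[0,v))_{v\in\IS}$ and $(T[0,v))_{v\in\IS}$ get close as $s\to\infty$.

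First I would apply Lemma~\ref{appro2} level-by-level to construct a coupling satisfying
$$|T_s[u]-T[u]| \leq (1\vee \eta_2 \bar f(u))\,\Delta \tau_u$$
on the event $\{\bar f(u)\,\Delta \tau_u\le \eta_1\}$, where $\tau_u$ denotes the entry time of $Z[s,\cdot]$ into level $u\in\IS$. Since $Z[s,s]=0$ one has $\tau_u\geq s$, hence $\Delta\tau_u=1/\Psi^{-1}(\tau_u)\leq 1/\Psi^{-1}(s)\to 0$ as $s\to\infty$. In particular the indicator event holds for all sufficiently large $s$, and thus $T_s[u]\to T[u]$ almost surely for each fixed $u\in\IS$. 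Summing over the finite set $\{u\in\IS:u<v\}$ yields $T_s[0,v)\to T[0,v)$ almost surely for every $v\in\IS$.

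To convert this pointwise convergence of jump times into Skorokhod convergence on compact time intervals, I would argue as follows. Almost surely no two jumps of $Z_\cdot$ coincide, since the $T[u]$ are continuous; and since $\sum_{u\in\IS}1/\bar f(u)=\Phi(\infty)=\infty$ (using $f(j)\leq j+1$), the partial sums $T[0,v)$ diverge as $v\to\infty$ in $\IS$ by a three-series argument, so on any compact time window both processes perform only finitely many jumps almost surely. A standard characterisation of Skorokhod convergence for monotone step processes with fixed jump heights and converging jump times then gives almost sure convergence under the coupling, hence convergence in distribution on the Skorokhod space.

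The main technical point is realising the coupling consistently across all $u\in\IS$ on a single probability space. Since Lemma~\ref{appro2} realises $T_s[u]$ and $T[u]$ conditionally on $\tau_u$ as pushforwards of a single uniform variable by (right-continuous inverse) distribution functions, one may proceed level by level with an independent sequence of auxiliary uniforms. Uniform control in $u$ is not needed here, since for each fixed $v$ only finitely many levels $u<v$ contribute to $T_s[0,v)$.
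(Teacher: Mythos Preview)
Your proposal is correct and follows essentially the same approach as the paper: couple $(T_s[u])_{u\in\IS}$ with $(T[u])_{u\in\IS}$ via Lemma~\ref{appro2}, use the deterministic bound $\Delta\tau_u\leq 1/\Psi^{-1}(s)\to 0$ to force $T_s[0,v)\to T[0,v)$ for each $v\in\IS$, and deduce Skorokhod convergence of the step processes. The paper's proof is terser---it simply asserts the convergence of jump times and the passage to Skorokhod convergence---whereas you spell out why the indicator event in Lemma~\ref{appro2} is eventually satisfied, why $T[0,v)\to\infty$, and why jump-time convergence for monotone step processes with fixed jump heights yields Skorokhod convergence; these details are welcome but do not constitute a different route.
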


\begin{proof}
Recall that Lemma \ref{appro2} provides a coupling between
$(T_s[u])_{u\in\IS}$ and $(T[u])_{u\in\IS}$ for any fixed $s\in\IT$.
We may assume that the coupled random variables
$(\bar T_s[u])_{u\in\IS}$ and $(\bar T[u])_{u\in\IS}$ are defined
on the same probability space for all $s\in\IT$ (though this
does not respect the joint distributions of $(T_s[u])_{u\in\IS}$
for different values of $s\in\IT$). Denote by $(\bar
Z[s,\,\cdot\,])_{ s\in\IT}$ and $(\bar Z_t)$ the corresponding processes such that
$\bar T_s[0,u)+s=s+\sum_{v<u} \bar T_s[v]$ and $\bar T[0,u)=\sum_{v<u} \bar T[v]$ are
the entry times of $(\bar Z[s,s+t])$ and $(\bar Z_t)$ into the state
$u\in\IS$. By Lemma \ref{appro2} one has that, almost surely,
$\lim_{s\uparrow\infty} \bar T_s[0,u) = \bar T[0,u)$
and therefore one obtains almost sure convergence of
$(\bar Z[s,s+t])_{t\ge 0}$ to $(\bar Z_t)_{t\ge 0}$
in the Skorokhod topology.
\end{proof}



\begin{prop}\label{expoapp}
Uniformly in~$s$, the processes 
\begin{itemize}
\item $(\frac1\kappa Z_{\kappa t}\colon t\geq 0)_{\kappa>0}$ and $(\frac1\kappa Z[s,s+\kappa t]\colon t\geq 0)_{\kappa>0}$;
\item $(\frac1{a_\kappa} (Z_{\kappa t}-\kappa t) \colon t\geq 0)_{\kappa>0}$ and 
$(\frac1{a_\kappa} (Z[s,s+\kappa t]-\kappa t) \colon t\geq 0)_{\kappa>0}$,
\end{itemize}
are exponentially equivalent on the scale of the large, respectively, moderate
deviation principles.
\end{prop}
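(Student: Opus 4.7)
The plan is to use the coupling of Lemma~\ref{le1025-1} to convert the Gaussian control on the occupation-time discrepancy
$U:=\sup_{u\in\IS}|T_s[0,u)-T[0,u)|$
into a uniform pathwise bound between $Z[s,s+\cdot\,]$ and $Z_\cdot$. Since $T_s[0,u)$ and $T[0,u)$ are the first-entry times of these two processes into the level $u\in\IS$, on the event $\{U\leq\epsilon\}$ one has the deterministic sandwich $Z_{(t-\epsilon)\vee 0}\leq Z[s,s+t]\leq Z_{t+\epsilon}$ for every $t\geq 0$, and therefore
$$\sup_{t\leq\kappa T}\bigl|Z[s,s+t]-Z_t\bigr|\leq \sup_{t\leq\kappa T}\bigl(Z_{t+\epsilon}-Z_{(t-\epsilon)\vee 0}\bigr).$$

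Given a threshold $\epsilon=\epsilon(\kappa)$ to be chosen, I would then decompose
$$\IP\bigl(\|X_\kappa-Y_\kappa\|_\infty>\delta\bigr)\leq \IP(U>\epsilon)+\IP\Bigl(\sup_{t\leq\kappa T}\bigl(Z_{t+\epsilon}-Z_{(t-\epsilon)\vee 0}\bigr)>\delta\sigma_\kappa\Bigr),$$
with the normalisation $\sigma_\kappa=\kappa$ in the LDP setting and $\sigma_\kappa=a_\kappa$ in the MDP setting. The first term is bounded by $4\,e^{-(\epsilon-\sqrt{2K})^2/(2K)}$ via Lemma~\ref{le1025-1}, where $K$ depends only on $f$; it is super-exponentially small at the speed of the corresponding principle as soon as $\epsilon^2$ dominates that speed. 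For the oscillation term I would first truncate on the high-probability event $\{Z_{2\kappa T}\leq 2\kappa T\}$: using the martingale representation $Z_t=t+M_t$ of Lemma~\ref{degs} and standard concentration for the bounded-jump martingale $M$, the complement is super-exponentially small at both of the relevant speeds. On this event the jump rate $\bar f(Z_t)$ is uniformly bounded by a multiple of $\bar f(\kappa)$, so the jump count $\tilde Z_{t+\epsilon}-\tilde Z_{t-\epsilon}$ is stochastically dominated by a Poisson variable of mean $O(\epsilon\bar f(\kappa))$, and a Chernoff estimate combined with a polynomial-in-$\kappa$ union bound over a discretisation of~$[0,\kappa T]$ delivers super-exponential decay whenever $\epsilon\ll\sigma_\kappa$.

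Concretely I would take $\epsilon=\kappa/\log\kappa$ in the LDP case and $\epsilon=a_\kappa/\log\kappa$ in the MDP case. Using $\bar f(u)=u^{\alpha/(1-\alpha)}\bar\ell(u)$ from Lemma~\ref{reglema} and the identity $\kappa\bar f(\kappa)=\kappa^{1/(1-\alpha)}\bar\ell(\kappa)$, one checks that $\epsilon^2$ dominates the LDP speed $\kappa^{1/(1-\alpha)}\bar\ell(\kappa)$ precisely because $1/(1-\alpha)<2$ when $\alpha<1/2$, while the Poisson--Chernoff exponent is of order $\delta\kappa\bar f(\kappa)\log\log\kappa$ and beats the LDP speed by a factor of $\log\log\kappa$. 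The MDP case reduces to the bandwidth hypothesis $\kappa^{(1-2\alpha)/(2(1-\alpha))}\bar\ell(\kappa)^{-1/2}\ll a_\kappa\ll\kappa$ of Theorem~\ref{MDP}, which is exactly what makes the two requirements $\epsilon^2\gg a_\kappa^2\kappa^{(2\alpha-1)/(1-\alpha)}\bar\ell(\kappa)$ and $\epsilon\ll a_\kappa$ jointly feasible. Uniformity in $s$ is automatic because the bound in Lemma~\ref{le1025-1} is already uniform in~$s$.

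The main technical obstacle is exactly this simultaneous balance: Lemma~\ref{le1025-1} forces $\epsilon$ to be large in order to kill the Gaussian tail on $U$, while the oscillation estimate demands $\epsilon$ small so that $Z$ cannot grow by $\delta\sigma_\kappa$ in a window of length $2\epsilon$. The LDP case has a comfortable polynomial margin, but in the MDP the admissible window is narrow, and its non-emptiness relies critically on both bandwidth bounds assumed in Theorem~\ref{MDP}.
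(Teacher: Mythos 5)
Your first step coincides with the paper's: both proofs use the coupling of Lemma~\ref{le1025-1} to reduce everything to the event $\{U\leq\eps\}$ with $U=\sup_{u\in\IS}|T_s[0,u)-T[0,u)|$, and the Gaussian tail $4e^{-\lam^2/(2K)}$ is indeed what makes the bound uniform in $s$ and super-exponential at both speeds (your exponent count $\eps^2$ versus $\kappa^{1/(1-\alpha)}\bar\ell(\kappa)$, resp.\ $a_\kappa b_\kappa$, is correct). Where you diverge is in what you do on $\{U\leq\eps\}$: you convert the occupation-time bound into the sandwich $Z_{(t-\eps)\vee 0}\leq Z[s,s+t]\leq Z_{t+\eps}$ and then try to control the oscillation of $Z$ over windows of length $2\eps$ directly. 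The paper instead reads $\{U\leq\eps\}$ as saying that $Z[s,s+\cdot]$ is a time change of $Z_\cdot$ by at most $\eps$, covers the compact level set $\{J\leq\eta\}$ by finitely many $\delta$-balls around (uniformly continuous) functions, and observes that a path close to such a function moves by at most $2\delta$ under a small time change; the residual event that $\frac1\kappa Z_{\kappa\cdot}$ is far from the whole level set is then killed by the LDP upper bound for the approximating process itself, which is proved independently in Section~\ref{sec42}. This sidesteps any oscillation estimate. (The paper does prove oscillation bounds of the type you need, but later and by a different method: Lemmas~\ref{uniapp} and~\ref{ea} derive them from the finite-dimensional principles and the goodness of the rate function, not from Poisson domination.)

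The genuine gap is in your truncation. The event $\{Z_{2\kappa T}\leq 2\kappa T\}$ is \emph{not} super-exponentially likely: since $Z_t-t$ is a mean-zero martingale whose bracket $\vphi_t$ diverges in the weak preference case, $\IP(Z_{2\kappa T}>2\kappa T)\to\frac12$. Even replacing the threshold by $C\kappa T$ for a fixed large $C$, Azuma (or the occupation-time LDP) only gives a failure probability of order $e^{-c_C\,\kappa^{1/(1-\alpha)}\bar\ell(\kappa)}$ with a \emph{fixed} rate $c_C$, which is exponentially but not super-exponentially small, and hence insufficient for exponential equivalence (one needs the discrepancy probability to be $o(e^{-R\cdot\mathrm{speed}})$ for every $R$). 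Relatedly, bounding the $Z$-increment by a jump count dominated by a single ${\sf Poisson}(O(\eps\bar f(\kappa)))$ variable conflates count and increment: the jump at level $u$ has size $1/\bar f(u)$, so the number of jumps needed to raise $Z$ by $\delta\sigma_\kappa$ from level $v$ is $\#(\IS\cap[v,v+\delta\sigma_\kappa))$, which depends on $v$. The clean repair is to drop the truncation entirely and bound $\IP(Z_{t+\eps}-Z_{t-\eps}>\delta\sigma_\kappa)$ by $\sup_{v}\IP(T[v,v+\delta\sigma_\kappa)<2\eps)$; a Chernoff bound on this sum of independent exponentials shows the exponent is of order $(\#\mbox{levels})\cdot\log(\delta\sigma_\kappa/\eps)\gtrsim\mathrm{speed}\cdot\log\log\kappa$ \emph{uniformly} in $v\geq0$ (the estimate only improves as $v$ grows), which recovers exactly the $\log\log\kappa$ margin you quote. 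Alternatively, adopt the paper's covering argument. As written, though, the step ``the complement is super-exponentially small'' is false and the oscillation bound does not close.
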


\begin{proof} We only present the proof for the first large deviation principle of Theorem~\ref{LDP} since all other statements can be inferred analogously.

We let $U_\delta(x)$ denote the open ball around $x\in \cI[0,\infty)$ with radius $\delta>0$  in an arbitrarily fixed metric $d$ generating the topology of uniform convergence on compacts, and, for fixed $\eta>0$, we cover the compact set $K=\{x\in\cI[0,\infty)\colon I(x)\leq \eta\}$ with finitely many balls $(U_\delta(x))_{x\in\II}$, where $\II\subset K$. 
Since every $x\in\II$ is continuous, we  can find $\eps>0$ such that for every $x\in\II$ and increasing and right continuous $\tau:[0,\infty)\to[0,\infty)$ with $|\tau(t)-t|\leq \eps$,
$$
y\in U_\delta(x) \ \Rightarrow \ y_{\tau(\cdot)}\in U_{2\delta}(x).
$$
For fixed $s\in\IT$ we couple the occupation times $(T_s[0,u))_{u\in\IS}$ and $(T[0,u))_{u\in\IS}$ as in Lemma~\ref{le1025-1},
and hence implicitly the evolutions $(Z[s,t])_{t\ge s}$ and $(Z_t)_{t\ge 0}$.  Next, note that $Z[s,s+\,\cdot\,]$  can be transformed into 
$Z_\cdot$ by applying a time change $\tau$ with $|\tau(t)-t|\leq \sup_{u\in\IS} |T_s[0,u)-T[0,u)|$.
Consequently,
\begin{align*}
\IP\big(d\big(\sfrac1\kappa Z[s,s+\kappa\,\cdot\,], \sfrac1\kappa Z_{\kappa\cdot}\big)\geq 3\delta\big) 
\leq \IP\Bigl(\sfrac1\kappa Z_{\kappa\cdot}\not\in \bigcup_{x\in\II} U_{\delta}(x)\Bigr) +\IP\Big( \sup_{u\in\IS} |\bar T_s[0,u)-\bar T[0,u)|\geq 
\kappa\eps\Big),
\end{align*}
and an application of Lemma~\ref{le1025-1} gives a uniform upper bound in $s$, namely
$$
\limsup_{\kappa\to \infty} \sup_{s\in\IS} \frac 1{\kappa^{\frac1{1-\alpha}}\bar\ell(\kappa)} \log \IP\bigl(d\big(\sfrac1\kappa Z[s,s+\kappa\,\cdot\,], \sfrac1\kappa Z_{\kappa\cdot}\big)\geq 3\delta\bigr)\leq -\eta.
$$
Since $\eta$ and $\delta>0$ were arbitrary this proves the first statement.
\end{proof}

\subsection{The large deviation principles}\label{sec42}

By the exponential equivalence, Proposition~\ref{expoapp}, and \cite[Theorem~4.2.13]{DemZei98} it suffices to prove the large and moderate
deviation principles in the framework of the exponentially equivalent processes~\eqref{Zdef} constructed 
in the previous section.
\smallskip

The first step in the proof of the first part of Theorem~\ref{LDP},
is to show a large deviation principle for the occupation times
of the underlying process.  Throughout this section we denote
$$a_\kappa:=\kappa ^{1/(1-\alpha)} \bar \ell(\kappa).$$
We define the function $\xi\colon\IR\to(-\infty,\infty]$ by
$$\xi(u)=\begin{cases}
\log \frac1{1-u} & \text{ if } u<1,\\ \infty &\text{ otherwise}.
\end{cases}$$
Its Legendre-Fenchel transform is easily seen to be
$$
\xi^*(t)=\begin{cases} t-1-\log t & \text{ if }t>0,\\ \infty &
\text{ otherwise.}
\end{cases}
$$

\begin{lemma}\label{occuLDP}
For fixed $0\leq u<v$ the family $(\frac1{\kappa}T{[\kappa u,\kappa
v)})_{\kappa>0}$ satisfies a large deviation principle with speed
$(a_\kappa)$ and rate
function  $\Lam^*_{u,v}(t)=\sup_{\zeta\iR} [
t\zeta-\Lam_{u,v}(\zeta)]$, where
$$
\Lam_{u,v}(\zeta)=\int_{u}^v s^{\frac{\alpha}{1-\alpha}}\xi(\zeta
s^{-\alpha/(1-\alpha)}) \,ds.
$$
\end{lemma}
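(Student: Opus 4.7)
The natural approach is to apply the G\"artner--Ellis theorem to the scaled random variable $\frac1\kappa T[\kappa u,\kappa v)$. Because the sum $T[\kappa u,\kappa v) = \sum_{w\in\IS\cap[\kappa u,\kappa v)} T[w]$ consists of independent exponentials with rate $\bar f(w)$, its scaled logarithmic moment generating function is explicit:
\[
\Lam_\kappa(\zeta) := \frac{1}{a_\kappa}\log \IE\Bigl[\exp\Bigl(\frac{\zeta\, a_\kappa}{\kappa}\, T[\kappa u,\kappa v)\Bigr)\Bigr] = \frac{1}{a_\kappa}\sumtwo{w\in\IS}{\kappa u\le w<\kappa v} \xi\Bigl(\frac{\zeta\, a_\kappa}{\kappa\, \bar f(w)}\Bigr),
\]
interpreted as $+\infty$ whenever any summand diverges. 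The bulk of the work is then to identify $\lim_{\kappa\to\infty}\Lam_\kappa = \Lam_{u,v}$, after which the LDP follows by a routine verification of essential smoothness.

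For the limit identification, I exploit that the successive points of $\IS$ at $w$ are spaced by $1/\bar f(w)$, so the above sum is a Riemann sum for the integral $\int_{\kappa u}^{\kappa v} \xi(\zeta a_\kappa/(\kappa\bar f(w)))\,\bar f(w)\,dw$, with the approximation error controlled by the smoothness of the integrand and the slow variation of $\bar f$. The substitution $w=\kappa s$ turns this into
\[
\frac{\kappa}{a_\kappa}\int_u^v \bar f(\kappa s)\, \xi\Bigl(\frac{\zeta a_\kappa}{\kappa\, \bar f(\kappa s)}\Bigr)\, ds,
\]
and the very choice $a_\kappa = \kappa^{1/(1-\alpha)}\bar\ell(\kappa)$ is made so that $\bar f(\kappa)\kappa/a_\kappa = 1$. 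Combining this with the regular variation of $\bar f$ recorded in Lemma~\ref{reglema} yields $\bar f(\kappa s)\kappa/a_\kappa \to s^{\alpha/(1-\alpha)}$ uniformly on compact subsets of $(0,\infty)$, so that the integrand converges pointwise to $s^{\alpha/(1-\alpha)}\xi(\zeta s^{-\alpha/(1-\alpha)})$. Dominated convergence, with a majorant coming from Potter's bounds for $\bar\ell$ together with the monotonicity of $\xi$, then gives $\Lam_\kappa(\zeta)\to\Lam_{u,v}(\zeta)$ for every $\zeta$ in the interior $(-\infty,u^{\alpha/(1-\alpha)})$ of the effective domain of $\Lam_{u,v}$; divergence outside this endpoint is immediate by isolating a single summand whose argument of $\xi$ eventually exceeds $1$.

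It then remains to invoke the G\"artner--Ellis theorem (\cite[Theorem~2.3.6]{DemZei98}). The limit $\Lam_{u,v}$ is convex and lower semicontinuous as an integral of convex, lower semicontinuous functions in $\zeta$. Differentiation under the integral sign yields
\[
\Lam'_{u,v}(\zeta) = \int_u^v \frac{ds}{1-\zeta s^{-\alpha/(1-\alpha)}},
\]
which tends to $+\infty$ as $\zeta \uparrow u^{\alpha/(1-\alpha)}$ because the integrand blows up at $s=u$, confirming essential smoothness. For $u>0$ this immediately supplies the LDP at speed $a_\kappa$ with rate $\Lam^*_{u,v}$. The one point requiring genuine care is the boundary case $u=0$ with $\alpha>0$, where the effective domain collapses to $(-\infty,0]$ and $0$ is only on its boundary, so the direct G\"artner--Ellis lower bound fails for deviations above the mean; I would treat this by the decomposition $T[0,\kappa v)=T[0,\kappa\delta)+T[\kappa\delta,\kappa v)$, applying the already-established LDP to the second summand, bounding $T[0,\kappa\delta)/\kappa$ by a crude exponential Markov estimate that is uniform in small~$\delta$, and finally sending $\delta\downarrow 0$ with monotone continuity of $\Lam^*_{\delta,v}$ in $\delta$.
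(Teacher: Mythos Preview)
Your proposal is correct and follows essentially the same route as the paper: compute the scaled log-moment generating function, convert the sum over $\IS$ into an integral using that the gap at $w\in\IS$ is $1/\bar f(w)$, substitute $w=\kappa s$, use regular variation of $\bar f$ and dominated convergence to obtain the limit $\Lam_{u,v}$, and conclude via G\"artner--Ellis for $u>0$; the boundary case $u=0$ is then handled by the decomposition $T[0,\kappa v)=T[0,\kappa\eps)+T[\kappa\eps,\kappa v)$ together with continuity of $\Lam^*_{\eps,v}$ as $\eps\downarrow0$.

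The only notable difference is in how the small initial piece $T[0,\kappa\eps)/\kappa$ is controlled. You propose an exponential Markov estimate, whereas the paper simply computes $\IE T[0,\kappa\eps)\sim\kappa\eps$ and $\var(T[0,\kappa\eps))\lesssim\kappa\eps/f(0)$, so that $T[0,\kappa\eps)/\kappa\to\eps$ in probability; since this probability factor tends to~$1$, it drops out of the lower bound at no cost on the $a_\kappa$-scale. The paper's weak-law argument is the lighter tool here and suffices, but your exponential bound would of course also do the job.
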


\begin{proof}
For fixed $u<v$ denote by $\II_\kappa=\II^{[u,v)}_\kappa=\{j\in\IZ_+
\colon \Phi(j)\in [\kappa u,\kappa v)\}$. We get, using $(S_j)$ for
the underlying sequence of ${\sf Exp}(f(j))$-distributed independent
random variables,
\begin{align*}
\Lam_\kappa(\th):=\ &\log \IE e^{\th T{[\kappa u,\kappa v)}/\kappa}\\
=\ &\sum_{j\in \II_\kappa} \log \IE e^{\frac{\th}{\kappa} S_j}= \sum_{j\in \II_\kappa} \log \frac{1}{1-\frac{\th}{\kappa f(j)}}= \sum_{t\in \Phi(\II_\kappa)} \xi\Big(\frac{\th}{\kappa f(\Phi^{-1}(t))}\Big)\\
=\ & \int_{\bar\II_\kappa} f(\Phi^{-1}(t))\, \xi\Big(\frac{\th}{\kappa
f(\Phi^{-1}(t))}\Big) \,dt,
\end{align*}
where $\bar\II_\kappa=\bar \II_\kappa^{[u,v)}=
\bigcup_{j\in\II_\kappa} [\Phi(j),\Phi(j+1))$. Now choose $\th$ in
dependence on $\kappa$ as $\th _\kappa=\zeta \kappa ^{1/(1-\alpha)}
\bar \ell(\kappa)$ with $\zeta<u^{\alpha/(1-\alpha)}$. Then
\begin{align*}
\int_{\bar\II_\kappa} \bar f(t)\xi\Big(\frac{\th_\kappa}{\kappa \bar f(t) }\Big) \,dt&=\kappa \int_{\bar\II_\kappa/\kappa} \bar f(\kappa s) \xi\Big(\frac{\th_\kappa}{\kappa \bar f(\kappa s)}\Big) \,ds\\
&=\kappa^{1/(1-\alpha)}\int_{\bar\II_\kappa/\kappa}
s^{\frac{\alpha}{1-\alpha}} \bar\ell(\kappa s) \xi \Big(\frac{\zeta
\bar \ell(\kappa)}{s^{\frac{\alpha}{1-\alpha}}  \bar \ell(\kappa
s)}\Big) \,ds.
\end{align*}
Note that $\inf \bar\II_\kappa/\kappa$ and $\sup \bar\II_\kappa/\kappa$
approach the values $u$ and $v$, respectively. Hence, we conclude
with the dominated convergence theorem that  one has
$$
\Lam_\kappa(\th_\kappa)\sim \kappa^{1/(1-\alpha)} \bar \ell(\kappa )
\underbrace{\int_{u}^v s^{\frac{\alpha}{1-\alpha}}\xi
(\mbox{$\frac{\zeta }{s^{\frac{\alpha}{1-\alpha}}}$})
\,ds}_{=\Lam_{u,v}(\zeta)}
$$
as $\kappa$ tends to infinity. Now the G\"artner-Ellis theorem
implies the large deviation principle for the family $(T{[\kappa
u,\kappa v)})_{\kappa>0}$ for $0<u<v$. It remains to prove the large
deviation principle for $u=0$. Note that
$$
\IE T{[0,\kappa v)}=\IE \sum_{j\in \II_\kappa} S_j
=\int_{\bar\II_\kappa} f(\Phi^{-1}(t)) \frac1{f(\Phi^{-1}(t))}
\,dt\sim \kappa v
$$
and
$$
\var (T{[0,\kappa v)})=\sum_{j\in \II_\kappa} \var(
S_j)=\int_{\bar\II_\kappa} f(\Phi^{-1}(t)) \frac1{f(\Phi^{-1}(t))^2}
\,dt\lesssim \frac1{f(0)} \kappa v.
$$
Consequently, $\frac{T{[0,\kappa \eps)}}\kappa$ converges in
probability to $\eps$. Thus for $t<v$
$$
\IP\Bigl(\sfrac1\kappa T{[0,\kappa v)} \leq t\Bigr) \geq
\underbrace{\IP\Bigl(\sfrac1\kappa T{[0,\kappa\eps)}\leq
(1+\eps)\eps\Bigr)}_{\to1} \IP\Bigl(\sfrac1\kappa T{[\kappa
\eps,\kappa v)}\leq t-(1+\eps)\eps\Bigr)
$$
and for sufficiently small $\eps>0$
$$
\liminf_{\kappa\to \infty} \frac1{a_\kappa}
\log\IP\Bigl(\sfrac1\kappa T{[0,\kappa v)} \leq t\Bigr)\geq
-\Lam_{\eps,v}^*(t-(1+\eps)\eps),
$$
while the upper bound is obvious.
\end{proof}

The next  lemma is necessary for the analysis of the rate function
in Lemma~\ref{occuLDP}. It involves the function $\psi$ defined as $\psi(t)=1-t+t\log t$ for $t\geq 0$.

\begin{lemma}\label{simplJ}
For fixed  $0<x_0<x_1$ there exists an increasing function $\eta\colon\IR_+\to \IR_+$ with $\lim_{\delta\dto0}
\eta_\delta=0$ such that for any $u,v\in[x_0,x_1]$ with $\delta :=v-u> 0$ and all $w\in[u,v], t> 0$ one has
$$\Bigl|\Lam^*_{u,v}(t)-  w^{\frac{\alpha}{1-\alpha}} \,t\, \psi\bigl(\sfrac\delta t\bigr)\Bigr|  
\leq \eta_\delta \Bigl(\delta +t \,\psi\bigl(\sfrac \delta t\bigr)\Bigr).$$
\end{lemma}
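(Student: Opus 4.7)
The plan is to reduce the computation to the explicit Legendre--Fenchel transform of $\xi$ by pulling the factors $s^{\pm\beta}$ out of the integral defining $\Lam_{u,v}$, and to track how the resulting error propagates under the Legendre transform. Set $\beta:=\alpha/(1-\alpha)$. Since $s\mapsto s^{\pm\beta}$ are uniformly continuous on the compact interval $[x_0,x_1]\subset(0,\infty)$, there exists an increasing function $\eta'_\delta\dto 0$ such that $|(s/w)^{\pm\beta}-1|\le\eta'_\delta$ uniformly for all $[u,v]\subset[x_0,x_1]$ with $v-u\le\delta$ and all $s,w\in[u,v]$. This is the basic quantitative tool.

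First I would establish a pointwise two-sided approximation
$$\Lam_{u,v}(\zeta)=w^\beta\,\delta\,\xi(\zeta w^{-\beta})+E_\delta(\zeta),\qquad |E_\delta(\zeta)|\le\eta''_\delta\bigl(w^\beta\delta\,|\xi(\zeta w^{-\beta})|+\delta\bigr),$$
valid on the effective domain $\zeta<w^\beta$ of $\Lam_{u,v}$. One bounds $|s^\beta\xi(\zeta s^{-\beta})-w^\beta\xi(\zeta w^{-\beta})|$ by splitting it into a multiplicative part from replacing $s^\beta$ by $w^\beta$ and a compositional part from the shift of the argument of $\xi$; the latter is controlled using $\xi'(r)=1/(1-r)$ after distinguishing the subcases $\zeta w^{-\beta}$ bounded away from $1$ and $\zeta w^{-\beta}$ close to $1$. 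Integration over $s\in[u,v]$ yields the claimed bound; the additive $\delta$ absorbs a universally bounded contribution near $\zeta=0$, where $|\xi|$ may itself be small.

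Second, I would compute the Legendre--Fenchel transform of the leading term. With the substitution $\eta=\zeta w^{-\beta}$,
$$\sup_{\eta<1}\bigl[tw^\beta\eta-w^\beta\delta\,\xi(\eta)\bigr]=w^\beta\,\delta\,\xi^*(t/\delta),$$
and a direct calculation using $\xi^*(r)=r-1-\log r$ gives $\delta\,\xi^*(t/\delta)=t-\delta+\delta\log(\delta/t)=t\,\psi(\delta/t)$, producing the claimed leading term $w^\beta\,t\,\psi(\delta/t)$. The optimizer is $\eta^*=1-\delta/t$, i.e. $\zeta^*=w^\beta(1-\delta/t)$.

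Finally I would transfer the pointwise estimate on $\Lam_{u,v}$ into a two-sided estimate on $\Lam^*_{u,v}$. Taking the supremum in $\zeta$ of the upper and lower envelopes from the first step, and concentrating it near $\zeta^*$, the residual error is bounded by $\eta''_\delta\bigl(w^\beta\delta\,|\log(t/\delta)|+\delta\bigr)$. Verifying separately in the regimes $t\le\delta$ and $t\ge\delta$ shows this is itself bounded by $\eta_\delta\bigl(\delta+t\,\psi(\delta/t)\bigr)$, where $\eta_\delta\dto 0$ absorbs the bounded factor $w^\beta\le x_1^\beta$. The main obstacle is the regime $t\gg\delta$, in which $\eta^*=1-\delta/t$ approaches the boundary of the domain of $\xi$: one must verify that the compositional error remains of relative order $\eta'_\delta$ despite $|\xi(\eta^*)|=\log(t/\delta)$ diverging. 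This is possible because $\eta^*$ lies at distance $\delta/t$ from the boundary while the perturbation $(s/w)^\beta-1$ is only of order $\eta'_\delta$, so the two effects decouple provided $\eta'_\delta$ is sufficiently small.
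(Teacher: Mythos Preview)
Your outline has the right shape---approximate $\Lambda_{u,v}$, transform, control the error---but the first step fails as stated and this undermines the rest. The claimed two-sided bound
\[
\bigl|\Lambda_{u,v}(\zeta)-w^\beta\delta\,\xi(\zeta w^{-\beta})\bigr|\le \eta''_\delta\bigl(w^\beta\delta\,|\xi(\zeta w^{-\beta})|+\delta\bigr)
\]
cannot hold uniformly on the effective domain. Take $w=u$ and let $\zeta\uparrow u^\beta$. Then $\xi(\zeta u^{-\beta})\to\infty$, so the reference term $u^\beta\delta\,\xi(\zeta u^{-\beta})$ diverges; but $\Lambda_{u,v}(\zeta)\to\Lambda_{u,v}(u^\beta)<\infty$, since the integrand has only an integrable logarithmic singularity at $s=u$. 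Hence $|E_\delta(\zeta)|/(u^\beta\delta\,\xi(\zeta u^{-\beta}))\to 1$, forcing $\eta''_\delta\ge 1$. The underlying reason is that $\xi$ is singular at $1$, so a multiplicative perturbation of its argument by $(s/w)^\beta$ is \emph{not} a relative perturbation of $\xi$ near the boundary. Your final paragraph tries to handle this by saying the optimizer sits at distance $\delta/t$ from the boundary while the perturbation is of size $\eta'_\delta$, so the effects ``decouple''; but $\eta'_\delta$ depends only on $\delta$, whereas $\delta/t$ can be made arbitrarily small by taking $t$ large, so the claimed decoupling is illusory. Moreover, for the upper bound on $\Lambda^*_{u,v}(t)$ you need a lower bound on $\Lambda_{u,v}(\zeta)$ valid for \emph{all} $\zeta$, not just near your $\zeta^*$, so localising near the optimizer is circular.

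The paper's proof sidesteps the singularity of $\xi$ by not attempting a symmetric pointwise approximation. Instead it uses monotonicity to sandwich $\Lambda_{u,v}(\theta)$ between $\delta\,u^\beta\xi(\theta v^{-\beta})$ and $\delta\,v^\beta\xi(\theta u^{-\beta})$ (with the inequalities reversed for $\theta\le 0$), whose Legendre transforms are explicit multiples of $\xi^*$ at the argument $(v/u)^{\pm\beta}\,t/\delta$. The comparison then lives entirely on the $\xi^*$ side, and the crucial input is the auxiliary estimate
\[
|\xi^*(\zeta t)-\xi^*(t)|\le 2|\zeta-1|+|\log\zeta|+2|\zeta-1|\,\xi^*(t),
\]
which \emph{does} give a uniform relative bound because $\xi^*(t)=t-1-\log t$ has no singularity (only linear growth at $\infty$ and logarithmic at $0$). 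This is exactly the step that replaces your problematic ``compositional error'' for $\xi$. Your identification $\delta\,\xi^*(t/\delta)=t\,\psi(\delta/t)$ in step~2 is correct and is also how the paper converts the answer into the stated form.
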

\medskip

We now extend the definition of $\Lam^*$ continuously by setting, for any~$u\geq 0$ and $t \geq 0$,
$$\Lam^*_{u,u}(t)=  u^{\frac{\alpha}{1-\alpha}} \,t .$$
For the proof of Lemma~\ref{simplJ} we use the following fact, which can be verified easily.

\begin{lemma}\label{le0821-1}
For any $\zeta>0$ and $t>0$, we have
$|\xi^*(\zeta t)-\xi^*(t)| \leq 2|\zeta-1|+|\log \zeta| +2|\zeta-1|
\xi^*(t).$
\end{lemma}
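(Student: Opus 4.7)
The plan is to exploit the explicit formula $\xi^*(t)=t-1-\log t$ for $t>0$ and do a straight calculation. First I would write
\[
\xi^*(\zeta t)-\xi^*(t)=(\zeta t-1-\log(\zeta t))-(t-1-\log t)=(\zeta-1)\,t-\log\zeta,
\]
and apply the triangle inequality to obtain
\[
|\xi^*(\zeta t)-\xi^*(t)|\le |\zeta-1|\,t+|\log\zeta|.
\]
The claimed bound then follows once I can replace the factor $t$ multiplying $|\zeta-1|$ by $2(1+\xi^*(t))$.

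So the remaining step is the elementary inequality $t\le 2\bigl(1+\xi^*(t)\bigr)$, valid for all $t>0$. Substituting the definition of $\xi^*$, this reduces to $2\log t\le t$, which is classical: the function $g(t)=t-2\log t$ has derivative $1-2/t$, so its only critical point on $(0,\infty)$ is $t=2$, at which $g(2)=2-2\log 2>0$; since $g(t)\to\infty$ at both endpoints, $g>0$ throughout. Multiplying the bound $t\le 2+2\xi^*(t)$ by $|\zeta-1|$ and adding $|\log\zeta|$ yields
\[
|\xi^*(\zeta t)-\xi^*(t)|\le 2|\zeta-1|+|\log\zeta|+2|\zeta-1|\,\xi^*(t),
\]
as required. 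There is no genuine obstacle here; the lemma is a direct computation together with the standard one-variable estimate $2\log t\le t$, and the paper's claim that it is ``easily verified'' is accurate.
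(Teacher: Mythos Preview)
Your proof is correct. The paper does not supply a proof of this lemma, merely stating that it ``can be verified easily,'' and your direct computation---writing $\xi^*(\zeta t)-\xi^*(t)=(\zeta-1)t-\log\zeta$ and then invoking the elementary bound $t\le 2(1+\xi^*(t))$, equivalently $2\log t\le t$---is exactly the kind of straightforward verification the authors had in mind.
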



\begin{proof}[ of Lemma~\ref{simplJ}]
First observe that
$$\gamma_\delta:=\suptwo{x_0<u<v<x_1}
{v-u\leq \delta} (v/u)^{\frac{\alpha}{1-\alpha}}$$ 
tends to $1$ as $\delta$ tends to zero. 
By Lemma \ref{le0821-1}, there exists a function $(\bar \eta_\delta)_{\delta>0}$
with $\lim_{\delta\dto0}\bar \eta_\delta =0$ such that for all
$\zeta\in [1/\gamma_\delta,\gamma_\delta]$ and $t>0$
$$|\xi^*(\zeta t)-\xi^*(t)|\leq \bar\eta_\delta (1+\xi^*(t)) .$$
Consequently, one has for any $\delta>0$, $x_0<w,\bar w<x_1$ with
$|w-\bar w|\leq \delta$ and $\zeta\in
[1/\gamma_\delta,\gamma_\delta]$ that
\begin{align*}
|\bar w^{\frac{\alpha}{1-\alpha}} \xi^*(\zeta t)-w^{\frac{\alpha}{1-\alpha}} \xi^*(t)|&\leq \bar w^{\frac{\alpha}{1-\alpha}} |\xi^*(\zeta t)-\xi^*(t)| + \xi^*(t) |\bar w^{\frac{\alpha}{1-\alpha}}-w^{\frac{\alpha}{1-\alpha}}|\\
&\leq c \bar \eta_\delta (1+\xi^*(t))+ c \delta \xi^*(t),
\end{align*}
where $c<\infty$ is a constant only depending on $x_0,x_1$ and
$\alpha$. Thus for an appropriate function
$(\eta_\delta)_{\delta>0}$ with $\lim_{\delta\dto0} \eta_\delta=0$
one gets
\begin{align}\label{eq0815-1}
|\bar w^{\frac{\alpha}{1-\alpha}} \xi^*(\zeta
t)-w^{\frac{\alpha}{1-\alpha}} \xi^*(t)| \leq \eta_\delta
(1+\xi^*(t)).
\end{align}
Fix $x_0<u<v<x_1$ and set $\delta:=v-u$. We estimate,  for $\th\geq0$,
$$ \delta u^{\frac{\alpha}{1-\alpha}} \xi(\th v^{-\alpha/(1-\alpha)}) \leq
\Lam_{u,v}(\th) \leq \delta v^{\frac{\alpha}{1-\alpha}} \xi(\th u^{-\alpha/(1-\alpha)}),$$
and the reversed inequalities for $\th\leq 0$. Consequently,
\begin{align*}
\Lam^*_{u,v}(\delta t)&=\sup_{\th}[\th t-\Lam_{u,v}(\th)]\\
&\leq \delta \sup_\th [\th t-u^{\frac{\alpha}{1-\alpha}} \xi(\th
v^{-\alpha/(1-\alpha)})]
 \vee \delta \sup_\th [\th t-v^{\frac{\alpha}{1-\alpha}} \xi(\th u^{-\alpha/(1-\alpha)})]\\
&= \delta u^{\frac{\alpha}{1-\alpha}}
\xi^*((v/u)^{\frac{\alpha}{1-\alpha}} t)\vee \delta
v^{\frac{\alpha}{1-\alpha}} \xi^*((u/v)^{\frac{\alpha}{1-\alpha}} t).
\end{align*}
Since $(v/u)^{\alpha/(1-\alpha)}$ and $(u/v)^{\alpha/(1-\alpha)}$
lie in $[1/\gamma_\delta,\gamma_\delta]$ we conclude with
(\ref{eq0815-1}) that for $w\in[u,v)$
$$ \Lam^*_{u,v}(\delta t)\leq w^{\frac{\alpha}{1-\alpha}} \xi^*(t)
\delta + \eta_\delta (1+ \xi^*(t))\delta.$$
To prove the converse inequality, observe
\begin{align*}
\Lam^*_{u,v}(t) &\geq \Big( \delta \sup_{\th\le0} [\th
t-u^{\frac{\alpha}{1-\alpha}} \xi(\th v^{-\alpha/(1-\alpha)})] \Big) \vee
\Big( \delta \sup_{\th\ge0} [\th t-v^{\frac{\alpha}{1-\alpha}} \xi(\th
u^{-\alpha/(1-\alpha)})] \Big).
\end{align*}
Now note that the first partial Legendre transform can be replaced
by the full one if $t\leq (u/v)^{\alpha(1-\alpha)}$. Analogously,
the second partial Legendre transform can be replaced if $t\geq
(v/u)^{\alpha(1-\alpha)}$. Thus we can proceed as above if $t\not\in
(1/\gamma_\delta,\gamma_\delta)$ and conclude that
$$\Lam^*_{u,v}(t)\geq w^{\frac{\alpha}{1-\alpha}} \xi^*(t) \delta -
\eta_\delta (1+ \xi^*(t))\delta.$$
The latter estimate remains valid on $(1/\gamma_\delta,\gamma_\delta)$ if
$x_1^{\alpha/(1-\alpha)} (\xi^*(1/\gamma_\delta)\vee
\xi^*(\gamma_\delta)) \leq \eta_\delta.$
Since $\gamma_\delta$ tends to $1$ and $\xi^*(1)=0$ one can make
$\eta_\delta$ a bit larger to ensure that the latter estimate is
valid and $\lim_{\delta\dto0} \eta_\delta =0$. This establishes the
statement.
\end{proof}

As the next step in the proof of Theorem~\ref{LDP} 
we formulate a finite-dimensional large deviation principle, which can be derived from
Lemma~\ref{occuLDP}.

\begin{lemma}
Fix $0=t_0<t_1<\cdots<t_p$. Then the vector
$$\Big( \sfrac1{\kappa} Z_{\kappa t_j}  \, :\, j\in\{1,\ldots,p\}\Big)$$
satisfies a large deviation principle in $\{0\leq a_1 \leq \cdots \leq a_p\} \subset \IR^p$ 
with speed $a_\kappa$ and rate function
$$J(a_1,\ldots,a_p)=\sum_{j=1}^p \Lambda^*_{a_{j-1},a_j}(t_{j}-t_{j-1}), \qquad \mbox{ with }a_0:=0\, .$$
\end{lemma}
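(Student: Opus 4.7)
My plan is to derive this finite-dimensional LDP from Lemma \ref{occuLDP} by exploiting the independence of occupation times across disjoint level intervals, and then transferring from the occupation-time picture to the level-at-time picture via a sandwich on a suitably refined partition.

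First, for any partition $0=b_0<b_1<\cdots<b_q$, the random variables $T[\kappa b_{k-1},\kappa b_k)$ are mutually independent, being built from disjoint sub-collections of the independent exponentials $(S_j)_{j\ge 0}$. Lemma \ref{occuLDP} gives an LDP at speed $(a_\kappa)$ with rate $\Lam^*_{b_{k-1},b_k}$ for each coordinate, and the standard product principle at a common speed (which follows, e.g., directly from G\"artner--Ellis applied to the separable log moment generating function) yields a joint LDP for $(\sfrac1\kappa T[\kappa b_{k-1},\kappa b_k))_{k=1}^q$ in $\IR_+^q$ with rate function $(s_1,\ldots,s_q)\mapsto \sum_k \Lam^*_{b_{k-1},b_k}(s_k)$.

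To obtain the upper bound for $(\sfrac1\kappa Z_{\kappa t_j})_{j=1}^p$, I would fix $\eps>0$ and apply the above to the refined $2p$-point partition $0<a_1-\eps<a_1+\eps<\cdots<a_p-\eps<a_p+\eps$. Writing $\tau_k$ for the scaled occupation times and $\sigma_k=\tau_1+\cdots+\tau_k$ for cumulative times, the event $\{\sfrac1\kappa Z_{\kappa t_j}\in(a_j-\eps,a_j+\eps)\text{ for all }j\}$ reduces, up to negligible discreteness from the $\IS$-valuedness of $Z$, to the closed set $\{\sigma_{2j-1}\le t_j\le\sigma_{2j}\text{ for all }j\}$ in $\IR_+^{2p}$. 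By convexity of $\Lam^*$ together with $\Lam^*_{u,v}(v-u)=0$, the infimum of $\sum_k \Lam^*_{b_{k-1},b_k}(\tau_k)$ on this set is attained with the thin-interval times $\tau_{2j}\approx 2\eps$ (contributing vanishing mass) and the wide-interval times $\tau_{2j-1}=s_j$ satisfying $s_1+\cdots+s_j=t_j+o_\eps(1)$, giving $\sum_j \Lam^*_{a_{j-1}+\eps,a_j-\eps}(t_j-t_{j-1})+o_\eps(1)$. Sending $\eps\downarrow 0$ and invoking the endpoint continuity of $\Lam^*_{u,v}(t)$ established in Lemma \ref{simplJ} yields the upper bound $\sum_j \Lam^*_{a_{j-1},a_j}(t_j-t_{j-1})$. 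For the matching lower bound, given an open set $G\ni a^*=(a_1^*,\ldots,a_p^*)$ with $J(a^*)<\infty$, I pick a ball $B_\delta(a^*)\subset G$; the event $\{\sfrac1\kappa T[\kappa a_{j-1}^*,\kappa a_j^*)\in (t_j-t_{j-1}-\delta',t_j-t_{j-1}+\delta')\text{ for all }j\}$ implies $(\sfrac1\kappa Z_{\kappa t_j})_j\in B_\delta(a^*)$ for $\delta'$ small enough, and by the joint LDP of the first step applied to the partition $0<a_1^*<\cdots<a_p^*$ this event has probability at least $\exp(-a_\kappa(J(a^*)+o(1)))$.

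The main technical obstacle is identifying the constrained infimum in the upper-bound step and showing it converges to $\sum_j \Lam^*_{a_{j-1},a_j}(t_j-t_{j-1})$ as the sandwich closes; both the convexity-based identification of the minimizer and the control of the limiting behavior as $\eps\downarrow 0$ are precisely what the continuity and asymptotic estimates of Lemma \ref{simplJ} are designed to provide.
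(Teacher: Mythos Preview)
Your overall strategy---establish the joint LDP for the independent occupation times and then transfer to the levels---matches the paper's approach, and the upper bound sketch via the refined $2p$-point partition, while more laborious than the paper's direct factorisation through the strong Markov property, is in principle workable (the paper likewise finishes with continuity of $(u,v)\mapsto\Lam^*_{u,v}(t)$ and exponential tightness).

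There is, however, a genuine gap in your lower bound. The implication you assert fails: if all increments $\sfrac1\kappa T[\kappa a_{j-1}^*,\kappa a_j^*)$ are within $\delta'$ of $t_j-t_{j-1}$, then the cumulative hitting times $\sfrac1\kappa T[0,\kappa a_j^*)$ are within $j\delta'$ of $t_j$, but this only pins down $\sfrac1\kappa Z_{\kappa t_j}$ to the interval $[a_{j-1}^*,a_{j+1}^*)$, not to $B_\delta(a_j^*)$. Nothing in your event prevents the process from being, say, only barely past $a_{j-1}^*$ at time $\kappa t_j$ while still arriving at $a_j^*$ by time $\kappa(t_j+j\delta')$. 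No choice of $\delta'$ repairs this, because the gaps $a_j^*-a_{j-1}^*$ are fixed. The paper handles this by intersecting with the auxiliary events $\{\sfrac1\kappa T[a_j\kappa,(a_j+\eps)\kappa)\ge\delta\}$, whose cost at speed $a_\kappa$ can be made arbitrarily small by Lemma~\ref{occuLDP}; these events force the process to linger near $a_j^*$ and thereby localise $\sfrac1\kappa Z_{\kappa t_j}$ to $[a_j,a_j+\eps)$. You need an analogous device---either these waiting-time events, or adding the points $a_j^*+\eps$ to your lower-bound partition and running the same sandwich you used for the upper bound.
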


\begin{proof}
First fix $0=a_0<a_1< \cdots <a_p$.
Observe that, 
whenever $s_{j-1}<s_j$ with $s_0=0$,
$$\begin{aligned}
\IP\big( \sfrac1\kappa  \, Z_{\kappa t_j} & \geq a_j > \sfrac1\kappa  \, Z_{\kappa s_j} 
\mbox{ for } j\in\{1,\ldots,p\} \big) \\
& \geq \IP\big( s_j-s_{j-1} < \sfrac1\kappa T{[a_{j-1}\kappa, a_j\kappa)}  \leq t_j-t_{j-1}
\mbox{ for } j\in\{1,\ldots,p\} \big).
\end{aligned}
$$
Moreover, supposing that $0<t_j-t_{j-1}-(s_j-s_{j-1})\leq \delta$ for a $\delta>0$, we obtain
\begin{align*}
\IP\big( a_j\leq \sfrac1\kappa &Z_{\kappa t_j} <a_j+\eps \mbox{ for } j\in\{1,\ldots,p\} \big)\\
&\geq \IP(\sfrac1\kappa  \, Z_{\kappa t_j}  \geq a_j > \sfrac1\kappa  \, Z_{\kappa s_j} 
\text{ and } T[a_j\kappa,(a_j+\eps)\kappa) \geq \delta \mbox{ for } j\in\{1,\ldots,p\}\big)
\end{align*}

By Lemma~\ref{occuLDP}, given $\eps>0$ and $A>0$, we find $\delta>0$ such that, for $\kappa$ large,
$$\IP\big( \sfrac1\kappa T{[a_{j}\kappa, (a_j+\eps)\kappa)}  < \delta\big) \leq e^{-A a_\kappa}.$$
Hence, for sufficiently small $\delta$ we get with the above estimates that
$$\begin{aligned} 
\liminf_{\kappa\to\infty} \frac1{a_\kappa}&\log \IP\big( a_j+\eps > \sfrac1\kappa Z_{\kappa t_j} \geq a_j \mbox{ for } j\in\{1,\ldots,p\}\big)\\
&\geq \liminf_{\kappa\to\infty} \frac1{a_\kappa}\log \IP\big( s_j-s_{j-1} < \sfrac1\kappa T{[a_{j-1}\kappa, a_j\kappa)}  \leq t_j-t_{j-1}
\mbox{ for } j\in\{1,\ldots,p\} \big)\\
&\geq -\sum _{j=1}^p \Lam^*_{a_{j-1}, a_j}(t_j-t_{j-1}).
\end{aligned}$$

Next, we prove the upper bound. Fix $0=a_0\leq \dots \leq a_p$ and $0=b_0\leq \dots \leq b_p$ with $a_j<b_j$, 
and observe that by the strong Markov property of $(Z_t)$,
\begin{align*}
\IP\big( b_j > & \sfrac1\kappa  \, Z_{\kappa t_j} \geq a_j \mbox{ for } j\in\{1,\ldots,p\} \big)\\
&=  \prod_{j=1}^p \IP\big( b_j >  \sfrac1\kappa  \, Z_{\kappa t_j} \geq a_j\,\big|\, b_i >  \sfrac1\kappa  \, Z_{\kappa t_i} \geq a_i \mbox{ for } i\in\{1,\ldots,j-1\} \big)\\
&\leq \prod_{j=1}^p \IP\big( \sfrac1\kappa T[b_{j-1} \kappa,a_j \kappa) < t_j-t_{j-1}\leq \sfrac1\kappa T[a_{j-1}\kappa,b_j \kappa) \big).
\end{align*} 
Consequently,
$$
\limsup_{\kappa\uparrow\infty}
\frac1{a_\kappa} \log\IP\big(b_j >  \sfrac1\kappa  \, Z_{\kappa t_j} \geq a_j \mbox{ for } j\in\{1,\ldots,p\} \big) \leq -\sum_{j=1}^p r_j,
$$
where 
$$
r_j=\begin{cases}
\Lam^*_{b_{j-1},a_j} (t_j-t_{j-1}) \quad & \text{ if }a_j-b_{j-1}\geq t_j-t_{j-1},\\
\Lam^*_{a_{j-1},b_j} (t_j-t_{j-1}) & \text{ if }b_j-a_{j-1}\leq t_j-t_{j-1},\\
0,& \text{ otherwise.}
\end{cases}
$$
Using the continuity of $(u,v)\mapsto \Lam^*_{u,v}(t)$ for fixed $t$, it is easy to verify continuity of each $r_j$ of the parameters $a_{j-1}$, $a_j$, $b_{j-1}$, and $b_j$. Suppose now that  $(a_j)$ and $(b_j)$ are taken from a predefined compact subset of $\IR^d$. Then we have
$$
\sum_{j=1}^p \big | r_j- \Lam^*_{a_{j-1},a_j}(t_j-t_{j-1})\big |\leq \vth \big(\max\{b_j-a_j:j=1,\dots,p\}\big),
$$ 
for an appropriate function $\vth$ with $\lim_{\delta\dto0}\vth(\delta)=0$. Now the upper bound 
follows with an obvious exponential 
tightness argument.
\end{proof}

We can now prove a large deviation principle in a weaker topology, by taking a projective limit 
and simplifying the resulting rate function with the help of Lemma~\ref{simplJ}.
\smallskip

\begin{lemma}\label{weakldp}
On the space of increasing functions with the topology of pointwise convergence
the family of functions
$$\Big( \sfrac1{\kappa} Z_{\kappa t} \colon t\geq 0 \Big)_{\kappa>0}$$
satisfies a large deviation principle with speed~$(a_\kappa)$ and rate function~$J$.
\end{lemma}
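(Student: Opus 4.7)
The plan is to apply the Dawson--Gärtner projective limit theorem (\cite[Thm.~4.6.1]{DemZei98})
to the finite-dimensional LDP of the previous lemma. This upgrades it to an LDP on
$\IR^{[0,\infty)}$ endowed with the pointwise convergence topology, at speed $(a_\kappa)$ and
with rate function
$$\tilde J(x):=\sup_\cT \sum_{j=1}^p \Lam^*_{x(t_{j-1}),x(t_j)}(t_j-t_{j-1}),$$
the supremum running over all finite partitions $\cT=\{0=t_0<\cdots<t_p\}$, with the
convention $x(t_0)=0$. Since $\Lam^*_{u,v}(t)=\infty$ whenever $u>v$, the function
$\tilde J$ is infinite outside $\cI[0,\infty)$, and the projective-limit LDP restricts to
that closed subspace. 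What remains is to identify $\tilde J$ with the integral rate
function $J$ of Theorem~\ref{LDP}.

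For $\tilde J(x)\le J(x)$ on absolutely continuous $x$, I would use Lemma~\ref{simplJ} with
$w=x(t_{j-1})$ to bound each summand of $J_\cT(x)$ by
$x(t_{j-1})^{\alpha/(1-\alpha)}(t_j-t_{j-1})\,\psi\bigl((x(t_j)-x(t_{j-1}))/(t_j-t_{j-1})\bigr)$
up to an error vanishing with the mesh. Since $\psi(y)=1-y+y\log y$ is convex, Jensen's
inequality on $[t_{j-1},t_j]$ bounds this further by
$x(t_{j-1})^{\alpha/(1-\alpha)}\int_{t_{j-1}}^{t_j}\psi(\dot x_s)\,ds$, which on a fine
partition is close to $\int_{t_{j-1}}^{t_j} x_s^{\alpha/(1-\alpha)}\psi(\dot x_s)\,ds$.
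Summing and passing to the supremum over $\cT$ then yields $\tilde J(x)\le J(x)$; the bound
is trivial when $x$ is not absolutely continuous, where $J(x)=\infty$ by convention.

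For the reverse direction with absolutely continuous $x$, I would truncate to $[0,T]$
(justified by $J(x)<\infty$), approximate the truncated integral from below by Riemann-type
sums $\sum x(w_j)^{\alpha/(1-\alpha)}(t_j-t_{j-1})\psi\bigl((x(t_j)-x(t_{j-1}))/(t_j-t_{j-1})\bigr)$
along partitions of vanishing mesh, and use the lower bound in Lemma~\ref{simplJ} to
transfer this into a lower bound on $J_\cT(x)$. When $x$ is not absolutely continuous the
superlinear growth $\psi(y)\sim y\log y$ is the key: for a jump of size $\delta$ one bracket
of width $\eps$ around the jump contributes $\approx \delta\log(\delta/\eps)\uparrow\infty$
as $\eps\dto 0$, and for singular-continuous mass one concentrates partition points on a thin
Lebesgue set carrying it, forcing difference quotients---and hence $\psi$---to blow up.

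I expect the main obstacle to be this last case: one must choose the partition concentrating
on the singular support quantitatively enough that the blow-up of $\psi$ along the chosen
subintervals overwhelms the bounded prefactor $x(\cdot)^{\alpha/(1-\alpha)}\le
x(T)^{\alpha/(1-\alpha)}$, while keeping the partition finite. Once $\tilde J=J$ has been
established, the good-rate-function property should follow by standard arguments from
Helly's selection theorem on nondecreasing functions together with the superlinearity of
$\psi$, yielding compactness of sublevel sets in the pointwise topology.
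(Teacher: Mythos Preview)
Your approach is essentially the paper's: Dawson--G\"artner to obtain $\tilde J$, then Lemma~\ref{simplJ} to replace $\Lambda^*$ by the $\psi$-sums, then Jensen for $\tilde J\le J$ and a differentiation argument for $\tilde J\ge J$. Two points of comparison are worth making.

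First, for the non-absolutely-continuous case the paper does \emph{not} route through Lemma~\ref{simplJ} and the superlinearity of $\psi$. Instead it works directly with the dual: writing $\Lambda^*_{u,v}(t)=\sup_\zeta[\zeta t-\Lambda_{u,v}(\zeta)]$ and using the explicit form $\Lambda_{u,v}(\zeta)=\int_u^v s^{\alpha/(1-\alpha)}\xi(\zeta s^{-\alpha/(1-\alpha)})\,ds$, one picks a single parameter $\lambda$ on the offending intervals and lets $\lambda\to\infty$ after the mesh has gone to zero. This is cleaner than your proposal because Lemma~\ref{simplJ} is stated only for $u,v$ in a compact subset of $(0,\infty)$; if the singularity of $x$ sits where $x$ first leaves zero (say $x\equiv 0$ on $[0,t_0)$ and $x(t_0+)>0$), your bracket has $x(t_{j-1})=0$ and the lemma does not apply as written. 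The dual argument needs no such restriction, since $\Lambda_{0,v}$ is perfectly well defined. Your idea can be salvaged by first showing the singular part must have mass on some interval where $x$ is bounded away from zero, but the paper's route avoids the case analysis.

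Second, for the lower bound $\tilde J\ge J$ on absolutely continuous $x$, the paper uses Lebesgue's differentiation theorem combined with Fatou rather than a Riemann approximation: define $x^n(t)$ as the average of $\dot x$ over the mesh cell containing $t$, note $x^n(t)\to\dot x_t$ a.e., and apply Fatou to $\int x_t^{\alpha/(1-\alpha)}\psi(x^n(t))\,dt$. This is slightly more robust than Riemann sums since $\dot x$ need not be Riemann integrable. Your remark on Helly's theorem is not needed here; goodness of $J$ is established separately (Lemma~\ref{Jgood}) in the stronger topology of ${\mathcal I}[0,\infty)$.
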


\begin{proof}
Observe that the space of increasing functions equipped with the topology of
pointwise convergence can be interpreted as projective limit of the spaces 
$\{0\leq a_1 \leq \cdots \leq a_p\}$ with the canonical projections given by
$\pi(x)=(x(t_1),\ldots,x(t_p))$ for $0<t_1<\ldots<t_p$. By the
Dawson-G\"artner theorem, we obtain a large deviation principle with
good rate function
$$\tilde J(x) = \sup_{0<t_1<\ldots<t_p} 
\sum_{j=1}^p \Lambda^*_{x(t_{j-1}),x(t_j)}(t_{j}-t_{j-1}).$$
Note that the value of the variational expression is nondecreasing,
if additional points are added to the partition. It is not hard to see
that $\tilde J(x)=\infty$, if $x$ fails to be 
absolutely continuous.
 
Indeed, there exists $\delta>0$ and, for every $n\in\IN$, a partition
$\delta\leq s_1^n<t_1^n \leq\cdots\leq s_n^n<t^n_n\leq \frac1\delta$ such that
$\sum_{j=1}^n t^n_j-s^n_j \to 0$ but $\sum_{j=1}^n x(t^n_j)-x(s^n_j) \geq \delta$.
Then, for any $\lam>0$,
$$\begin{aligned}
\tilde J(x) & = \sup_{\heap{0<t_1<\ldots<t_p}{\lambda_1, \ldots, \lambda_p\in\IR}} 
\sum_{j=1}^p \lambda_j\big(t_j-t_{j-1}\big)  -  \Lambda_{x(t_{j-1}),x(t_j)}(\lambda_{j})\\
& \geq 
\sum_{j=1}^n \Big[-\lambda \big(t^n_j- s^n_j \big) + \int_{x(s^n_j)}^{x(t^n_{j})} u^{\frac\alpha{1-\alpha}}
\log\big( 1+\lambda u^{\frac{-\alpha}{1-\alpha}}\big) \, du \Big] \\
&\geq -\lam \sum_{j=1}^n \big(t^n_j- s^n_j \big) +\delta^{\frac1{1-\alpha}} \,\log\big( 1+\lambda \delta^{\frac{\alpha}{1-\alpha}}\big)
\longrightarrow \delta^{\frac1{1-\alpha}} \log\big( 1+\lambda \delta^{\frac{\alpha}{1-\alpha}}\big),
\end{aligned}$$
which can be made arbitrarily large by choice of~$\lam$.

From now on suppose that $x$ is absolutely continuous.
The remaining proof is based on the equation 
\begin{align}\label{eq:approx_J}
\tilde J(x) &  = \sup_{0<t_1<\ldots<t_p}  \sum_{j=1}^p \big(t_{j}-t_{j-1}\big)\, x(t_{j})^{\frac{\alpha}{1-\alpha}} \,
\psi\Big(\frac{x(t_{j})-x(t_{j-1})}{t_j-t_{j-1}}\Big).
\end{align}
Before we prove its validity we apply (\ref{eq:approx_J}) to derive the assertions of the lemma.
For  the \emph{lower bound} we choose a
scheme $0<t^n_1<\cdots <t_n^p$, with $p$ depending on~$n$, such that $t_n^p\to\infty$
and the mesh goes to zero. Define, for $t^n_{j-1}\leq t< t^n_j$, 
$$x^n_j(t)= \frac1{t^n_j-t^n_{j-1}} \, \int_{t^n_{j-1}}^{t^n_j} {\dot x}_s \, ds = 
\frac{x(t^n_{j})-x(t^n_{j-1})}{t^n_j-t^n_{j-1}}.$$
Note that, by Lebesgue's theorem, $x^n_j(t) \to {\dot x}_t$ almost everywhere. Hence
$$\begin{aligned}
\tilde J(x) & \geq \liminf_{n\to\infty} \int_0^{t^n_p} x_t^{\frac{\alpha}{1-\alpha}} \,
\psi\big(x^n_j(t)\big) \, dt 
\geq \int_0^\infty x_t^{\frac{\alpha}{1-\alpha}} \, \liminf_{n\to\infty}
\psi\big(x^n_j(t)\big) \, dt = J(x). 
\end{aligned}$$

For the \emph{upper bound} we use the convexity of $\psi$ to obtain
$$\begin{aligned}
\psi\Big(\frac{x(t_{j})-x(t_{j-1})}{t_j-t_{j-1}}\Big) 
= \psi\Big(\frac{1}{t_{j}-t_{j-1}} \int_{t_{j-1}}^{t_j} {\dot x}_t \, dt\Big) 
\leq \frac{1}{t_{j}-t_{j-1}} \int_{t_{j-1}}^{t_j} \psi\big({\dot x}_t\big) \, dt.
\end{aligned}$$
Hence
$$\begin{aligned}
\tilde J(x) & \leq \sup_{0<t_1<\ldots<t_p} \sum_{j=1}^p  x(t_{j})^{\frac{\alpha}{1-\alpha}} \,
\int_{t_{j-1}}^{t_j} \psi\big({\dot x}_t\big) \, dt= J(x)\, ,
\end{aligned}$$
as required to complete the proof.

It remains to prove (\ref{eq:approx_J}). We fix $t'$ and $t''$   with $t'<t''$ and $x(t')>0$, and partitions $t'=t_0^n<\dots<t_n^n=t''$ with $\delta_n:=\sup_{j} x(t_j^n)-x(t_{j-1}^n)$ converging to $0$. 
Assume $n$ is sufficiently large such that $\eta_{\delta_n}\leq \frac12 (t')^{\frac\alpha{1-\alpha}}$, 
with $\eta$ as in Lemma \ref{simplJ}. Then,
\begin{equation}\begin{aligned}\label{eq_bdd}
\sum_{j=1}^n & \Lam^*_{x(t^n_{j-1}),x(t^n_{j})}(t_j^n-t_{j-1}^n)\\
& \geq  \frac12 (t')^{\frac\alpha{1-\alpha}} \Bigl[\underbrace{\sum_{j=1}^n   (t_j^n-t_{j-1}^n) \,\psi\bigl(\frac{x(t_j^n)-x(t_{j-1}^n)}{ t_j^n-t_{j-1}^n}\bigr)}_{(*)} - (x(t'')-x(t'))\Bigr], 
\end{aligned}\end{equation}
and $(*)$ is uniformly bounded as long as $\tilde J(x)$ is finite. On the other hand also the finiteness of the right hand side of (\ref{eq:approx_J}) implies uniform boundedness of $(*)$. 
Hence, either both expressions in (\ref{eq:approx_J}) are infinite or we conclude with Lemma \ref{simplJ}  that for an appropriate choice of $t_j^n$,
\begin{align*}
\sup_{t'=t_0<\dots<t_p=t''} &  \sum_{j=1}^p  \Lam^*_{x(t_{j-1}),x(t_j)}\big(t_{j}-t_{j-1}\big)
= \lim_{n\to\infty} \sum_{j=1}^n  \Lam^*_{x(t_{j-1}^n),x(t_j^n)}\big(t_{j}^n-t_{j-1}^n\big)\\
&= \lim_{n\to\infty} \sum_{j=1}^n \big(t_{j}^n-t_{j-1}^n\big)\, x(t_{j}^n)^{\frac{\alpha}{1-\alpha}} \,
\psi\Big(\frac{x(t_{j}^n)-x(t_{j-1}^n)}{t_j^n-t_{j-1}^n}\Big)\\
&=\sup_{t'=t_0<\dots<t_p=t''} \sum_{j=1}^p \big(t_{j}-t_{j-1}\big)\, x(t_{j})^{\frac{\alpha}{1-\alpha}} \,
\psi\Big(\frac{x(t_{j})-x(t_{j-1})}{t_j-t_{j-1}}\Big).
\end{align*}
This expression easily extends to formula  (\ref{eq:approx_J}).
\end{proof}

\begin{lemma}\label{Jgood}
The level sets of~$J$ are compact in $\cI[0,\infty)$.
\end{lemma}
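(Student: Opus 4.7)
My plan to establish compactness of the level sets $\{x \in \mathcal{I}[0,\infty) : J(x) \leq L\}$ has three ingredients: a uniform growth bound, lower semicontinuity, and Helly-type sequential compactness.

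First I would obtain a uniform bound on $x(T)$ over the level set using Young's inequality for $\psi(s) = 1 - s + s\log s$ and its Legendre dual $\psi^*(\zeta) = e^\zeta - 1$. Writing $\beta := \alpha/(1-\alpha)$, the inequality $\zeta s \leq \psi(s) + e^\zeta - 1$ applied with $s = \dot x_t$ and integrated against the weight $x_t^\beta$ gives
$$\frac{\zeta}{\beta+1}\, x(T)^{\beta+1} = \zeta \int_0^T x_t^\beta \dot x_t\,dt \leq J(x) + (e^\zeta - 1)\int_0^T x_t^\beta\,dt \leq L + (e^\zeta - 1)\,T\, x(T)^\beta.$$
Choosing $\zeta=1$ yields a polynomial inequality $y^{\beta+1} \leq (\beta+1)[L + (e-1)Ty^\beta]$ for $y = x(T)$, which forces $x(T) \leq M(L,T) < \infty$ for every $x$ in the level set.

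Second, I would argue lower semicontinuity of $J$ with respect to uniform convergence on compacts. The identity $J = \tilde J$ on absolutely continuous $x$, together with $\tilde J(x) = \infty$ for non-absolutely-continuous $x$, was established in the proof of Lemma \ref{weakldp}. Since $\tilde J$ is the supremum over finite partitions of quantities $\sum_j \Lambda^*_{x(t_{j-1}), x(t_j)}(t_j - t_{j-1})$, and each $\Lambda^*_{u,v}(t)$ is itself a supremum of continuous functions of $(u,v,t)$, the functional $\tilde J$ is lower semicontinuous in the pointwise topology. A fortiori, $J$ is lower semicontinuous under uniform convergence on compacts.

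Finally, I would assemble these facts via Helly's selection theorem. Given a sequence $(x_n)$ in the level set, the uniform bounds of Step 1 and Helly's theorem produce a subsequence converging pointwise on $[0,\infty)$ to a nondecreasing $x$ with $x(0) = 0$. By lower semicontinuity, $J(x) \leq L$, so $x$ is absolutely continuous and in particular continuous. For monotone functions converging pointwise to a continuous limit on an interval, convergence is automatically uniform on compact subintervals (Pólya's theorem), so the subsequence converges in $\mathcal{I}[0,\infty)$. The level set is therefore sequentially compact, and since the uniform-on-compacts topology is metrizable, compact. The main technical obstacle is the boundedness bound in Step 1, where the correct choice of weight and conjugate exponent must interact cleanly with the degenerating factor $x_t^\beta$ near $t = 0$; the Young inequality circumvents this by integrating the weighted derivative directly rather than trying to bound $\dot x_t$ pointwise.
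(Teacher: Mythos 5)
Your proof is correct, but it takes a genuinely different route from the paper's. The paper works directly with the integral form of $J$: fixing $t$ and $\delta$, it isolates the subinterval $[t+\delta',t+\delta]$ on which $x_s\geq\frac12(x_{t+\delta}-x_t)$, applies Jensen's inequality to $\psi$, and extracts an explicit uniform modulus of continuity, $x_{t+\delta}-x_t\leq\max\bigl(2(J(x)/\log\delta^{-1/2})^{1-\alpha},\,2e\delta^{1/2}\bigr)$; equicontinuity plus $x_0=0$ then gives compactness via Arzel\`a--Ascoli. You instead get uniform boundedness from the Young/Legendre pairing $\zeta s\leq\psi(s)+e^\zeta-1$ integrated against the weight $x_t^{\alpha/(1-\alpha)}$ (a nice trick, and the identity $\int_0^T x_t^\beta\dot x_t\,dt=\frac1{\beta+1}x(T)^{\beta+1}$ is legitimate since the level set contains only absolutely continuous functions), and you replace equicontinuity by the Helly--P\'olya mechanism for monotone functions: pointwise subsequential limits exist, lower semicontinuity forces the limit into the level set and hence to be continuous, and monotone pointwise convergence to a continuous limit is automatically locally uniform. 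Two remarks. First, your justification of lower semicontinuity is slightly loose: the functions $(u,v,t)\mapsto t\zeta-\Lam_{u,v}(\zeta)$ are not continuous (they take the value $-\infty$ where $\Lam_{u,v}(\zeta)=\infty$), so the ``supremum of continuous functions'' argument needs care; however, lower semicontinuity of $\tilde J=J$ in the pointwise topology is already guaranteed by the Dawson--G\"artner theorem in Lemma~\ref{weakldp}, so this is not a gap. Second, the paper later uses the \emph{explicit} equicontinuity of the level sets (in the proof of the first LDP, to check $\sup_{J(x)\leq\eta}d(f_m(x),x)\to0$); your argument yields equicontinuity only a posteriori, via the converse of Arzel\`a--Ascoli applied to the compact level set, which suffices but is less quantitative.
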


\begin{proof}
We have to verify the assumptions of the Arzel\`a-Ascoli theorem. Fix  $\delta\in(0,1)$, $t\geq 0$, and a function 
$x\in \cI[0,\infty)$ with finite rate $J$. We choose $\delta'\in(0,\delta)$ with $x_{t+\delta '}=\frac12(x_t+x_{t+\delta})$, denote $\eps=x_{t+\delta}-x_t$, and observe that
\begin{align*}
J(x)&\geq \int_{t}^{t+\delta} x_s^{\frac\alpha{1-\alpha}} [1-\dot x_s+\dot x_s \log \dot x_s]\,ds\\
&\geq (\delta-\delta') \Bigl(\frac\eps2\Bigr)^{\frac\alpha{1-\alpha}} \int_{t+\delta '}^{t+\delta} [1-\dot x_s+\dot x_s \log \dot x_s]\,\frac{ds}{\delta-\delta'}.
\end{align*}
Here we used that $x_s\geq \eps/2$ for $s\in [t+\delta',t+\delta]$.
Next, we apply Jensen's inequality to the  convex function $\psi$ to deduce that
\begin{align*}
J(x)&\geq (\delta-\delta') \Bigl(\frac\eps2\Bigr)^{\frac\alpha{1-\alpha}} \psi\Bigl(\frac1{\delta-\delta'} \frac\eps 2\Bigr).
\end{align*}
Now assume that $\frac \eps2 \geq \delta$. Elementary calculus yields
$$
J(x)\geq \delta \Bigl(\frac\eps2\Bigr)^{\frac\alpha{1-\alpha}} \psi\Bigl(\frac1{\delta} \frac\eps 2\Bigr)\geq   \Bigl(\frac\eps2\Bigr)^{\frac1{1-\alpha}} \log\frac\eps{2e\delta}.
$$
If we additionally assume $\eps \geq 2e\delta^{\frac 12}$, then we get
$(J(x)/\log \delta^{-\frac12})^{1-\alpha} \geq   \eps$. Therefore, in general
$$x_{t+\delta}-x_t \leq 
\max\Bigl(2\Bigl(\frac{J(x)}{\log \delta^{-\frac12}}\Bigr)^{1-\alpha},  2e\delta^{\frac12}\Bigr).$$
Hence the level sets are uniformly equicontinuous. As $x_0=0$ for all $x\in\cI[0,\infty)$ this implies 
that the level sets are uniformly bounded on compact sets, which finishes the proof. 
\end{proof}

We now improve our large deviation principle to the topology of
uniform convergence on compact sets, which is stronger than the
topology of pointwise convergence. To this end we introduce,
for every $m\in\IN$, a mapping $f_m$ acting on functions 
$x\colon [0,\infty) \to \IR$ by
\begin{equation}\label{fmdef}
f_m(x)_t=  x_{t_{j}} \qquad \mbox{ if }  t_j:=\sfrac jm \leq t
< \sfrac{j+1}m =:t_{j+1}.\\[2mm]
\end{equation}

\begin{lemma}\label{uniapp}
For every $\delta>0$ and $T>0$, we have
$$\lim_{m\to\infty} \limsup_{\kappa\uparrow\infty} \frac1{a_\kappa}
\log \IP\big( \sup_{0\le t \le T} \big| f_m\big(\sfrac1\kappa Z_{\kappa\,\cdot} \big)_t - 
\sfrac1\kappa Z_{\kappa t} \big| > \delta \big) = -\infty.$$
\end{lemma}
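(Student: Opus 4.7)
The plan is to exploit the monotonicity of $(Z_t)$. Because $Z_t$ is nondecreasing, the uniform error collapses into the maximum increment of $\sfrac1\kappa Z_{\kappa\cdot}$ across the $\lceil mT\rceil$ grid intervals,
$$\sup_{0\leq t\leq T}\bigl| f_m\bigl(\sfrac1\kappa Z_{\kappa\cdot}\bigr)_t - \sfrac1\kappa Z_{\kappa t}\bigr|= \max_{0\leq j<\lceil mT\rceil}\frac{Z_{\kappa(j+1)/m}-Z_{\kappa j/m}}{\kappa}.$$
So it suffices to show that the probability of any single grid increment exceeding $\delta$ decays on the scale $a_\kappa$ at a rate that tends to infinity as $m\to\infty$. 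I would first restrict to the high-probability event $B_\kappa=\{Z_{\kappa(T+1/m)}\leq C\kappa\}$, whose complement is exponentially negligible with some rate $L(C)\to\infty$ by the weak-topology large deviation principle of Lemma~\ref{weakldp}, so that the starting point of every increment lies in $[0,C\kappa]$.

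Next, I would convert each grid increment into an occupation-time question via the Markov property of the jump process $(Z_t)$: conditional on $Z_{\kappa j/m}=u$, the event $\{Z_{\kappa(j+1)/m}-u>\delta\kappa\}$ has probability $\IP(T[u,u+\delta\kappa)<\kappa/m)$ by memorylessness at state~$u$. A union bound over the $\lceil mT\rceil$ intervals then gives
$$\IP\Bigl(\max_j\bigl(Z_{\kappa(j+1)/m}-Z_{\kappa j/m}\bigr)>\delta\kappa;\,B_\kappa\Bigr)\leq \lceil mT\rceil\,\sup_{u\in[0,C\kappa]\cap\IS}\IP\bigl(T[u,u+\delta\kappa)<\kappa/m\bigr).$$

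To eliminate the supremum in $u$, I would use the pathwise bound $T[u,u+\delta\kappa)\geq T[a,b)$ whenever $[a,b)\subset[u,u+\delta\kappa)$, which is immediate from the representation of $T[\cdot,\cdot)$ as a sum of independent exponentials on the common family $(T[w])_{w\in\IS}$. Every interval of length $\delta\kappa$ with left endpoint in $[0,C\kappa]$ contains at least one of the $K+1$ reference intervals $[k\delta\kappa/2,(k+1)\delta\kappa/2)$, $k=0,\ldots,K$, with $K=\lceil 2C/\delta\rceil+1$. Thus the supremum reduces to a maximum over finitely many $k$, each falling within the scope of Lemma~\ref{occuLDP} and controlled in rate by $\Lambda^*_{k\delta/2,(k+1)\delta/2}(\sfrac1m)$.

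The main obstacle will be to verify that the minimum of these rates over $k\in\{0,\ldots,K\}$ tends to infinity as $m\to\infty$. For $k\geq 1$, Lemma~\ref{simplJ} applied with $x_0=\delta/2$ identifies the rate as asymptotically $(k\delta/2)^{\alpha/(1-\alpha)}\sfrac1m\,\psi(m\delta/2)$, which diverges since $\psi(t)\sim t\log t$ at infinity. The delicate case is $k=0$, which is outside the range of Lemma~\ref{simplJ}; here I would analyse the Legendre transform directly: for $\zeta=-\lambda<0$ one has $-\Lambda_{0,\delta/2}(\zeta)=\int_0^{\delta/2}s^{\alpha/(1-\alpha)}\log\bigl(1+\lambda s^{-\alpha/(1-\alpha)}\bigr)\,ds$, which for large $\lambda$ grows like a constant multiple of $\log\lambda$ (the integral converging at $0$ thanks to $\alpha<\sfrac12$). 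Optimizing in $\lambda$ yields $\Lambda^*_{0,\delta/2}(\sfrac1m)\geq c_\delta\log m-C_\delta$ with $c_\delta>0$, which also diverges. Assembling the bounds gives an exponential decay rate of $\min\{L(C),R(m,\delta)\}$ with $R(m,\delta)\to\infty$, and the proof concludes by letting $C\to\infty$ and then $m\to\infty$.
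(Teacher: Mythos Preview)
Your argument is correct, but it takes a noticeably different route from the paper. The paper observes (as you do) that monotonicity collapses the supremum to a maximum of grid increments, then simply applies the pointwise-topology large deviation principle of Lemma~\ref{weakldp} to each two-point marginal:
\[
\limsup_{\kappa\uparrow\infty}\frac1{a_\kappa}\log\IP\bigl(\sfrac1\kappa Z_{\kappa t_{j+1}}-\sfrac1\kappa Z_{\kappa t_j}\geq\delta\bigr)\leq -\inf\{J(x):x_{t_{j+1}}-x_{t_j}\geq\delta\},
\]
and then invokes the equicontinuity of the level sets of $J$ established in Lemma~\ref{Jgood} to see that the right-hand side diverges to $-\infty$ uniformly in $j$ as $m\to\infty$. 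That is the whole proof.

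You instead go back to the occupation-time building block, Lemma~\ref{occuLDP}: you localise to $\{Z_{\kappa(T+1/m)}\leq C\kappa\}$, translate each grid increment into an occupation-time event via the Markov property, dominate by finitely many reference intervals, and then analyse $\Lambda^*_{k\delta/2,(k+1)\delta/2}(1/m)$ directly, treating the boundary case $k=0$ by hand. This is sound, and in effect reproves the relevant piece of Lemma~\ref{Jgood} from scratch. The paper's route is shorter because the hard work (identifying $J$ and proving its level sets are equicontinuous) has already been packaged; your route is more self-contained but duplicates that analysis. Either way the conclusion is the same.
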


\begin{proof}
Note that
$$\begin{aligned} 
\IP\big( \sup_{0\le t \le T} \big| f_m\big(\sfrac1\kappa Z_{\kappa\,\cdot} \big)_t - 
\sfrac1\kappa Z_{\kappa t} \big| \geq \delta \big)
& \leq \sum_{j=0}^{Tm} \IP\big( \sfrac1\kappa\,Z_{\kappa t_{j+1}} - \sfrac1\kappa\,Z_{\kappa t_{j}} \geq \delta  \big).
\end{aligned}$$
By Lemma~\ref{weakldp} we have
$$\limsup_{\kappa\uparrow\infty} \frac1{a_\kappa} \, \log \IP\big( 
\sfrac1\kappa\,Z_{\kappa t_{j+1}} - \sfrac1\kappa\,Z_{\kappa t_{j}} \geq \delta \big)
\le \inf\big\{ J(x) \colon x_{t_{j+1}}-x_{t_j} \geq \delta \big\},$$
and, by Lemma~\ref{Jgood}, the right hand side diverges to infinity, uniformly in~$j$,
as~$m\uparrow\infty$.
\end{proof}

\begin{proof}[ of the first large deviation principle in Theorem~\ref{LDP}]
We apply \cite[Theorem 4.2.23]{DemZei98}, which allows to transfer
the large deviation principle from the topological Hausdorff space
of increasing functions with the topology of pointwise convergence, 
to the metrizable space ${\mathcal I}[0,\infty)$ by means of the sequence 
$f_m$ of continuous mappings approximating the identity. Two conditions need
to be checked: On the one hand, using the equicontinuity of the sets $\{I(x)\leq \eta\}$ 
established in Lemma~\ref{Jgood}, we easily obtain
$$\limsup_{m\to\infty} \sup_{J(x)\leq \eta} d\big( f_m(x), x
\big) =0,$$ for every $\eta>0$, where $d$ denotes a suitable metric
on ${\mathcal I}[0,\infty)$. On the other hand, by Lemma~\ref{uniapp}, we have 
that $(f_m(\frac1\kappa Z_{\kappa\,\cdot}))$ are a family of exponentially
good approximations of $(\frac1\kappa Z_{\kappa\,\cdot})$.
\end{proof}
\bigskip

The proof of the second large principle can be done from first
principles.
\bigskip

\begin{proof}[ of the second large deviation principle in Theorem~\ref{LDP}]
For the \emph{lower} bound observe that, for any~$T>0$ and $\eps>0$,
$$\IP\big( \sup_{0\le t\le T} |{\sfrac1\kappa Z_{\kappa t}} - (t-a)_+ | <\eps \big)\geq \IP\big( Z_{\kappa a}=0 \big) \, 
\IP\big( \sup_{a\le t \le T}|{\sfrac1\kappa (Z_{\kappa t}-Z_{\kappa a}) - (t-a)}| <\eps \big),$$
and recall that the first probability on the right hand side is
$\exp\{-\kappa \, a f(0)\}$ and the second  converges to one, by the
law of large numbers. 
For the \emph{upper} bound note first that, by the first large
deviation principle, for any $\eps>0$ and closed set
$A\subset\{J(x)>\eps\}$,
$$\limsup_{\kappa\uparrow\infty} \frac1\kappa\, \log\IP\big( {\sfrac1\kappa Z_{\kappa\,\cdot}}
\in A\big)  = - \infty.$$ Note further that, for any $\delta>0$ and~$T>0$,
there exists $\eps>0$ such that $J(x)\le\eps$ implies
$\sup_{0\le t\le T}|x-y|<\delta$, where $y_t=(t-a)_+$ for some $a\in[0,T]$. Then,
for $\theta<f(0)$,
$$\begin{aligned}
\IP\big( \sup_{0\le t\le T} |\sfrac1\kappa Z_{\kappa t}-y| \leq \delta \big) & \leq 
\IP\big( Z_{\kappa a} \leq \delta\kappa \big)= \IP\big( T[0,\kappa\delta] \geq
\kappa a\big) \leq  e^{-\kappa a \theta}\,  \prod_{\Phi(j) \le
\kappa\delta}
\IE\exp\big\{ \theta S_j \big\} \\
& = e^{-\kappa a \theta}\,  \exp \sum_{\Phi(j) \le \kappa\delta}
\log \frac{1}{1-\frac\theta{f(j)}} \, ,
\end{aligned}$$
and the result follows because the sum on the right is bounded by a
constant multiple of $\kappa\delta$.
\end{proof}

\subsection{The moderate deviation principle}

Recall from the beginning of Section~\ref{sec42} that it is sufficient to show 
Theorem~\ref{MDP} for the approximating process $Z$ defined in~\eqref{Zdef}. 
We initially include the case~$c=\infty$ in our consideration, and abbreviate
$$b_\kappa:= a_\kappa \, \kappa^{\frac{2\alpha-1}{1-\alpha}} \, \bar{\ell}(\kappa)
\ll \kappa^{\frac{\alpha}{1-\alpha}} \, \bar{\ell}(\kappa),$$ so
that we are looking for a moderate deviation principle with speed~$a_\kappa b_\kappa$.

\begin{lemma}\label{MDPocc}
Let $0\leq u<v$, suppose that  $f$ and $a_\kappa$ are as in
Theorem~\ref{MDP} and define
$$\cI_{[u,v)}=\int_{u}^v s^{-\frac\alpha{1-\alpha}}\, ds= \sfrac{1-\alpha}{1-2\alpha}\, \Big(v^{\frac{1-2\alpha}{1-\alpha}}-u^{\frac{1-2\alpha}{1-\alpha}}\Big).$$
Then the family
$$\bigg( \frac{T{[\kappa u,\kappa v)}-\kappa(v-u))}
{a_\kappa} \bigg)_{\kappa>0}$$ satisfies a large deviation principle
with speed $(a_\kappa b_\kappa)$ and rate function
$$I_{[u,v)}(t)=\begin{cases}
\frac1{2\cI_{[u,v)}} \, t^2 & \text { if } u>0 \mbox{ or }
t\leq \sfrac1c
\, \cI_{[0,v)} \,f(0),\\[2mm]
\sfrac1c\,f(0)\, t-\frac12 \cI_{[0,v)} (\sfrac1c\,f(0))^2&\text{ if }u=0 \mbox{ and } t\geq \sfrac1c
\, \cI_{[0,v)} \,f(0).
\end{cases}$$
\end{lemma}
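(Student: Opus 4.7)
The plan is to apply the Gärtner--Ellis theorem to the centred sum $T[\kappa u,\kappa v)-\kappa(v-u)$, rescaled by $a_\kappa$, at speed $a_\kappa b_\kappa$, where I abbreviate $b_\kappa:=a_\kappa\kappa^{(2\alpha-1)/(1-\alpha)}\bar\ell(\kappa)$ so that $b_\kappa\to c\in[0,\infty)$. Since the $S_j\sim{\sf Exp}(f(j))$ are independent with $\log\IE e^{\theta S_j}=\xi(\theta/f(j))$, the rescaled cumulant generating function reads
$$
\Psi_\kappa(\lambda)=\frac{1}{a_\kappa b_\kappa}\Bigl[\sum_{j\in\II_\kappa}\xi\Bigl(\frac{\lambda b_\kappa}{f(j)}\Bigr) -\lambda b_\kappa\,\kappa(v-u)\Bigr].
$$
The central task is to compute $\lim_\kappa\Psi_\kappa(\lambda)$. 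For $u>0$ one has $\min_{j\in\II_\kappa}f(j)\ge \bar f(\kappa u)\to\infty$, so $\lambda b_\kappa/f(j)\to 0$ uniformly in $j\in\II_\kappa$, and I can Taylor expand $\xi(x)=x+\tfrac12 x^2+O(x^3)$. The linear term telescopes via $\sum 1/f(j)=\Phi(\max\II_\kappa+1)-\Phi(\min\II_\kappa)$ to $\kappa(v-u)$ up to an edge error that is $o(a_\kappa b_\kappa)$ thanks to the lower bound on $a_\kappa$; the quadratic term is a Riemann sum for $\int_{\kappa u}^{\kappa v}\bar f(t)^{-1}\,dt$, asymptotic to $\bar\ell(\kappa)^{-1}\kappa^{(1-2\alpha)/(1-\alpha)}\cI_{[u,v)}$ by Lemma~\ref{reglema}, and the defining identity $b_\kappa^{2}\bar\ell(\kappa)^{-1}\kappa^{(1-2\alpha)/(1-\alpha)}=a_\kappa b_\kappa$ turns this into exactly $\tfrac12\lambda^2\cI_{[u,v)}$; cubic and higher remainders contribute $O(b_\kappa/a_\kappa)\to 0$.

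The case $u=0$ requires extra care since $f(0)$ remains bounded. I will split $\II_\kappa=\{0,\dots,J\}\cup\II_\kappa^{>J}$ and treat the high-index block as above (now using $\alpha<\tfrac12$ to ensure integrability of $s^{-\alpha/(1-\alpha)}$ at zero, so the contribution tends to $\tfrac12\lambda^2\cI_{[\Phi(J+1)/\kappa,v)}$ and then to $\tfrac12\lambda^2\cI_{[0,v)}$ as $J\to\infty$). For $\lambda<f(0)/c$ the low-index block contributes at most $\sum_{j\leq J}\xi(\lambda c/f(j))+o(1)=O(1)=o(a_\kappa b_\kappa)$ and disappears; for $\lambda>f(0)/c$ the single summand $\xi(\lambda b_\kappa/f(0))$ equals $+\infty$ eventually, forcing $\Psi_\kappa(\lambda)=+\infty$. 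Thus $\Psi(\lambda)=\tfrac12\lambda^2\cI_{[0,v)}$ for $\lambda\leq f(0)/c$ and $+\infty$ beyond, and computing $\Psi^*(t)=\sup_\lambda[\lambda t-\Psi(\lambda)]$ yields $t^2/(2\cI_{[0,v)})$ when the unconstrained maximiser $\lambda^*=t/\cI_{[0,v)}$ is admissible, and the boundary value $(f(0)/c)t-\tfrac12(f(0)/c)^2\cI_{[0,v)}$ otherwise, matching $I_{[u,v)}$.

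Finally I need to upgrade cumulant convergence to the LDP. The upper bound is a direct Chernoff optimisation over the effective domain and does not require essential smoothness. The lower bound in the Gaussian branch follows from standard Gärtner--Ellis because the maximiser $\lambda^*$ lies in the interior of the effective domain where $\Psi$ is differentiable. In the linear branch ($u=0$, $t>\cI_{[0,v)}f(0)/c$) the cumulant fails to be essentially smooth at the boundary $\lambda=f(0)/c$, so I construct the deviation explicitly: for $\beta:=t-\cI_{[0,v)}f(0)/c>0$, intersect the event $\{S_0\ge \beta a_\kappa\}$ (probability $e^{-f(0)\beta a_\kappa}$, contributing $f(0)\beta/c$ on the scale $a_\kappa b_\kappa$) with the event that the independent residual $T[0,\kappa v)-S_0$ deviates by $(t-\beta)a_\kappa$ (which is in the Gaussian regime and costs rate $\tfrac12(t-\beta)^2/\cI_{[0,v)}=\tfrac12(f(0)/c)^2\cI_{[0,v)}$, since deleting one summand leaves the limiting cumulant unchanged). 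These rates sum to exactly $I_{[0,v)}(t)$, matching the upper bound. The main obstacle will be the careful book-keeping in the $u=0$ case, in particular the uniform control of the low-index block as $J\to\infty$ and the clean execution of the change-of-measure argument in the linear branch without losing the sharp constant.
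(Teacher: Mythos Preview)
Your proposal is correct and follows essentially the same route as the paper: compute the limiting rescaled cumulant (the paper invokes Karamata's theorem where you use a low/high-index block split, but these are equivalent), apply G\"artner--Ellis, and for the non-exposed linear branch at $u=0$ build the lower bound by hand via the single heavy summand $T[0]=S_0$ plus a Gaussian residual. The only execution difference worth noting is that the paper's lower-bound decomposition is three-piece---$T[0]$, a middle piece $T(0,\kappa\eps)$ handled by the law of large numbers, and a tail $T[\kappa\eps,\kappa v)$ to which the already-established $u>0$ LDP applies directly---which cleanly sidesteps the small wrinkle in your two-piece split that the residual $\sum_{j\ge1}S_j$ may still have its cumulant boundary at $f(0)/c$ when $f(1)=f(0)$ (easily patched by removing finitely many more terms, but worth flagging).
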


\begin{proof} Denoting by $\Lam_\kappa$ the logarithmic moment generating function of
$b_\kappa \, ( T[\kappa u,\kappa v)-\kappa(v-u) )$, observe that
\begin{align}
\Lam_\kappa(\th)&= \log \IE \exp\big\{ \th b_{\kappa} \,
\big(T[\kappa u,\kappa v)-\kappa(v-u) \big) \big\}  =
\!\!\!\sum_{w\in\IS\cap [\kappa u,\kappa v)} \!\!\!\!\!\!\log
\IE \exp\big\{ \th b_{\kappa} \,  \big(T[w] \big) \big\}   -\th\kappa b_{\kappa}(v-u) \nonumber\\[2mm]
& = \sum_{w\in\IS\cap [\kappa u,\kappa v)} \xi\Big(\frac{\th
b_\kappa}{\bar f(w)}\Big) -\th \kappa b_\kappa \,(v-u) =
\int_{\II_\kappa} \bar f(w) \xi \Big(\frac{\th b_\kappa}{\bar
f(w)}\Big)\, dw -\th \kappa b_\kappa\, (v-u), \label{eq0805-1}
\end{align}
where $\II_\kappa=\{w\geq 0: \iota (w)\in[\kappa u,\kappa v)\}$ and $\iota(w)=\max \IS\cap [0,w]$.
Since  $\kappa u\leq \inf \,\II_\kappa< \kappa u+(f(0))^{-1}$ and  $\kappa v\leq \sup \II_\kappa<\kappa v+(f(0))^{-1}$ we get
\begin{align}\label{eq1025-1}
\Bigl|\Lam_\kappa(\th)- \int_{\II_\kappa}  \bigl[ \bar f(w) \xi \Big(\frac{\th b_\kappa}{\bar f(w)}\Big)
-\th b_\kappa\bigr]\, dw
\Bigr| \leq \frac{2\th b_\kappa}{f(0)}.
\end{align}
Now focus on the case $u>0$. A Taylor approximation gives $\xi(w)= w+\frac12 (1+o(1)) w^2$, as $w\downarrow 0$.
By dominated convergence,
\begin{align*}\begin{split}
\int_{\II_\kappa}  \bigl[ \bar f(w) \xi \Big(\frac{\th b_\kappa}{\bar f(w)}\Big)-\th b_\kappa\bigr]\, dw
&\sim \frac12 \, \int_{\II_\kappa}  \frac 1 {\bar f(w)}\,dw \times \th^2b_\kappa^2\\
&\sim  \frac12 \, \frac{\kappa^{\frac{1-2\alpha}{1-\alpha}}}{\bar
\ell(\kappa)} \int_u^v w^{-\frac\alpha{1-\alpha}}\,dw\times \th^2
b_\kappa^2 \\ & =a_\kappa b_\kappa \, \frac12 \, \cI_{[u,v)}
\,\th^2\, .
\end{split}
\end{align*}
Together with (\ref{eq1025-1}) we arrive at
$$\Lam_\kappa(\th)\sim a_\kappa b_\kappa \, \frac12 \, \cI_{[u,v)}  \th^2.$$
Now the G\"artner-Ellis theorem implies that the family $((T[\kappa u,\kappa v)-\kappa(v-u))/a_\kappa)$ satisfies a large deviation
principle with speed $(a_\kappa b_\kappa)$ having as rate function the
Fenchel-Legendre transform of $\frac12 \cI_{[u,v)} \th^2$ which is $I_{[u,v)}$.
\medskip

Next, we look at the case~$u=0$. If $\th\geq \sfrac1c\,f(0)$ then $\Lam_\kappa(\th)=\infty$ for all $\kappa>0$,
so assume the contrary. The same Taylor expansion as above now gives
$$\bar f(w) \xi\Bigl(\frac{\th b_\kappa}{\bar f(w)}\Bigr)-\th b_\kappa
\sim \frac12 \frac{\th^2 b_\kappa^2}{\bar f(w)}$$
as $w \uparrow \infty$.
In particular, the integrand in \eqref{eq0805-1}
is regularly varying with index $-\frac{\alpha}{1-\alpha}>-1$ and we get from
Karamata's theorem, see e.g. \cite[Theorem 1.5.11]{BiGoTeu87},
 that
\begin{align}\label{eq0809-3}
\Lam_\kappa(\th)\sim \frac12\, \th^2 b_\kappa^2\, \frac{\kappa^{\frac{1-2\alpha}{1-\alpha}}}
{\bar \ell(\kappa)} \int _0^v s^{-\alpha/(1-\alpha)} \,ds=a_\kappa b_\kappa\,\frac12 \, \cI_{[0,v)} \, \th^2.
\end{align}
Consequently,
$$\lim_{\kappa\to\infty} \frac1{a_\kappa b_\kappa} \Lam_\kappa(\th)=\begin{cases}
\frac12\, \cI_{[0,v)} \th^2 &\text{ if  }\th<\sfrac1c\,f(0),\\
\infty& \text{ otherwise.}
\end{cases}
$$
The Legendre transform of the right hand side is
$$
I_{[0,v)}(t)=\begin{cases}
\frac1{2\cI_{[0,v)}} t^2 & \text { if } t\leq\sfrac1c
\, \cI_{[0,v)} \,f(0),\\
\sfrac1c\,f(0)\, t-\frac12 \cI_{[0,v)} \big(\sfrac1c\,f(0)\big)^2&\text{ if }t\geq \sfrac1c
\, \cI_{[0,v)} \,f(0).
\end{cases}
$$
Since $I_{[0,v)}$ is not strictly convex the G\"artner-Ellis Theorem does not imply the full large deviation principle. It remains to prove the lower bound for open sets $(t,\infty)$ with $t\geq \sfrac1c\,\cI_{[0,v)} f(0)$. Fix $\eps\in(0,u)$  and note that, for sufficiently large~$\kappa$,
\begin{align*}
\IP\big((T{[\kappa u,\kappa v)}-\kappa v)/a_\kappa>t\big) & \geq \IP\big((T{[\kappa \eps,\kappa v)}-\kappa (v-\eps))/a_\kappa>\sfrac1c\,\cI_{[0,v)} f(0)\big)\\
& \quad\times \underbrace{\IP\big((T{(0,\kappa \eps)}-\kappa \eps)/a_\kappa>-\eps\big)}_{\to1} \, \IP\big(T[0]/a_\kappa>t-\sfrac1c\, \cI_{[0,v)} f(0)+\eps\big).
\end{align*}
so that by the large deviation principle for $((T_{[\kappa \eps,\kappa v)}-\kappa (v-\eps))/a_\kappa)$ and the exponential distribution it follows that
$$
\liminf _{\kappa\to \infty} \frac1{a_\kappa b_\kappa}
\log \IP\big((T{[0,\kappa v)}-\kappa v)/a_\kappa>t\big)
\geq -\frac{(\sfrac1c\,\cI_{[0,v)} f(0))^2}{2\cI_{[\eps,v)}} -(t-\sfrac1c\,\cI_{[0,v)} f(0)+\eps)
\sfrac1c\,f(0).
$$
Note that the right hand side converges to $-I_{[0,v)}(t)$ when letting $\eps$ tend to zero.
This establishes the full large deviation principle for $((T{[0,\kappa v)}-\kappa v )/a_\kappa)$.
\end{proof}

\pagebreak[3]

We continue the proof of Theorem~\ref{MDP} with a finite-dimensional
moderate deviation principle, which can be derived from
Lemma~\ref{MDPocc}.

\begin{lemma}
Fix $0=t_0<t_1<\cdots<t_p$. Then the vector
$$\Big( \sfrac1{a_\kappa} \big(Z_{\kappa t_j} - \kappa t_j\big) \, :\, j\in\{1,\ldots,p\}\Big)$$
satisfies a large deviation principle in $\IR^p$ with speed $a_\kappa b_\kappa$ and rate function
$$I(a_1,\ldots,a_p)=\sum_{j=1}^p I_{[t_{j-1},t_j)}(a_{j-1}-a_{j}), \qquad \mbox{ with }a_0:=0\, .$$
\end{lemma}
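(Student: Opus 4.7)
The plan is to transfer the finite-dimensional moderate deviation principle from the occupation time process $T$, for which independence of increments over disjoint intervals is built in, to the degree process $Z$ via the duality $\{Z_t\ge v\}=\{T[0,v)\le t\}$. The differences $a_{j-1}-a_j$ will then appear naturally as the deviations of the $T$-increments, so that the rate function of Lemma \ref{MDPocc}, applied interval by interval, reproduces the claimed expression.

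First I would prove a joint MDP for $((T[0,\kappa s_j)-\kappa s_j)/a_\kappa)_{j=1}^p$ on any fixed partition $0=s_0<s_1<\cdots<s_p$. The summands $T[\kappa s_{j-1},\kappa s_j)$ are functions of disjoint subfamilies of the independent exponentials $(T[w])_{w\in\IS}$ and are therefore jointly independent, and each has, by Lemma \ref{MDPocc}, a one-dimensional MDP with speed $(a_\kappa b_\kappa)$ and rate $I_{[s_{j-1},s_j)}$. Combining the independent univariate LDPs yields a joint MDP on $\IR^p$ with rate $(c_1,\ldots,c_p)\mapsto\sum_j I_{[s_{j-1},s_j)}(c_j)$, and the linear bijection carrying increments to partial sums is a continuous contraction that produces a joint MDP for $((T[0,\kappa s_j)-\kappa s_j)/a_\kappa)_{j}$ with rate function $(\sigma_1,\ldots,\sigma_p)\mapsto\sum_j I_{[s_{j-1},s_j)}(\sigma_j-\sigma_{j-1})$, where $\sigma_0:=0$.

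Next I would transfer this to the $Z$-side. Fix $(a_1,\ldots,a_p)\in\IR^p$ and let $w_j^\kappa$ denote the smallest element of $\IS$ at or above $\kappa t_j+a_j a_\kappa$. The duality gives $\{Z_{\kappa t_j}\ge w_j^\kappa\}=\{T[0,w_j^\kappa)\le\kappa t_j\}$, which rearranges to $\{(T[0,w_j^\kappa)-w_j^\kappa)/a_\kappa\le -a_j+o(1)\}$; the $o(1)$ term comes from the gap of $\IS$ near $\kappa t_j$, of order $1/\bar f(\kappa t_j)$, and is negligible on the MDP scale because the hypothesis $a_\kappa\gg\kappa^{(1-2\alpha)/(2-2\alpha)}\bar\ell(\kappa)^{-1/2}$ forces $a_\kappa\bar f(\kappa t_j)\to\infty$. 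Hence the $Z$-event $\{(Z_{\kappa t_j}-\kappa t_j)/a_\kappa\ge a_j,\,j=1,\ldots,p\}$ coincides, to within the MDP precision, with the $T$-event $\{(T[0,w_j^\kappa)-w_j^\kappa)/a_\kappa\le -a_j,\,j=1,\ldots,p\}$. Applying the first step along the $\kappa$-dependent partition $s_j^\kappa:=w_j^\kappa/\kappa=t_j+a_j a_\kappa/\kappa+o(a_\kappa/\kappa)$ and using joint continuity of $(u,v)\mapsto I_{[u,v)}$ in its endpoints (visible from the explicit formula in Lemma \ref{MDPocc}), the rate at $(-a_1,\ldots,-a_p)$ is
$$\sum_{j=1}^p I_{[t_{j-1},t_j)}\bigl((-a_j)-(-a_{j-1})\bigr)=\sum_{j=1}^p I_{[t_{j-1},t_j)}(a_{j-1}-a_j),$$
which is the claimed rate function.

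Promoting this pointwise identification to a bona fide joint LDP is routine: the lower bound follows by restricting to the explicit event exhibited above and applying Lemma \ref{MDPocc} factorwise, while the upper bound uses the matching decomposition together with the exponential tightness inherited from the univariate MDPs. The main obstacle I anticipate is the boundary case $j=1$ with $-a_1\ge\tfrac1c\,\cI_{[0,t_1)}\,f(0)$, where $I_{[0,t_1)}$ is only piecewise quadratic and hence not strictly convex, so the G\"artner--Ellis route does not yield the lower bound automatically. The ad hoc device already used in Lemma \ref{MDPocc} in this regime, namely splitting off the single exponential holding time $T[0]$ and treating the remainder of the first interval by the Gaussian estimate, transfers without change to the multi-interval setting and covers the boundary regime; the remaining bookkeeping consists only in controlling the $\IS$-discretization error, which as noted is of smaller order than $a_\kappa$.
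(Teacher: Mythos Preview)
Your overall architecture matches the paper's: (i) an MDP for the independent $T$-increments via Lemma~\ref{MDPocc} and independence, (ii) contraction to partial sums $(T[0,\kappa t_j)-\kappa t_j)/a_\kappa$, (iii) transfer to $Z$ through the duality $\{Z_t\ge v\}=\{T[0,v)\le t\}$. The difference is in how step (iii) is executed.

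The paper does not work with a $\kappa$-dependent partition $s_j^\kappa=t_j+a_j a_\kappa/\kappa$. Instead it proves directly that $T[0,\kappa t+a_\kappa b)$ and $T[0,\kappa t)+a_\kappa b$ are exponentially equivalent on the scale $a_\kappa b_\kappa$, i.e.
\[
\lim_{\kappa\to\infty} (a_\kappa b_\kappa)^{-1}\log\IP\bigl(\,\bigl|T[0,\kappa t+a_\kappa b)-T[0,\kappa t)-a_\kappa b\bigr|>a_\kappa\eps\bigr)=-\infty,
\]
via a one-line Chebyshev/moment-generating-function argument. This lets the partition $(t_j)$ stay fixed throughout, so the LDP from step (ii) applies verbatim and the sign flip $a_j\mapsto -a_j$ drops out cleanly. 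Your route, applying the step-(i)/(ii) LDP ``along the $\kappa$-dependent partition'' and invoking continuity of $(u,v)\mapsto I_{[u,v)}$, is not quite complete: the LDP you proved is for each \emph{fixed} partition, and continuity of the rate function alone does not let you move the endpoints with $\kappa$ --- that requires exactly the kind of uniformity the paper's exponential equivalence supplies. So the gap in your argument is real but small, and the paper's device is the natural patch.

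One further simplification: your worry about the boundary regime $-a_1\ge\tfrac1c\,\cI_{[0,t_1)}f(0)$ is unnecessary at this stage. Lemma~\ref{MDPocc} already delivers the \emph{full} one-dimensional LDP for $T[0,\kappa t_1)$, boundary case included. Once that is in hand, the multi-interval statement follows from independence plus the contraction principle, which needs only goodness of the rate function, not strict convexity; there is no need to replay the $T[0]$-splitting trick.
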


\begin{proof}
We note that, for $-\infty\leq a^{\ssup j}<b^{\ssup j}\leq\infty$, we
have (interpreting conditions on the right as void, if they involve
infinity)
$$\begin{aligned}
\IP\big( a^{\ssup j} & \, a_\kappa \leq Z_{\kappa t_j} - \kappa t_j< b^{\ssup j} \,  a_\kappa
\mbox{ for all }j \big) \\
& = \IP\big( T[0,\kappa t_j+ a_\kappa a^{\ssup j}) \leq \kappa\, t_j,
T[0,\kappa t_j+ a_\kappa b^{\ssup j}) > \kappa\, t_j\mbox{ for all }j \big)\, .
\end{aligned}$$
To continue from here we need to show that the random variables $T[0,\kappa t+ a_\kappa b)$ and
$T[0,\kappa t)+ a_\kappa b$ are exponentially equivalent in the sense that
\begin{align}\label{expeq} \lim_{\kappa\to\infty}
a_\kappa^{-1}b_\kappa^{-1} \log \IP\big( \big|T[0,\kappa t+ a_\kappa b)-T[0,\kappa t)- a_\kappa b\big|
> a_\kappa \eps\big)=-\infty\, .
\end{align}
Indeed, first let $b>0$. As in Lemma~\ref{MDPocc}, we see that for
any $t\geq 0$ and $\theta\in\IR$,
\begin{align}
a_\kappa^{-1} b_\kappa^{-1}\,\log \IE \exp \big\{ \theta b_\kappa \big( T[\kappa t, \kappa t + a_\kappa b)
-   a_\kappa b \big) \big\} \longrightarrow 0,
\end{align}
Chebyshev's inequality gives, for any $A>0$,
$$\begin{aligned} \IP\big( T[0, & \kappa t+ a_\kappa b)-T[0,\kappa t)- a_\kappa b> a_\kappa \eps\big) \\
& \leq  e^{-A a_\kappa b_\kappa}\, \IE \exp \big\{ \sfrac A\eps\,
b_\kappa \big( T[\kappa t, \kappa t + a_\kappa b) -   a_\kappa b
\big) \big\}.
\end{aligned}$$
A similar estimate can be performed for $\IP( T[0, \kappa t+
a_\kappa b)-T[0,\kappa t)- a_\kappa b< - a_\kappa \eps)$, and the
argument also extends to the case~$b<0$. From this~\eqref{expeq}
readily follows.\smallskip

Using Lemma~\ref{MDP} and independence, we obtain a large deviation principle for the vector
$$\Big( \sfrac1{a_\kappa} \big(T[\kappa t_{j-1},\kappa t_j) -\kappa\, (t_j-t_{j-1})\big) \, : \, j\in\{1,\ldots,p\}\Big),$$
with rate function
$$I_1(a_1,\ldots,a_p)=\sum_{j=1}^p I_{[t_{j-1},t_j)}(a_j).$$
Using the contraction principle, we infer from this a large deviation principle
for the vector
$$\Big( \sfrac1{a_\kappa} \big(T[0,\kappa t_j) -\kappa\, t_j\big) \, : \, j\in\{1,\ldots,p\}\Big)$$
with rate function
$$I_2(a_1,\ldots,a_p)=\sum_{j=1}^p I_{[t_{j-1},t_j)}(a_j-a_{j-1})\, .$$
Combining this with \eqref{expeq} we obtain that
\begin{align*}
a_\kappa^{-1}b_\kappa^{-1} & \log \IP\big( T[0,\kappa t_j+ a_\kappa a^{\ssup j}] < \kappa\, t_j,
T[0,\kappa t_j+ a_\kappa b^{\ssup j}] > \kappa\, t_j\mbox{ for all }j \big)\\
& \sim a_\kappa^{-1}b_\kappa^{-1} \log
\IP\big( -  a_\kappa b^{\ssup j} < T[0,\kappa t_j] -\kappa\, t_j<  - a_\kappa a^{\ssup j} \mbox{ for all }j \big),
\end{align*}
and (observing the signs!) the required large deviation principle.
\end{proof}
\smallskip

We may now take a projective limit and arrive at a large deviation
principle in the space~${\mathcal P}(0,\infty)$ of functions
$x\colon(0,\infty)\to\IR$ equipped with the topology of pointwise
convergence.

\begin{lemma}\label{pointwise}
The family of functions
$$\Big( \sfrac1{a_\kappa} \big(Z_{\kappa t} - \kappa t\big) \colon t>0 \Big)_{{\kappa>0}}$$
satisfies a large deviation principle in the space~${\mathcal
P}(0,\infty)$, with speed $a_\kappa b_\kappa$ and rate function
$$I(x)= \left\{ \begin{array}{ll}\frac 12 \int_0^\infty (\dot{x}_t)^2 \, t^{\frac{\alpha}{1-\alpha}} \, dt
- \sfrac1c\,f(0)\,x_0 & \mbox{ if $x$ is absolutely continuous and $x_0\leq 0$.}\\
\infty & \mbox{ otherwise.} \end{array} \right.$$
\end{lemma}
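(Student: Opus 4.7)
The plan is to use the Dawson--G\"artner theorem to lift the finite-dimensional moderate deviation principle of the previous lemma to $\cP(0,\infty)$, and then identify the resulting variational rate function with $I$. Viewing $\cP(0,\infty)$ as the projective limit of $\IR^p$ under the coordinate projections $\pi_{t_1,\dots,t_p}(x) = (x_{t_1},\dots,x_{t_p})$ indexed by finite sets $0 < t_1 < \cdots < t_p$, the Dawson--G\"artner theorem (\cite[Theorem~4.6.1]{DemZei98}) furnishes an LDP on $\cP(0,\infty)$ with speed $(a_\kappa b_\kappa)$ and good rate function
$$\tilde I(x) = \sup_{0 < t_1 < \cdots < t_p} \sum_{j=1}^p I_{[t_{j-1}, t_j)}\bigl(x_{t_{j-1}} - x_{t_j}\bigr),$$
under the convention $x_{t_0} := 0$ in the first summand. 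It remains to identify $\tilde I = I$.

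For the interior terms $j \ge 2$ the relevant branch of $I_{[u,v)}$ is purely quadratic, $I_{[t_{j-1},t_j)}(a) = \frac{a^2}{2\cI_{[t_{j-1},t_j)}}$. Assuming $x$ absolutely continuous I would rewrite $x_{t_{j-1}} - x_{t_j} = -\int_{t_{j-1}}^{t_j} \dot x_s \, ds$ and apply Cauchy--Schwarz with the measure $s^{-\alpha/(1-\alpha)}\,ds$ underlying $\cI$, yielding
$$\frac{(x_{t_{j-1}} - x_{t_j})^2}{2\cI_{[t_{j-1}, t_j)}} \le \frac12 \int_{t_{j-1}}^{t_j} \dot x_s^{\,2}\, s^{\alpha/(1-\alpha)} \, ds,$$
so that summation over $j$ gives $\tilde I(x) \le I(x)$. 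The matching lower bound comes from refining the mesh and exploiting $L^2$-approximation of $\dot x$ by piecewise-constant functions against the weight $s^{\alpha/(1-\alpha)}\,ds$. To rule out $x$ that are not absolutely continuous, I would produce partitions whose combined length $\sum(t_j-t_{j-1})$ vanishes while $\sum|x_{t_{j-1}}-x_{t_j}|$ remains bounded away from zero, forcing $\tilde I(x) = \infty$ through the quadratic blow-up of at least one interior term, in the spirit of the non-absolute-continuity argument in the proof of Lemma~\ref{weakldp}.

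The main obstacle is the boundary term $j = 1$, which is the sole source of both the linear correction $-\sfrac{f(0)}c\, x_0$ and the constraint $x_0 \le 0$. Since $\cI_{[0, t_1)} = \sfrac{1-\alpha}{1-2\alpha}\, t_1^{(1-2\alpha)/(1-\alpha)} \to 0$ as $t_1 \dto 0$, I would analyse $\sup_{t_1 > 0} I_{[0, t_1)}(-x_{t_1})$ case by case in terms of $x_0 := \liminf_{t \dto 0} x_t$: if $x_0 > 0$ the quadratic branch $\frac{x_{t_1}^2}{2\cI_{[0, t_1)}}$ blows up along any sequence $t_1\dto 0$, forcing $x_0 \le 0$; if instead $x_0 \le 0$, then for sufficiently small $t_1$ one has $-x_{t_1} \ge \sfrac{f(0)}c\, \cI_{[0, t_1)}$, so the linear branch applies and
$$\sfrac{f(0)}c\,(-x_{t_1}) - \sfrac12\,\cI_{[0,t_1)}\,\bigl(\sfrac{f(0)}c\bigr)^2 \;\longrightarrow\; -\sfrac{f(0)}c\, x_0 \qquad\text{as } t_1 \dto 0.$$
Together with the interior estimate this gives $\tilde I(x) = I(x)$, and the case $c = \infty$ is handled uniformly: under the convention $1/\infty = 0$ the linear branch collapses to zero while the quadratic blow-up still enforces $x_0 \le 0$.
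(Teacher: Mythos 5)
Your proposal is correct and follows essentially the same route as the paper: Dawson--G\"artner on the projective limit $\cP(0,\infty)$, identification of the interior sum with $\frac12\int \dot x_t^{\,2}\, t^{\alpha/(1-\alpha)}\,dt$ via the explicit form of $\cI_{[u,v)}$ (the paper uses the two-sided Riemann-sum sandwich for $\cI_{[t_{j-1},t_j)}$ where you use Cauchy--Schwarz plus $L^2$-approximation, which is equivalent), and a limiting analysis of the boundary term $I_{[0,t_1)}(-x_{t_1})$ as $t_1\downarrow 0$. The one imprecision is your claim that for $x_0\leq 0$ the linear branch of $I_{[0,t_1)}$ applies for all small $t_1$: when $x_0=0$ one may have $-x_{t_1}<\sfrac1c\,\cI_{[0,t_1)}\,f(0)$ (e.g.\ $x_t=-t^2$ near $0$), so the quadratic branch is in force, but Cauchy--Schwarz bounds it by $\frac12\int_0^{t_1}\dot x_t^{\,2}\,t^{\alpha/(1-\alpha)}\,dt\to 0$, which is exactly how the paper disposes of that case, so the conclusion is unaffected.
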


\begin{proof}
Observe that the space of functions equipped with the topology of
pointwise convergence can be interpreted as the projective limit of
$\IR^p$ with the canonical projections given by
$\pi(x)=(x(t_1),\ldots,x(t_p))$ for $0<t_1<\ldots<t_p$. By the
Dawson-G\"artner theorem, we obtain a large deviation principle with
rate function
$$\begin{aligned}
\tilde I(x) & = \sup_{0<t_1<\ldots<t_p} \sum_{j=2}^p
I_{[t_{j-1},t_j)} \big(x_{t_{j-1}}-x_{t_j}\big) +
I_{[0,t_1)}\big(-x_{t_1}\big).
\end{aligned}$$
Note that the value of the variational expression is nondecreasing,
if additional points are added to the partition. We first fix
$t_1>0$ and optimize the first summand independently. Observe that
$$\begin{aligned}
\sup_{t_1<\ldots<t_p} & \sum_{j=2}^p I_{[t_{j-1},t_j)}
\big(x_{t_{j-1}}-x_{t_j}\big)  = \sfrac{1}{2} \,
\sup_{t_1<t_2<\ldots<t_p}  \sum_{j=2}^p
\frac{\big(x_{t_{j}}-x_{t_{j-1}}\big)^2}{\frac{1-\alpha}{1-2\alpha}\big(
t_j^{\frac{1-2\alpha}{1-\alpha}}-t_{j-1}^{\frac{1-2\alpha}{1-\alpha}}\big)}.
\end{aligned}$$
Recall that
$$\big(t_j-t_{j-1}\big)\, t_{j}^{\frac{-\alpha}{1-\alpha}}\leq
\sfrac{1-\alpha}{1-2\alpha}\big(t_j^{\frac{1-2\alpha}{1-\alpha}}-t_{j-1}^{\frac{1-2\alpha}{1-\alpha}}\big)
\leq\big(t_j-t_{j-1}\big)\, t_{j-1}^{\frac{-\alpha}{1-\alpha}}\, .$$
Hence we obtain an upper and [in brackets] lower bound of
$$\begin{aligned}
\sfrac{1}{2} \, \sup_{t_1<t_2<\ldots<t_p}  \sum_{j=2}^p \Big(
\frac{x_{t_{j}}-x_{t_{j-1}}}{t_j-t_{j-1}}\Big)^2 \,
t_{j[-1]}^{\frac{\alpha}{1-\alpha}} \,\big( t_j-t_{j-1}\big) .
\end{aligned}$$
It is easy to see that (using arguments analogous to those given in the last step in the proof of 
the first large deviation principle) that this is $+\infty$ if $x$ fails to be
absolutely continuous, and otherwise it equals
$$\sfrac 12 \, \int_{t_1}^\infty (\dot{x}_t)^2 \, t^{\frac{\alpha}{1-\alpha}} \, dt.$$
In the latter case we have
$$\tilde I(x) = \lim_{t_1\downarrow 0} \, \sfrac 12 \, \int_{t_1}^\infty
(\dot{x}_t)^2 \, t^{\frac{\alpha}{1-\alpha}} \, dt +
I_{[0,t_1)}\big(-x_{t_1}\big).$$ If $x_0>0$ the last summand
diverges to infinity. If $x_0=0$ and the limit of the integral is
finite, then using Cauchy-Schwarz,
$$I_{[0,t_1)}\big(-x_{t_1}\big) \leq \sfrac12 \cI^{-1}_{[0,t_1)}
\, \Big|\int_0^{t_1} {\dot x}_t \,  dt\Big|^2 \leq\sfrac12
\int_0^\eps ({\dot x}_t)^2 t^{\frac{\alpha}{1-\alpha}}\,dt,$$ hence
it converges to zero. If $x_0<0$,
$$\lim_{t_1\downarrow 0} I_{[0,t_1)}\big(-x_{t_1}\big)
= \lim_{t_1\downarrow 0} - \sfrac1c\,f(0)\,
x_{t_1}+\sfrac{1-\alpha}{1-2\alpha} \,
t_1^{\frac{\alpha}{1-\alpha}}(\sfrac1c\,f(0))^2 = - \sfrac1c\,f(0)\,
x_0,$$ as required to complete the proof.\end{proof}
\bigskip

\begin{lemma}\label{AA}
If $c<\infty$, the function~$I$ is a good rate function
on~${\mathcal L}(0,\infty)$.
\end{lemma}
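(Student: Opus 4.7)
The plan is to verify both defining properties of a good rate function: every level set $\Lam_\eta:=\{x\in \cL(0,\infty)\colon I(x)\leq \eta\}$ is (a) relatively compact and (b) closed in $\cL(0,\infty)$. Fix $\eta\geq 0$ and note that since $-\sfrac{f(0)}c x_0\geq 0$ and $\sfrac12\int_0^\infty(\dot x_t)^2 t^{\alpha/(1-\alpha)}\,dt\geq0$, the inequality $I(x)\leq \eta$ immediately yields the a priori bounds
$$-x_0 \leq \frac{c\eta}{f(0)} \qquad\text{and}\qquad \int_0^\infty (\dot x_t)^2 t^{\alpha/(1-\alpha)}\,dt \leq 2\eta.$$

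For (a), Cauchy--Schwarz gives, for $0\leq s<t$,
$$|x_t-x_s|^2 \leq \Bigl(\int_s^t (\dot x_u)^2 u^{\alpha/(1-\alpha)}\,du\Bigr)\Bigl(\int_s^t u^{-\alpha/(1-\alpha)}\,du\Bigr) \leq 2\eta\,\sfrac{1-\alpha}{1-2\alpha}\bigl(t^{\beta}-s^{\beta}\bigr),$$
with $\beta:=(1-2\alpha)/(1-\alpha)\in(0,1]$; the second integral is finite down to $s=0$ precisely because $\alpha<\sfrac12$. In particular every $x\in\Lam_\eta$ extends continuously to $[0,\infty)$ and $\liminf_{t\dto 0} x_t=\lim_{t\dto 0}x_t=x_0$. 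Combined with $|x_0|\leq c\eta/f(0)$, this shows $\Lam_\eta$ is uniformly bounded and uniformly equicontinuous on every compact subinterval of $[0,\infty)$, so Arzel\`a--Ascoli gives relative compactness in $\cL(0,\infty)$.

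For (b), suppose $x^n\in\Lam_\eta$ and $x^n\to x$ in $\cL(0,\infty)$. Setting $g^n(t):=\dot x^n_t\, t^{\alpha/(2(1-\alpha))}$, the a priori bound gives $\|g^n\|_{L^2(0,\infty)}^2\leq 2\eta$. Passing to a subsequence, $g^{n_k}\rightharpoonup g$ weakly in $L^2$ and $x_0^{n_k}\to a$ for some $a\in[-c\eta/f(0),0]$. Testing weak convergence against $\ind_{[s,t]}(u)\,u^{-\alpha/(2(1-\alpha))}\in L^2$ for $0<s<t$, and using $x^{n_k}_t-x^{n_k}_s=\int_s^t g^{n_k}(u)u^{-\alpha/(2(1-\alpha))}\,du$, yields
$$x_t-x_s=\int_s^t g(u)\, u^{-\alpha/(2(1-\alpha))}\,du,$$
so $x$ is absolutely continuous on $(0,\infty)$ with $g(t)=\dot x_t\, t^{\alpha/(2(1-\alpha))}$ almost everywhere. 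The H\"older bound of the previous paragraph applied to each $x^{n_k}$ gives $|x^{n_k}_t-x^{n_k}_0|\leq \sqrt{2\eta(1-\alpha)/(1-2\alpha)}\,t^{\beta/2}$; sending $k\to\infty$ and then $t\dto 0$ shows $x_0=a\leq 0$. Weak lower semicontinuity of the $L^2$ norm then gives
$$I(x)=\sfrac12\|g\|_{L^2}^2-\sfrac{f(0)}c\,a\leq \liminf_{k}\Bigl[\sfrac12\|g^{n_k}\|_{L^2}^2-\sfrac{f(0)}c\,x_0^{n_k}\Bigr]=\liminf_{k} I(x^{n_k})\leq \eta,$$
so $x\in\Lam_\eta$.

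The main obstacle will be the boundary behaviour at $t=0$: the topology on $\cL(0,\infty)$ does not see the endpoint, while $I$ does through $x_0=\liminf_{t\dto 0}x_t$, and the linear term $-\sfrac{f(0)}c x_0$ must pass correctly to the limit along the subsequence. The key technical point is that the Cauchy--Schwarz H\"older estimate remains valid down to $s=0$ exactly when $\alpha<\sfrac12$, which is what makes $x_0$ reconstructible from the values of $x$ on $(0,\infty)$ and reconciles the limit of the linear contribution with the quadratic one.
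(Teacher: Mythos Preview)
Your proof is correct, and the relative-compactness part (a) matches the paper's argument exactly: both use the Cauchy--Schwarz estimate
\[
|x_t-x_s|\leq \sqrt{2\eta\,\sfrac{1-\alpha}{1-2\alpha}}\,\bigl(t^{(1-2\alpha)/(1-\alpha)}-s^{(1-2\alpha)/(1-\alpha)}\bigr)^{1/2}
\]
to verify equicontinuity and uniform boundedness on compacts, then invoke Arzel\`a--Ascoli.

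The difference is that you also supply an explicit closedness argument in part (b), via weak $L^2$ compactness of the sequence $g^n=\dot x^n_t\,t^{\alpha/(2(1-\alpha))}$ and weak lower semicontinuity of the norm. The paper omits this step entirely; it can afford to, because the rate function $I$ was obtained in Lemma~\ref{pointwise} as a Dawson--G\"artner projective limit, which automatically produces a lower semicontinuous function in the pointwise topology, and lower semicontinuity then transfers to the finer uniform-on-compacts topology of $\cL(0,\infty)$. Your self-contained argument is a legitimate alternative that does not rely on the provenance of $I$, and it correctly handles the delicate point that the boundary value $x_0=\liminf_{t\downarrow 0}x_t$ is recovered from interior data via the H\"older bound (valid down to $s=0$ precisely because $\alpha<\sfrac12$).
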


\begin{proof}
Recall that, by the Arzel\`a-Ascoli theorem, it suffices to show
that for any $\eta>0$ the family $\{x \colon I(x)\leq \eta\}$ is
bounded and equicontinuous on every compact subset of~$(0,\infty)$.
\smallskip

Suppose that $I(x)\leq \eta$ and $0<s<t$. Then, using Cauchy-Schwarz
in the second step,
\begin{align*}
|x_t-x_s| & = \Big| \int_s^t {\dot x}_u\, du \Big| \leq \int_s^t
|{\dot x}_u|\, u^{\frac{\alpha}{2(1-\alpha)}} \,
u^{-\frac{\alpha}{2(1-\alpha)}} \, du\\
& \leq \Big( \int_s^t ({\dot x}_u)^2\, u^{\frac{\alpha}{1-\alpha}}
\, du\Big)^{\frac12} \, \Big( \int_s^t u^{-\frac{\alpha}{1-\alpha}}
\, du\Big)^{\frac12} \leq \sqrt{\eta\,\sfrac{1-\alpha}{1-2\alpha}}\,
\big(t^{\frac{1-2\alpha}{1-\alpha}}-s^{\frac{1-2\alpha}{1-\alpha}}\big)^{\sfrac12},
\end{align*}
which proves equicontinuity. The boundedness condition follows from
this, together with the observation that $0\geq x_0\geq
-c\eta/f(0)$.
\end{proof}
\medskip

To move our moderate deviation principle to the topology of
uniform convergence on compact sets, recall the definition
of the mappings~$f_m$ from~\eqref{fmdef}. We abbreviate
$$\bar Z^{\ssup \kappa}:=
\Big( \sfrac1{a_\kappa} \big(Z_{\kappa t} - \kappa
t\big) \, :\, t>0 \Big).$$

\begin{lemma}\label{ea}
$(f_m(\bar Z^{\ssup \kappa}))_{m\in\IN}$  are exponentially good
approximations of $(\bar Z^{\ssup \kappa})$ on ${\mathcal
L}(0,\infty)$.
\end{lemma}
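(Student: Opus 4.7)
The plan is to verify, for every $\delta>0$ and every compact subinterval $[T_0,T]\subset(0,\infty)$, that
\begin{align*}
\lim_{m\to\infty}\limsup_{\kappa\to\infty}\frac{1}{a_\kappa b_\kappa}\log\IP\Bigl(\sup_{t\in[T_0,T]}\bigl|f_m(\bar Z^{\ssup\kappa})_t-\bar Z^{\ssup\kappa}_t\bigr|>\delta\Bigr)=-\infty,
\end{align*}
which is enough because such seminorms generate the topology of $\cL(0,\infty)$. First I would cover $[T_0,T]$ by the grid cells $[t_j,t_{j+1})$, $t_j=j/m$, and note that the difference $f_m(\bar Z^{\ssup\kappa})_t-\bar Z^{\ssup\kappa}_t$ equals $\bar Z^{\ssup\kappa}_{t_j}-\bar Z^{\ssup\kappa}_t$ on the $j$-th cell. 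A union bound over the $O(m)$ cells that meet $[T_0,T]$ (losing only a polynomial-in-$m$ factor, which is negligible on the scale $a_\kappa b_\kappa$) reduces the task to controlling, for each $j$, the probability $\IP(\sup_{t\in[t_j,t_{j+1}]}|\bar Z^{\ssup\kappa}_t-\bar Z^{\ssup\kappa}_{t_j}|>\delta)$.

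Fixing such a $j$ and setting $\zeta=Z_{\kappa t_j}$, the key observation is that the process $M_w:=T[\zeta,\zeta+w)-w$, indexed by $w\in\IS-\zeta$, is a martingale with respect to the natural filtration of the occupation times, with exponential moments as in the proof of Lemma~\ref{MDPocc}. Using the monotonicity of $Z$ together with a short argument for the residual waiting time at level $\zeta$ (whose exponential tail is governed by $\bar f(\kappa t_j)$ and is therefore super-exponentially small on the MDP scale, since $a_\kappa\bar f(\kappa t_j)/(a_\kappa b_\kappa)\sim\kappa/a_\kappa\to\infty$), both events $\sup_t(\bar Z^{\ssup\kappa}_t-\bar Z^{\ssup\kappa}_{t_j})>\delta$ and $\inf_t(\bar Z^{\ssup\kappa}_t-\bar Z^{\ssup\kappa}_{t_j})<-\delta$ are contained, up to such a correction, in $\{\sup_{w\le W}(-M_w)>\sfrac{\delta}{2} a_\kappa\}$ and $\{\sup_{w\le W} M_w>\sfrac{\delta}{2} a_\kappa\}$ respectively, for $W$ of order $\kappa/m$.

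Doob's maximal inequality applied to the positive submartingales $\exp(\pm\lambda M_w)$, combined with the Gaussian-type estimate $\log\IE\exp(\pm\lambda M_w)\le \lambda^2 W/(2\bar f(\zeta))\,(1+o(1))$ that follows from the mgf expansion underlying Lemma~\ref{MDPocc}, and optimised over $\lambda>0$, yields a bound of order
\begin{align*}
\exp\Bigl(-\sfrac{\delta^2 a_\kappa^2 \bar f(\zeta)}{8W}\Bigr)=\exp\Bigl(-a_\kappa b_\kappa\,\sfrac{\delta^2 m}{8}\,t_j^{\alpha/(1-\alpha)}(1+o(1))\Bigr),
\end{align*}
using $\bar f(\zeta)\sim\bar f(\kappa t_j)$ together with the identity $a_\kappa^2\bar f(\kappa t_j)/\kappa\sim a_\kappa b_\kappa\, t_j^{\alpha/(1-\alpha)}$. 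Summing over the $O(m)$ relevant cells, using $t_j\ge T_0$, and sending $\kappa\to\infty$ and then $m\to\infty$ gives the desired divergence.

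The hard part will be avoiding the logarithmic loss that a naive sub-partition of each cell $[t_j,t_{j+1}]$ followed by a pointwise finite-dimensional MDP and union bound would entail: under the weakest permissible growth of $a_\kappa$, the speed $a_\kappa b_\kappa$ can diverge arbitrarily slowly, so an extra $\log\kappa$ factor would not be absorbed. Exploiting the martingale structure of $w\mapsto T[\zeta,\zeta+w)-w$ through Doob's inequality is essential for obtaining a uniform-in-$w$ estimate at the correct exponential rate.
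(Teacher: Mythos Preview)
Your approach is correct but takes a genuinely different route from the paper's. The paper avoids the occupation-time martingale altogether: instead it proves an Ottaviani--L\'evy type maximal inequality
\[
\IP\Bigl(\sup_{t_{j-1}\le t<t_j}\bigl|\bar Z^{\ssup\kappa}_t-\bar Z^{\ssup\kappa}_{t_{j-1}}\bigr|>\delta\Bigr)\le 2\,\IP\bigl(\bigl|\bar Z^{\ssup\kappa}_{t_j}-\bar Z^{\ssup\kappa}_{t_{j-1}}\bigr|>\sfrac\delta2\bigr),
\]
using the stopping time $\tau=\inf\{t\ge t_{j-1}:|\bar Z^{\ssup\kappa}_t-\bar Z^{\ssup\kappa}_{t_{j-1}}|>\delta\}$ together with Chebyshev's inequality on the variance of $\bar Z^{\ssup\kappa}_{t_j}-\bar Z^{\ssup\kappa}_\tau$, which is of order $a_\kappa^{-2}\kappa^{(1-2\alpha)/(1-\alpha)}\bar\ell(\kappa)^{-1}\to 0$. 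This reduces the maximal probability to an \emph{endpoint} probability with no exponential loss, after which the already-established one-dimensional MDP (Lemma~\ref{MDPocc}) gives the rate $I_{[t_{j-1},t_j)}(\delta/2)\to\infty$ as $m\to\infty$.

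Your Doob-inequality argument on $w\mapsto T[\zeta,\zeta+w)-w$ achieves the same goal of passing from a supremum to a single exponential bound without logarithmic loss, and your residual-time and scaling computations are correct. The cost is that $\zeta=Z_{\kappa t_j}$ is random, so you must first restrict to the event $\{\zeta\in[\kappa t_j/2,\,2\kappa t_j]\}$ (whose complement is super-exponentially small on the MDP scale by the LDP) before invoking $\bar f(\zeta)\sim\bar f(\kappa t_j)$ uniformly; this is the one step you glossed over. The paper's approach is shorter and cleaner because it reuses the finite-dimensional MDP as a black box and works directly with $\bar Z^{\ssup\kappa}$ rather than passing to occupation times; yours is more self-contained and would also work in settings where an endpoint MDP is not yet available.
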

\medskip

\begin{proof}
We need to verify that, denoting by $\|\,\cdot\,\|$ the supremum
norm on any compact subset of~$(0,\infty)$, for every $\delta>0$,
$$\lim_{m\to\infty} \limsup_{\kappa\to\infty} a_\kappa^{-1}
b_\kappa^{-1} \log \IP\big( \|\bar Z^{\ssup \kappa}-f_m(\bar
Z^{\ssup \kappa}) \| > \delta \big) = - \infty.$$ The crucial step
is to establish that, for sufficiently large $\kappa$, for all
$j\geq 2$,
\begin{equation}\label{vare}\IP\Big(
\sup_{t_{j-1}\!\!\leq t< t_j} \big| \bar Z^{\ssup \kappa}_t-\bar
Z^{\ssup \kappa}_{t_{j-1}} \big| > \delta \Big) \leq 2 \, \IP\big(
\big| \bar Z^{\ssup \kappa}_{t_j}- \bar Z^{\ssup
\kappa}_{t_{j-1}}\big|
> \sfrac\delta 2 \big).
\end{equation}
To verify~\eqref{vare} we use the stopping time $\tau:=\inf\{t\geq
t_{j-1} \colon |\bar Z^{\ssup \kappa}_t-\bar Z^{\ssup
\kappa}_{t_{j-1}}| > \delta\}$. Note that
$$\begin{aligned}
\IP\big( \big| & \bar Z^{\ssup \kappa}_{t_j}-\bar Z^{\ssup
\kappa}_{t_{j-1}} \big| > \sfrac\delta2 \big)\\ & \geq  \IP\Big(
\sup_{t_{j-1}\le t< t_j}\big| \bar Z^{\ssup \kappa}_t-\bar
Z^{\ssup \kappa}_{t_{j-1}} \big| > \delta \Big) \, \IP\big( \big|
\bar Z^{\ssup \kappa}_{t_j}-\bar Z^{\ssup \kappa}_{t_{j-1}} \big| >
\sfrac\delta2 \, \big| \, \tau \leq t_j \big) ,
\end{aligned}$$
and, using Chebyshev's inequality in the last step,
$$\begin{aligned}
\IP\big( \big| \bar Z^{\ssup \kappa}_{t_j}-\bar Z^{\ssup
\kappa}_{t_{j-1}} \big| > \sfrac\delta2 \, \big| \, \tau \leq t_j
\big) & \geq \IP\big( \big| \bar Z^{\ssup \kappa}_{t_j}-\bar
Z^{\ssup \kappa}_{\tau} \big| \leq \sfrac\delta2 \, \big| \, \tau
\leq t_j \big)\geq 1 - \sfrac{4}{\delta^2} \, {\rm Var}\big( \bar
Z^{\ssup \kappa}_{t_j-t_{j-1}} \big).
\end{aligned}$$
As this variance is of order
$a_\kappa^{-2}\,\kappa^{\frac{1-2\alpha}{1-\alpha}}\,
{\bar\ell}(\kappa)^{-1} \rightarrow 0$, the right hand side exceeds
$\frac12$ for sufficiently large~$\kappa$, thus
proving~\eqref{vare}.
\smallskip

With \eqref{vare}  at our disposal, we observe that, for some
integers $n_1\geq n_0 \geq 2$ depending only on~$m$ and the chosen
compact subset of $(0,\infty)$,
$$\begin{aligned}
\IP\big( \|\bar Z^{\ssup \kappa}-f_m(\bar Z^{\ssup \kappa}) \| >
\delta \big) & \leq \sum_{j=n_0}^{n_1} \IP\Big(
\sup_{t_{j-1}\le t< t_j} \big|\bar Z^{\ssup \kappa}_{t}-\bar
Z^{\ssup \kappa}_{t_{j-1}} \big| > \delta \Big) \\
& \leq 2 \!\sum_{j=n_0}^{n_1} \IP\big( \big|\bar Z^{\ssup
\kappa}_{t_j}-\bar Z^{\ssup \kappa}_{t_{j-1}} \big| > \sfrac\delta2
\big) .
\end{aligned}$$
Hence, we get $$\begin{aligned} \limsup_{\kappa\to\infty}
a_\kappa^{-1} b_\kappa^{-1} \log \IP\big( \|\bar Z^{\ssup
\kappa}-f_m(\bar Z^{\ssup \kappa}) \| > \delta \big) & \le
-\inf_{j=n_0}^{n_1} I_{[t_{j-1},t_j)}(\sfrac\delta2),
\end{aligned}$$
and the right hand side can be made arbitrarily small by making
$m=\frac1{t_j-t_{j-1}}$ large.
\end{proof}
\medskip

\begin{proof}[ of Theorem~\ref{MDP}]
We apply \cite[Theorem 4.2.23]{DemZei98} to transfer
the large deviation principle from the topological Hausdorff space
${\mathcal P}(0,\infty)$ to the metrizable space ${\mathcal
L}(0,\infty)$ using the sequence $f_m$ of continuous functions.
There are two conditions to be checked for this, on the one hand
that $(f_m(\bar Z^{\ssup \kappa}))_{m\in\IN}$  are exponentially
good approximations of $(\bar Z^{\ssup \kappa})$, as verified in
Lemma~\ref{ea}, on the other hand that
$$\limsup_{m\to\infty} \sup_{I(x)\le\eta} d\big( f_m(x), x
\big) =0,$$ for every $\eta>0$, where $d$ denotes a suitable metric
on ${\mathcal L}(0,\infty)$. This follows easily from the
equicontinuity of the set $\{I(x)\leq \eta\}$ established in
Lemma~\ref{AA}. Hence the proof is complete.~\end{proof}

\section{The vertex with maximal indegree}\label{sec6}

In this section we prove Theorem~\ref{main} and Theorem~\ref{WL}.

{\subsection{Strong and weak preference: Proof of Theorem~\ref{main}}}

The key to the proof is Proposition~\ref{p:A_s} which shows that, in the strong
preference case, the degree of a fixed vertex can only be surpassed by a finite 
number of future vertices. The actual formulation of the result also contains
a useful technical result for the weak preference case.

Recall that $\vphi_t=\int_0^t \frac1{\bar f(v)}\, dv$, and let
$$t(s)=\sup\{t\in\IS \colon 4 \vphi_t\leq s\}, \quad\mbox{ for } s\ge 0.$$
Moreover, we let $\vphi_\infty=\lim_{t\to\infty} \vphi_t$, which is finite
exactly in the strong preference case. In this case $t(s)=\infty$ eventually.

\begin{prop}\label{p:A_s}
For any fixed $\eta>0$, almost surely only finitely many of the events
$$A_s:=\big\{\exists t'\in [s,t(s))\cap\IT: Z[s,t']\geq t'-\eta\big\}, \quad\mbox{ for }s\in\IT,$$
occur.
\end{prop}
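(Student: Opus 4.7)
The plan is to show $\IP(A_s) \leq e^{-\gamma s}$ for some fixed $\gamma > 1$ and all $s$ sufficiently large, and then to conclude by Borel--Cantelli along the countable set $\IT$. Since $\IT = \{\Psi(n) : n \in \IN\}$ with $\Psi(n) \sim \log n$, this tail estimate produces the convergent series $\sum_{s \in \IT} \IP(A_s) \leq C \sum_n n^{-\gamma} < \infty$, whence almost surely only finitely many $A_s$ occur.

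First I would reformulate $A_s$ via the martingale from Lemma~\ref{degs}: writing $M_t := Z[s,t] - (t-s)$ gives a martingale with $M_s = 0$, so that
$$A_s = \Bigl\{\sup\bigl\{M_t : t \in [s, t(s)) \cap \IT\bigr\} \geq s - \eta \Bigr\}.$$
Next I would control the predictable bracket. Equation~\eqref{eq0726-2} gives $\Delta \langle M\rangle_t \leq \Delta t / \bar f(Z[s,t])$, and the argument in the proof of Lemma~\ref{degs} then yields, for every $\eps > 0$, an almost surely finite random variable $\eta_\eps$ with $\langle M\rangle_t \leq \vphi_{t-s}/(1-\eps) + \eta_\eps$. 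On $t \in [s, t(s)]$ this is at most $s/(4(1-\eps)) + \eta_\eps$, thanks to the defining inequality $4 \vphi_{t(s)} \leq s$.

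The heart of the proof is an exponential martingale deviation inequality for $M$, whose jumps are bounded by $1/f(0)$ and whose quadratic variation on $[s, t(s)]$ is at most $\approx s/4$. A Freedman- or Bernstein-type bound, or more naturally an optimisation in $\theta$ of the exponential supermartingale
$$\exp\Bigl(\theta M_t - \int_s^t \bar f(Z[s,u])\,\psi\bigl(\theta/\bar f(Z[s,u])\bigr)\,du\Bigr), \qquad \psi(x) = e^x - 1 - x,$$
which is tailored to the compensated-Poisson structure of $M$, is expected to deliver the required tail bound. The factor~$4$ in the definition of $t(s)$ is tuned precisely so that the formal Gaussian exponent $a^2/(2V) \sim s^2/(s/2) = 2s$ leaves a comfortable margin $\gamma > 1$ after absorbing non-Gaussian corrections.

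The main obstacle is this last step. Two subtleties need care: (i) the a.s.\ bracket bound involves a random constant $\eta_\eps$, so one has to localise on a high-probability event where $\langle M\rangle$ is deterministically close to $\vphi_{t-s}$, with the exceptional event contributing a separately summable probability in $s$; and (ii) since the jump size $1/f(0)$ need not be small, a direct Bernstein estimate does not automatically deliver $\gamma > 1$. Overcoming~(ii) would require either a careful choice of $\theta$ in the exponential supermartingale above, or splitting $[s, t(s)]$ into a short initial window (where $Z[s,\cdot]$ is still small and the jumps of $M$ are of order $1/f(0)$) and a main window (where $Z[s,u] \approx u - s$ is already large, so the jumps $1/\bar f(Z[s,u])$ are small and the Bernstein correction becomes negligible).
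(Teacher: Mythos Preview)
Your overall architecture---an exponential tail bound $\IP(A_s)\le e^{-\gamma s}$ with $\gamma>1$, followed by Borel--Cantelli over $\IT$---matches the paper exactly. The divergence is in the choice of martingale, and this is precisely what resolves the two obstacles you flag.

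The paper does \emph{not} use the time-indexed martingale $M_t=Z[s,t]-(t-s)$. Instead it introduces the \emph{degree-indexed} martingale $\bar T^s_u:=u-T_s[0,u)$ for $u\in\IS$, where $T_s[0,u)$ is the time $Z[s,\cdot]$ spends in $[0,u)$. One checks (Lemma~\ref{l:T_mart}, via an elementary computation for geometric-type waiting times) that $(\bar T^s_u)_{u\in\IS}$ is a martingale in its natural filtration with
\[
\var(\bar T^s_{u_+}\mid\cG_u)\le\frac{1}{\bar f(u)^2},\qquad \bar T^s_{u_+}-\bar T^s_u\le\frac{1}{\bar f(u)}.
\]
Both bounds are \emph{deterministic}, so your obstacle~(i) vanishes: no localisation on the path is needed, and the sum of conditional variances over $u\in\IS\cap[0,t(s))$ is exactly $\vphi_{t(s)}\le s/4$. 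For obstacle~(ii), one fixes $u_0$ with $\bar f(u_0)\ge2$, so that the positive increments beyond $u_0$ are uniformly $\le\tfrac12$; the contribution up to $u_0$ is the deterministic bound $\bar T^s_{u_0}\le u_0$, absorbed into the deviation level. The inclusion $A_s\subset\{\sup_{u<t(s)}\bar T^s_u\ge s-\eta\}$ holds because if $\nu=Z[s,\sigma]\ge\sigma-\eta$ at the entry time $\sigma=T_s[0,\nu)+s$, then $\bar T^s_\nu=\nu-(\sigma-s)\ge s-\eta$.

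With deterministic variance and increment bounds in hand, a Bernstein-type maximal inequality (Lemma~\ref{l:hoef}) applies directly and yields $\limsup_{s\to\infty}-\tfrac{1}{s}\log\IP(A_s)\ge\tfrac65$, comfortably above~$1$. Your time-indexed route is not wrong in principle, but the random bracket and the constant jump size $1/f(0)$ force exactly the extra work you anticipate; switching to the occupation-time martingale indexed by degree is the clean way through.
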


For the proof we identify a family of martingales and then apply the concentration
inequality for martingales, Lemma~\ref{l:hoef}. For $s\in\IT$, let 
$(\bar T^s_u)_{u\in\IS}$ be given by $\bar T_u^s=u-T_s[0,u)$, where $T_s[u,v)$
is the time spent by the process $Z[s,\,\cdot\,]$ in the interval $[u,v)$.

The following lemma is easy to verify.

\begin{lemma}\label{l:var}
Let $(t_i)_{i\iZ_+}$ be a strictly increasing sequence of nonnegative numbers with $t_0=0$ and 
$\lim_{i\to\infty} t_i=\infty$. Moreover, assume that $\lam>0$ is fixed such that
$\lam\,\Delta t_i := \lam \,(t_i-t_{i-1})\leq 1$, for all $i\iN$, and consider a discrete 
random variable $X$ with
$$\IP\big(X=t_i\big)= \lam \Delta t_i\, \prod_{j=1}^{i-1} (1-\lam \Delta t_j)  \qquad \mbox{ for }i\iN .$$
Then
$$\IE[X]=\frac1\lam \qquad \text{ and } \qquad \var (X)\leq \frac 1{\lam^2}.$$
\end{lemma}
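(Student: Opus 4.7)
The plan is to introduce the survival probabilities
$P_i:=\IP(X\geq t_i)=\prod_{j=1}^{i-1}(1-\lam\Delta t_j)$ for $i\iN$,
with the convention $P_1=1$, and to exploit the single telescoping identity
$\lam\Delta t_i\, P_i=P_i-P_{i+1}$, which is the algebraic engine behind both statements.
Throughout, the tail bound $P_i\leq \prod_{j<i} e^{-\lam\Delta t_j}= e^{-\lam t_{i-1}}$
(from $1-y\leq e^{-y}$) together with $t_i\to\infty$ will make every boundary term
in the summation-by-parts manipulations vanish.

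For the mean, a standard tail-sum computation gives
$\IE[X]=\sum_{i\geq 1}\Delta t_i\,P_i$. The telescoping identity rewrites the
right-hand side as $\frac1\lam\sum_{i\geq 1}(P_i-P_{i+1})=\frac1\lam(1-\lim_{i\to\infty}P_i)$,
and the exponential tail bound forces $P_i\to 0$, so $\IE[X]=1/\lam$.

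For the second moment I would use the same tail trick. Writing $t_i^2-t_{i-1}^2=(t_i+t_{i-1})\Delta t_i$ gives
\[
\IE[X^2]=\sum_{i\geq 1}(t_i^2-t_{i-1}^2)\,P_i = \sum_{i\geq 1} t_i\Delta t_i\, P_i + \sum_{i\geq 1} t_{i-1}\Delta t_i\, P_i.
\]
The first sum, after the same substitution $\lam\Delta t_i P_i=P_i-P_{i+1}$, telescopes to
$\frac1\lam\sum_i t_i(P_i-P_{i+1})=\IE[X]/\lam=1/\lam^2$. For the second sum, an analogous
summation by parts converts it to $\frac1\lam\sum_{i\geq 2}\Delta t_{i-1}\,P_i$, and the
pointwise monotonicity $P_i\leq P_{i-1}$ bounds it by
$\frac1\lam\sum_{j\geq 1}\Delta t_j\,P_j=1/\lam^2$. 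Adding the two contributions gives
$\IE[X^2]\leq 2/\lam^2$, hence $\var(X)=\IE[X^2]-(\IE[X])^2\leq 1/\lam^2$.

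The whole computation is purely algebraic, so there is no genuine obstacle; the only thing
that needs care is keeping the index shifts straight in the two Abel-summation steps and
checking the boundary terms $t_N P_{N+1}\leq t_N e^{-\lam t_N}\to 0$. The constraint $\lam\Delta t_i\leq 1$
is used only to guarantee that the factors $1-\lam\Delta t_j$ are nonnegative, so that $P_i$ is
genuinely a survival probability.
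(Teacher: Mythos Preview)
Your proof is correct. The paper does not give a proof of this lemma at all, stating only that it ``is easy to verify'', so there is nothing to compare against; your tail-sum/Abel-summation approach with the telescoping identity $\lam\Delta t_i P_i = P_i - P_{i+1}$ and the monotonicity bound $P_i\le P_{i-1}$ is a clean and complete way to carry out that verification.
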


With this at hand, we can identify the martingale property of~$(\bar T^s_u)_{u\in\IS}$.

\begin{lemma}\label{l:T_mart}
For any $s\in\IS$, the process $(\bar T^s_u)_{u\in\IS}$ is a martingale with respect to the natural 
filtration~$(\cG_u)$.  Moreover, for two neighbours $u<u_+$ in $\IS$, one has
$$\var \big(\bar T^s_{u_+}\,|\,\cG_u \big)\leq \frac1{\bar f(u)^2}. $$
\end{lemma}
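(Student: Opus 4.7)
The plan is to express the increment $\bar T^s_{u_+} - \bar T^s_u$ in terms of the waiting time $T_s[u]$ of $Z[s,\cdot]$ at level $u$, identify the conditional distribution of this waiting time from the network dynamics, and then invoke Lemma~\ref{l:var} to obtain both assertions at once.

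First, since $u$ and $u_+$ are neighbours in $\IS = \{\Phi(j) : j \in \IZ_+\}$, one has $u_+ - u = 1/\bar f(u)$, and hence
$$\bar T^s_{u_+} - \bar T^s_u = (u_+ - u) - T_s[u] = \frac{1}{\bar f(u)} - T_s[u].$$
Both the martingale identity and the variance bound therefore reduce to the single claim that, conditionally on $\cG_u$,
$$\IE[T_s[u] \mid \cG_u] = \frac{1}{\bar f(u)}, \qquad \var(T_s[u] \mid \cG_u) \leq \frac{1}{\bar f(u)^2}.$$

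Conditionally on $\cG_u$, the entry time $\tau_u$ of $Z[s,\cdot]$ into $u$ is measurable; writing $n_0 := \Psi^{-1}(\tau_u)$ and $\Delta s_j := 1/(n_0 + j)$ for the successive artificial time increments after $\tau_u$, the evolution rule from Section~\ref{12} tells us that, while the process sits at $u$, at the $i$-th subsequent step it jumps to $u_+$ with probability $\bar f(u)\,\Delta s_{i-1}$, independently of whether it has jumped at earlier steps. Thus $T_s[u]$ is precisely of the discrete waiting-time form treated in Lemma~\ref{l:var}, with parameter $\lambda = \bar f(u)$ and cumulative step sequence $t_i = \sum_{j=0}^{i-1}\Delta s_j$, and the lemma delivers the required mean and variance bound in one shot.

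The one nontrivial point is to verify the hypothesis $\lambda\,\Delta t_i \leq 1$ of Lemma~\ref{l:var}, i.e.\ $\bar f(u) \leq n_0 + i - 1$ for all $i \geq 1$. Writing $j = \Phi^{-1}(u)$ for the natural indegree corresponding to $u$, the vertex labelled $\Psi^{-1}(s)$ has accumulated indegree $j$ by natural time $n_0$, which forces $j \leq n_0 - 1$, so that the standing assumption $f(k) \leq k+1$ yields $\bar f(u) = f(j) \leq j+1 \leq n_0$. This is the only place where anything beyond the definitions enters, and I do not expect any genuine obstacle; the substance of the argument lies entirely in recognising $T_s[u]$ as an instance of the distribution in Lemma~\ref{l:var}.
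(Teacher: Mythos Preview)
Your proof is correct and follows essentially the same approach as the paper: both recognise that, conditionally on $\cG_u$, the occupation time $T_s[u]$ has the discrete waiting-time law of Lemma~\ref{l:var} with parameter $\lambda=\bar f(u)$, and then read off the mean and variance bound. You supply one detail the paper omits, namely the verification of the hypothesis $\lambda\,\Delta t_i\le 1$ via the standing assumption $f(k)\le k+1$; this is a welcome addition rather than a deviation.
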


\begin{proof}
Fix two neighbours $u<u_+$ in $\IS$ and observe that given $\cG_u$ (or given the entry time 
$T_s[0,u)+s$ into state $u$) the distribution of $T_s[u]$ is as in Lemma~\ref{l:var} with 
$\lam=\bar f(u)$. Thus the lemma implies that
$$
\IE[\bar T^s_{u+}|\cG_u]= \bar T^s_u +\frac1{f(u)}- \IE[T_s[u]\,|\,\cG_u]=\bar T^s_u
$$
so that $(\bar T_u^s)$ is a martingale. The variance estimate of Lemma~\ref{l:var} yields 
the second assertion.
\end{proof}

\begin{proof}[ of Proposition~\ref{p:A_s}] 
We fix $\eta\geq 1/f(0)$ and $u_0\in\IS$ with $\bar f(u_0)\geq 2$.
We consider $\IP(A_s)$ for sufficiently large $s\in\IT$. More precisely, 
$s$ needs to be large enough such that $t(s)\geq u_0$ and $s-\eta-u_0\geq \sqrt{s/2}$.
We denote by $\sig$ the first time $t$ in $\IT$ for which $Z[s,t]\geq t-\eta$, if such 
a time exists, and  set $\sig=\infty$  otherwise.

We now look at realizations for which $\sig\in [s,t(s))$ or, equivalently, $A_s$ occurs. We set $\nu=Z[s,\sig]$. Since the jumps of $Z[s,\cdot]$ are bounded by $1/f(0)$ we conclude that
$$
\nu\leq \sig-\eta+1/{f(0)}\leq \sig.
$$
Conversely, $T_s[0,\nu)+s$ is the entry time into state $\nu$ and thus equal to $\sig$; therefore,
$$
\nu=Z[s,\sig]\geq T_s[0,\nu)+s-\eta,
$$
and thus
$\bar T^s_\nu=\nu-T_s[0,\nu)\geq s-\eta.$
Altogether, we conclude that
$$A_s\subset\big\{ \exists u\in[0,t(s))\cap\IS \colon \bar T^s_u\geq s-\eta \big\}.$$
By Lemma \ref{l:T_mart} the process $(\bar T^s_u)_{u\in\IS}$ is a martingale. 
Moreover, for consecutive elements $u<u_+$ of $\IS$ that are larger than $u_0$, one has
$$\var(\bar T^s_{u+}|\cG_u)= \frac1{\bar f(u)^2}, \qquad \bar T^s_{u+}-\bar T^s_{u}\leq \frac1{\bar f(u)}
\leq \frac12, \qquad  \text{ and } \qquad \bar T^s_{u_0}\leq u_0.$$
Now we apply the concentration inequality, Lemma~\ref{l:hoef}, and obtain, writing
$\lam_s=s-\eta-u_0-\sqrt{2\vphi_{t(s)}}\geq 0$, that
\begin{align*}
\IP(A_s) & \leq\IP\big(\sup _{u\in[0,t(s))\cap \IS}  \bar T^s_u \geq s-\eta\big)\\
&\leq\IP\big(\sup _{u\in[u_0,t(s))\cap \IS}  \bar T^s_u-\bar T^s_{u_0} \geq s-\eta-u_0\big)
\leq 2 \exp\Bigl(-\frac{\lam_s^2}{2(\vphi_{t(s)} + \lam_s/6)}\Bigr),
\end{align*}
where we use that
$$\sum_{u\in\IS\cap[0,t(s))} \frac1{\bar f(u)^2} = \vphi_{t(s)}.$$
As $\vphi_{t(s)}\leq s/4$, we obtain $\limsup- \frac1s\, \log \IP(A_s) \geq \frac65.$
Denoting by $\iota(t)= \max[0,t]\cap\IT$, we finally get that
$$
\sum_{s\in\IT} \IP(A_s)  \leq  \int_{0}^\infty e^{s} \, \IP(A_{\iota(s)})\,ds<\infty,
$$
so that by Borel-Cantelli, almost surely, only finitely many of the events $(A_s)_{s\in\IT}$ occur.
\end{proof}
\medskip

\begin{proof}[ of Theorem \ref{main}]
We first consider the weak preference case and fix~$s\in\IT$.
Recall that $(Z[s,t]-(t-s))_{t\ge s}$ and $(Z[0,t]-t)_{t\ge 0}$ are independent and satisfy functional central limit theorems (see Theorem \ref{CLT}).
Thus $(Z[s,t]-Z[0,t])_{t\ge s}$ also satisfies a central limit theorem, i.e.\ an appropriately scaled version converges weakly  to the Wiener process. Since the Wiener process changes its sign almost surely for arbitrarily large times, we conclude that $Z[s,t]$ will be larger, respectively smaller, than $Z[0,t]$ for infinitely many time instances. Therefore, $s$ is not a persistent hub, almost surely. This proves the first assertion.
\medskip

In the strong preference case recall that $\vphi_\infty<\infty$. 
For fixed  $\eta>0$, almost surely, only finitely many of the events $(A_s)_{s\in\IT}$ occur,
by Proposition \ref{p:A_s}. Recalling that $Z[0,t]-t$ has a finite limit, we thus get that almost surely only finitely many degree evolutions overtake the one of the first node. It remains to show that the limit points of $(Z[s,t]-t)$ for varying~$s\in\IT$ are almost surely distinct. 
But this is an immediate consequence of Proposition \ref{prop_ac}.
\end{proof}

{\subsection{The typical evolution of the hub: Proof of Theorem~\ref{WL}}}

From now on we assume that the attachment rule $f$ is regularly varying with index $\alpha<\frac12$, and we represent $f$ and $\bar f$ as
$$f(u)=u^\alpha\ell(u) \quad\text{ and }\quad \bar f(u)=u^{\frac\alpha{1-\alpha}} \bar \ell(u)\qquad \mbox{ for } u>0.$$
Moreover, we fix 
$$a_\kappa=\kappa^{\frac{1-2\alpha}{1-\alpha}} \bar \ell(\kappa)^{-1}.$$
For this choice of $(a_\kappa)$ the moderate deviation principle, Theorem \ref{MDP}, 
leads to the speed $(a_\kappa)$, in other words the magnitude of the deviation and 
the speed coincide. The proof of Theorem~\ref{WL} is based  on the following lemma.
\medskip

\begin{lemma}\label{le0924-2}
Fix $0\leq u<v$ and define $\II_\kappa$ as $\II_\kappa= \IT\cap [a_\kappa u,a_\kappa v)$.
Then, for all $\eps>0$,
$$\lim_{\kappa\to \infty} \IP \Bigl(\max_{s\in \II_\kappa} Z[s,\kappa]  \in \kappa+ a_\kappa\Bigl[-v+\sqrt{\sfrac{2-2\alpha}{1-2\alpha}v}-\eps, -u+\sqrt{\sfrac{2-2\alpha}{1-2\alpha}v}+\eps\Bigr]\Bigr)=1.
$$
\end{lemma}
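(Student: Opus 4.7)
The plan is to combine the moderate deviation principle of Theorem~\ref{MDP} with a counting argument on the exponentially growing index set $\II_\kappa$. With the choice $a_\kappa=\kappa^{(1-2\alpha)/(1-\alpha)}/\bar\ell(\kappa)$ one checks $c=\lim a_\kappa\kappa^{(2\alpha-1)/(1-\alpha)}\bar\ell(\kappa)=1$, so the MDP holds with speed $a_\kappa$ and the good rate $I(x)=\tfrac12\int_0^\infty \dot x_t^{\,2}\,t^{\alpha/(1-\alpha)}\,dt-f(0)x_0$. For $s=a_\kappa w$ the quantity $(Z[s,\kappa]-(\kappa-s))/a_\kappa$ is the value at $t^*=(\kappa-s)/\kappa\to 1$ of the process in Theorem~\ref{MDP}, so a routine Euler--Lagrange computation (minimiser $\dot x_t=Ct^{-\alpha/(1-\alpha)}$) contracts $I$ to
\[
I^*(y)=\tfrac{1-2\alpha}{2(1-\alpha)}\,y^2,\qquad y\geq 0,
\]
together with the algebraic identity $I^*(A)=v$ that drives the rest of the argument. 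Using $\Psi^{-1}(t)\sim e^t$ and $\Delta\Psi(n)=1/n$, the counting measure of $\IT\cap[a_\kappa w,a_\kappa(w+dw))$ behaves like $a_\kappa e^{a_\kappa w}\,dw$.

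For the \emph{upper bound}, a union bound gives
\[
\IP\Bigl(\max_{s\in\II_\kappa}Z[s,\kappa]>\kappa+a_\kappa(-u+A+\eps)\Bigr)\,\lesssim\,\int_u^v a_\kappa\exp\!\big(a_\kappa\bigl[w-I^*(w-u+A+\eps)\bigr]\big)\,dw,
\]
and the task is to verify that $w\mapsto w-I^*(w-u+A+\eps)$ is strictly negative, uniformly on $[u,v]$, for every $\eps>0$. This reduces to convex analysis of the quadratic $\tfrac{1-2\alpha}{2(1-\alpha)}(w-u+A+\eps)^2-w$: the identity $I^*(A)=v$ kills the leading contributions at $w=u$ and $w=v$, leaving positive remainders of order $\eps$, and convexity of the quadratic combined with a short case analysis of the location of its minimiser (interior critical point versus boundary) produces a uniform negative upper bound on the exponent, so the integral decays exponentially.

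For the \emph{lower bound}, the main obstacle is to overcome the correlations between the $Z[s,\cdot]$ across different birth times $s$. My approach is to work on an enlarged probability space carrying, for each $s\in\II_\kappa$, an independent copy $\tilde Z[s,\cdot]$ built from its own family of independent ${\sf Exp}(f(u))$ variables, as in the construction preceding Lemma~\ref{le1025-1}. The coupling furnished by Lemma~\ref{le1025-1} applied to each $s$, combined with a union bound using the Gaussian tail $4e^{-\lam^2/(2K)}$, shows that $\max_{s\in\II_\kappa}|Z[s,\kappa]-\tilde Z[s,\kappa]|=o(a_\kappa)$ with high probability, since the total failure probability is at most of order $|\II_\kappa|\cdot e^{-c a_\kappa^2}\le e^{a_\kappa v- c a_\kappa^2}\to 0$. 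Now restricting attention to $s\in\IT\cap[a_\kappa(v-\delta),a_\kappa v]$ for small $\delta>0$ and noting $I^*(A-\eps)<I^*(A)=v$, the MDP applied to each $\tilde Z[s,\kappa]$ combined with the entropy $e^{a_\kappa w}$ yields an expected number of ``successes'' $\{\tilde Z[s,\kappa]\geq \kappa+a_\kappa(-v+A-\eps)\}$ growing like $e^{\eta a_\kappa}$ for some $\eta=\eta(\eps)>0$; the independence of the $\tilde Z[s,\cdot]$ then lets a standard second-moment argument force at least one success with probability tending to one, and transferring back to the true process via the coupling error bound completes the proof.
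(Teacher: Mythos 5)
Your \emph{upper} bound is essentially correct and close to the paper's own argument. Writing $A=\sqrt{\sfrac{2-2\alpha}{1-2\alpha}v}$, the paper simply bounds each term $\IP\big(Z[s,\kappa]\geq\kappa+a_\kappa(-u+A+\eps)\big)$ by its value at the earliest birth time $s_{\min}\approx a_\kappa u$ and multiplies by $\#\II_\kappa\approx e^{a_\kappa v}$, giving the exponent $v-I^*(A+\eps)<0$ in one line; your pointwise balance $w-I^*(w-u+A+\eps)$ is dominated by that same quantity for every $w\in[u,v]$ (since $w\leq v$ and $I^*$ is increasing on $[0,\infty)$), so your case analysis of the parabola, while correct, is not needed. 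The only point to keep in mind is that the single-vertex estimate must be uniform in $s$, which is supplied by Proposition~\ref{expoapp} together with Lemma~\ref{le1025-1}.

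The \emph{lower} bound, however, has a genuine gap, and it sits exactly at the point you call ``the main obstacle.'' You cannot manufacture mutually independent copies $\tilde Z[s,\cdot]$, each coupled to the true $Z[s,\cdot]$ with small error, by invoking Lemma~\ref{le1025-1} ``for each $s$'': that lemma builds, for one fixed $s$, a coupling of the pair $(T_s,T)$, and a collection of marginal couplings does not assemble into a joint coupling whose targets are independent. If the $Z[s,\cdot]$ were strongly correlated (say, all equal), no such joint coupling could exist, because the maximum of $e^{a_\kappa v}$ independent copies exceeds any single copy by a macroscopic amount. What rescues the argument is that in this model there are \emph{no} correlations to overcome: the new vertex connects to each old vertex independently, so each $\cZ[m,\cdot]$ is a Markov chain driven by its own randomness, and the evolutions of distinct vertices are exactly independent. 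This is precisely what the paper exploits: $\IP\big(\max_{s\in\II_\kappa}Z[s,\kappa]<x\big)=\prod_{s\in\II_\kappa}\IP\big(Z[s,\kappa]<x\big)$ holds as an identity, and both halves of the lemma then follow from the single-vertex moderate deviation estimate (Lemma~\ref{MDPocc} with Lemma~\ref{le1025-1}) and the count $\log\#\II_\kappa\sim a_\kappa v$, with no second-moment method and no auxiliary copies. Once you record this independence your detour becomes justified --- and simultaneously superfluous.
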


\begin{proof}
Our aim is to analyze the random variable $\max_{s\in \II_\kappa} Z[s,\kappa]$ for large $\kappa$. We fix $\zeta \geq-u$ and observe that
\begin{align}\begin{split}\label{eq0809-1}
\IP(\max_{s\in\II_\kappa} & Z[s,\kappa]< \kappa +a_\kappa \zeta)= \prod_{s\in \II_\kappa} \IP(Z[s,\kappa]< \kappa +a_\kappa \zeta)\\
&\begin{cases}
\leq \IP(Z[s_\ma,\kappa]< \kappa +a_\kappa \zeta)^{\# \II_\kappa}=\IP(T_{s_\ma}[0,\kappa+a_\kappa\zeta)+s_{\ma}>\kappa)^{\# \II_\kappa}\\
\geq \IP(Z[s_\mi,\kappa]< \kappa +a_\kappa \zeta)^{\# \II_\kappa}=\IP(T_{s_\mi}[0,\kappa+a_\kappa\zeta)+s_{\mi}>\kappa)^{\# \II_\kappa},
\end{cases}\end{split}
\end{align}
where $s_\mi$ and $s_\ma$ denote the minimal and maximal element of $\II_\kappa$.

Next, we observe that $\lim_{\kappa\to\infty} s_\ma/a_\kappa= v$ and $\lim_{\kappa\to\infty} s_\mi/a_\kappa= u$. 
Consequently, we can deduce from the moderate deviation principle, Lemma~\ref{MDPocc}, together with Lemma~\ref{le1025-1}, that
\begin{align}\begin{split}\label{eq0809-2}
\log \IP(T_{s_\ma}[0,\kappa+a_\kappa\zeta)+s_{\ma}\leq \kappa)&=\log \IP\Bigl(\frac{T_{s_\ma}[0,\kappa+a_\kappa\zeta)-\kappa-a_\kappa \zeta}{a_\kappa}\leq -\frac {s_{\ma}}{a_\kappa} -\zeta\Bigr)\\
&\sim -a_\kappa I_{[0,1)}(-v-\zeta)=-a_\kappa \frac12 \frac{1-2\alpha}{1-\alpha} (v+\zeta)^2
\end{split}
\end{align}
and analogously that
$$
\log \IP(T_{s_\mi}[0,\kappa+a_\kappa\zeta)+s_{\mi}\leq \kappa)\sim -a_\kappa \frac12 \frac{1-2\alpha}{1-\alpha} (u+\zeta)^2.
$$

Next we prove that
$\IP(\max _{s\in\II_\kappa} Z[s,\kappa]<\kappa+a_\kappa\zeta)$
tends to $0$ when $\zeta<-v+\sqrt{\frac{2-2\alpha}{1-2\alpha}v}$. 

If $\zeta<-u$, then the statement is trivial since  by the moderate deviation principle
$\IP(Z[s_\mi,\kappa]<\kappa+ a_\kappa \zeta)$
tends to zero. Thus we can assume that $\zeta\geq -u$. By~(\ref{eq0809-1}) one has
$$\IP\big(\max_{s\in\II_\kappa} Z[s,\kappa]<\kappa+a_\kappa\zeta\big)
\leq \exp\bigl\{ \# \II_\kappa \log \big(1-\IP(T_{s_\ma}[0,\kappa+a_\kappa\zeta)+s_\ma \leq \kappa)\big)\bigr\}.
$$
and it suffices to show that the term in the exponential tends to $-\infty$ in order to prove the assertion. The term satisfies
\begin{align*}
\# \II_\kappa \log & \big(1-\IP(T_{s_\ma}[0,\kappa+a_\kappa\zeta)+s_\ma \leq \kappa)\big)\\
&\sim -\#\II_\kappa \,\IP(T_{s_\ma}[0,\kappa+a_\kappa\zeta)+s_\ma \leq \kappa)\\
& = -\exp \Bigl\{a_\kappa\Bigl[ \underbrace{\frac1{a_\kappa}\log \#\II_\kappa +\frac1{a_\kappa}\log \IP(T_{s_\ma}[0,\kappa+a_\kappa\zeta)+s_\ma \leq \kappa)}_{=:c_\kappa}\Bigr]\Bigr\}.
\end{align*}
Since $\frac1{a_\kappa} \log \#\II_\kappa$ converges to $v$, we conclude with (\ref{eq0809-2}) that
$$
\lim_{\kappa\to\infty} c_\kappa= v- \sfrac{1-2\alpha}{2-2\alpha} (v+\zeta)^2.
$$
Now elementary calculus implies that the limit is bigger than $0$ by choice
of~$\zeta$. This implies the first part of the assertion.

It remains to prove that $\IP(\max _{s\in\II_\kappa} Z[s,\kappa]<\kappa+a_\kappa\zeta)$
tends to $1$ for $\zeta>-u+\sqrt{\frac{2-2\alpha}{1-2\alpha}v}$.
Now
$$
\IP\big(\max_{s\in\II_\kappa} Z[s,\kappa]<\kappa+a_\kappa\zeta\big)
\geq \exp\Bigl\{ \#\II_\kappa \log \big(1-\IP(T_{s_\mi}[0,\kappa+a_\kappa\zeta)+s_\mi \leq \kappa)\big)\Bigr\}
$$
and it suffices to show that the expression in the exponential tends to $0$. As above we conclude that
\begin{align*}
\#\II_\kappa &\log  (1-\IP(T_{s_\mi}[0,\kappa+a_\kappa\zeta)+s_\mi \leq \kappa))\\
&\sim  -\exp \bigl(a_\kappa\bigl[ \underbrace{\frac1{a_\kappa}\log \#\II_\kappa +\frac1{a_\kappa}\log \IP(T_{s_\mi}[0,\kappa+a_\kappa\zeta)+s_\mi \leq \kappa)}_{=:c_\kappa}\bigr]\bigr).
\end{align*}
We find convergence
$$
\lim_{\kappa\to\infty} c_\kappa= v- \sfrac{1-2\alpha}{2-2\alpha}\, (u+\zeta)^2
$$
and (as elementary calculus shows) the limit is negative by choice of $\zeta$.
\end{proof}

For $s\in\IT$ and $\kappa>0$ we denote by $\bar Z^{\ssup{s,\kappa}}=(\bar Z^{\ssup{s,\kappa}}_t)_{t\geq0}$ the random evolution given by
$$\bar Z^{\ssup{s,\kappa}}_t=\frac{Z[s,s+\kappa t]-\kappa t}{a_\kappa}.$$
Moreover, we let
$$z=(z_t)_{t\ge 0}=\Bigl(\sfrac{1-\alpha}{1-2\alpha} \bigl(t^{\frac{1-2\alpha}{1-\alpha}}\wedge 1\bigr)\Bigr)_{t\geq 0}.
$$

\begin{proof}[ of Theorem~\ref{WL}]
\emph{1st Part:} By Lemma \ref{le0924-2} the maximal indegree is related to the unimodal function~$h$ defined by
$$h(u)=-u+\sqrt {\sfrac{2-2\alpha}{1-2\alpha} u}, \qquad \mbox{ for $u\geq0$.}$$
$h$ attains its unique maximum in  $u_\ma=\frac12\frac{1-\alpha}{1-2\alpha}$ and $h(u_\ma)=u_\ma$.
We fix  $c>4\frac{1-\alpha}{1-2\alpha}$, let $\zeta=\max [h(u_\ma)- h(u_\ma\pm \eps)]$ and decompose the set 
$[0,u_\ma-\eps)\cup [u_\ma+\eps,c)$ into finitely many disjoint intervals $[u_i,v_i)$  $i\in\IIJ$, 
with mesh smaller than $\zeta/3$. Then for the hub $s^*_\kappa$ at time $\kappa>0$ one has
\begin{align}\begin{split} \label{eq1008-1}
\IP(&s^*_\kappa \in a_\kappa [u_\ma-\eps,u_\ma+\eps))\\
 &\geq \IP\Bigl(\max_{s\in a_\kappa  [u_{\ma}-\eps,u_{\ma})\cap\IT} Z[s,\kappa ] \geq \kappa + a_\kappa  (h(u_\ma)-\zeta/3)\Bigr) \\
 &\qquad \times \prod_{i\in\IIJ} \IP\Bigl(\max_{s\in a_\kappa  [u_i,v_{i})\cap\IT} Z[s,\kappa ] \leq \kappa + a_\kappa  (h(v_i)+\zeta/2)\Bigr)\\
 &\qquad \times \IP\Bigl(\max_{s\in [c\, a_\kappa,\infty)\cap\IT } Z[s,\kappa ]\leq \kappa \Bigr).
\end{split}\end{align}
By Lemma~\ref{le0924-2} the terms in the first two lines on the right terms converge to $1$.
Moreover, by Proposition~\ref{p:A_s} the third term converges to $1$, if for all sufficiently large $\kappa$ and $\kappa_+=\min [\kappa,\infty)\cap\IS$, one has  $4\vphi_{\kappa_+}\leq ca_\kappa$.  This is indeed the case, since one has $\kappa_+\leq \kappa +f(0)^{-1}$ so that by Lemma~\ref{reglema}, $4\vphi_{\kappa_+}\sim  4 \frac{1-\alpha}{1-2\alpha}\,a_\kappa$.
The statement on the size of the maximal indegree is now an immediate consequence of Lemma~\ref{le0924-2}.

\bigskip

\emph{2nd Part:} We now prove that (an appropriately scaled version of) the evolution of a hub typically lies in an open neighbourhood around $z$.\smallskip

Let  $U$ denote an open set in $\cL(0,\infty)$ that includes $z$ and denote by $U^c$ its complement  in $\cL(0,\infty)$. 
Furthermore, we set
$$A_\eps =\Bigl\{x\in\cL(0,\infty): \max_{t\in[\frac12,1]} x_t\geq 2(u_\ma-\eps)\Bigr\}$$
for $\eps\geq0$.
We start by showing that $z$ is the unique minimizer of $I$ on the set $A_0$. Indeed, applying the inverse H\"older inequality gives, 
for $x\in A_0$ with finite rate $I(x)$,
$$
I(x)\geq \sfrac 12 \int_0^1 \dot x^2_t\, t^{\frac\alpha{1-\alpha}} \,dt\geq \sfrac12 \Bigl(\int_0^1 |\dot x_t|\,dt\Bigr)^2  \Bigl(\int_0^1 t^{-\frac\alpha{1-\alpha}}\,dt\Bigr)^{-1}\geq \sfrac12 \sfrac{1-\alpha}{1-2\alpha} =u_{\max} =I(z).
$$
Moreover, one of the three inequalities is a strict inequality when $x\not=z$.
Recall that, by Lemma~\ref{AA}, $I$ has compact level sets. We first assume that one of the entries in $U^c\cap A_0$ has finite rate $I$. Since $U^c\cap A_0$ is closed, we conclude that $I$ attains its infimum on $U^c\cap A_0$. Therefore,
$$
I(U^c\cap A_0):=\inf\{I(x) \colon x\in U^c\cap A_0\}>I(z)=u_{\max}.
$$
Conversely, using again compactness of the level sets, gives
$$
\lim_{\eps\dto0} I(U^c\cap A_\eps)=I(U^c\cap A_0).
$$
Therefore, there exists $\eps>0$ such that $I(U^c\cap A_\eps)>I(z)$. Certainly, this is also true if $U^c$ contains no element of finite rate.

From the moderate deviation principle, Theorem \ref{MDP}, together with the uniformity in~$s$, see~Proposition~\ref{expoapp},
we infer that
\begin{equation}\label{mdp0}
\limsup_{\kappa\to\infty} \frac1{a_\kappa} \max_{s\in\IT} \log \IP\bigl(\bar Z^{\ssup{s,\kappa}}\in U^c\cap A_\eps \bigr)\leq - I(U^c\cap A_\eps)< -I(z).
\end{equation}
It remains to show that $\IP(\bar Z^{s^*_\kappa,\kappa}\in U^c)$ converges to zero.
For $\eps>0$ and sufficiently large $\kappa$,
\begin{align*}
\IP(\bar Z^{s^*_\kappa,\kappa}\in U^c) &\leq  \IP(s^*_\kappa\not\in a_\kappa [u_{\max}-\eps,u_{\max}+\eps])\\
&\qquad + \IP \big(\max_{t\in[\frac12,1]} \bar Z^{s^*_\kappa,\kappa}_t \leq 2(u_{\max}-\eps), 
s^*_\kappa\in a_\kappa [u_{\max}-\eps,u_{\max}+\eps] \big)
 \\
&\qquad  +\IP\big(\bar Z^{s^*_\kappa,\kappa}\in U^c,\max _{t\in[\frac12,1]} 
\bar Z^{s^*_\kappa,\kappa}_t \geq 2 (u_{\max}-\eps), s^*_\kappa\in a_\kappa [u_{\max}-\eps,u_{\max}+\eps] \big).
\end{align*}
By the first part of the proof the first and second term in the last equation tend to $0$ for any $\eps>0$.
The last term can be estimated as follows
\begin{align}\begin{split}\label{eqest3}
\IP\bigl(\bar Z^{s^*_\kappa,\kappa}\in U^c,\bar Z^{s^*_\kappa,\kappa}_t \geq 2 (u_{\max}-\eps), & \, s^*_\kappa\in a_\kappa [u_{\max}-\eps,u_{\max}+\eps]\bigr)\\
& \leq \sum_{s\in \IT\cap a_\kappa [u_{\max}-\eps,u_{\max}+\eps]} \IP(\bar Z^{\ssup{s,\kappa}} \in U^c\cap A_\eps).
\end{split}
\end{align}
Moreover, $\log \# \bigl(\IT\cap a_\kappa [u_{\max}-\eps,u_{\max}+\eps]\bigr) \sim a_\kappa ( u_{\max}+\eps)$.
Since, for sufficiently small  $\eps>0$,  we have $I(U^c\cap A_\eps)>u_{\max}+\eps$ we infer from~\eqref{mdp0}
that the sum in (\ref{eqest3}) goes to zero.
\end{proof}

\begin{appendix}
\section{Appendix}

\subsection{Regularly varying attachment rules}

In the following we assume that $f\colon [0,\infty)\to (0,\infty)$ is a regularly varying attachment rule with index $\alpha<1$, and represent $f$ as $
f(u)=u^{\alpha} \ell(u)$, for $u>0$,
with a slowly varying function $\ell$.

\begin{lemma}\label{reglema} 
\begin{enumerate}
 \item 
One has
$$
\Phi(u)\sim \frac1{1-\alpha} \frac{u^{1-\alpha} }{\ell(u)}
$$
as $u$ tends to infinity and $\bar f$ admits the representation 
$$
\bar f(u)=f\circ \Phi^{-1}(u) = u^{\frac{\alpha}{1-\alpha}} \bar \ell (u),\qquad\mbox{ for } u>0,
$$
where
$\bar \ell$ is again a slowly varying function. 

\item If additionally $\alpha<\frac12$, then $$
\vphi_u=\int_0^u \frac 1{\bar f(u)}\,du\sim \frac{1-\alpha}{1-2\alpha} \frac{u^{\frac{1-2\alpha}{1-\alpha}}}{\bar\ell(u)}.
$$
\end{enumerate}
\end{lemma}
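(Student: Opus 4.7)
The plan is to apply Karamata's theorem on integrals of regularly varying functions (cf.\ \cite[Theorem~1.5.11]{BiGoTeu87}, already invoked in the main text) twice, with a brief application of the asymptotic inversion theorem for regularly varying functions sandwiched in between.

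For part (1), I would first note that $1/f(u) = u^{-\alpha}/\ell(u)$ is regularly varying with index $-\alpha>-1$ (here $\alpha<1$ is essential), so that Karamata's theorem applies to $\Phi(u)=\int_0^u 1/f(v)\,dv$; the behaviour at the lower endpoint is harmless since $f\geq f(0)>0$ there, so any finite contribution near $0$ is absorbed into the error term. This yields the first asymptotic directly. Consequently $\Phi$ itself is regularly varying with positive index $1-\alpha$, and I would then invoke the standard asymptotic inversion theorem for regularly varying functions (\cite[Theorem~1.5.12]{BiGoTeu87}) to conclude that $\Phi^{-1}$ is regularly varying with index $1/(1-\alpha)$, admitting a representation $\Phi^{-1}(u)=u^{1/(1-\alpha)}\tilde\ell(u)$ for some slowly varying $\tilde\ell$. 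Plugging this into $f(u)=u^\alpha\ell(u)$ produces the claimed representation of $\bar f$ with the explicit choice
$$\bar\ell(u) := \tilde\ell(u)^{\alpha}\,\ell(\Phi^{-1}(u)),$$
and slow variation of $\bar\ell$ then follows, since the composition of a slowly varying function with a regularly varying function of positive index is again slowly varying, and products of slowly varying functions are slowly varying.

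For part (2), the key observation is that the assumption $\alpha<\tfrac12$ is equivalent to $\alpha/(1-\alpha)<1$, which makes $1/\bar f$ regularly varying with index strictly greater than $-1$. A second application of Karamata's theorem then delivers the asymptotic for $\varphi_u$, with the prefactor $(1-\alpha)/(1-2\alpha)$ arising as the reciprocal of $1-\alpha/(1-\alpha) = (1-2\alpha)/(1-\alpha)$.

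There is no genuine obstacle in this proof; both assertions are essentially bookkeeping within the Karamata calculus, and the role of the assumptions $\alpha<1$ and $\alpha<\tfrac12$ is simply to keep the relevant exponents in the integrands above $-1$ so that Karamata applies. The only item requiring minimal care is the slow variation of $\bar\ell$ after composition with $\Phi^{-1}$, which I would handle as indicated above.
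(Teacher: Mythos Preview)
Your proposal is correct and follows essentially the same route as the paper: Karamata's theorem (Theorem~1.5.11 in \cite{BiGoTeu87}) for the asymptotic of $\Phi$, the inversion theorem (Theorem~1.5.12) for the regular variation of $\Phi^{-1}$, and Karamata again for $\vphi$. The only cosmetic difference is that the paper cites Proposition~1.5.7 of \cite{BiGoTeu87} directly for the regular variation of the composition $\bar f=f\circ\Phi^{-1}$, whereas you spell out the slowly varying factor $\bar\ell$ by hand; the content is identical.
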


\begin{proof} The results follow from the theory of regularly variation, and we briefly quote the relevant results taken from \cite{BiGoTeu87}. The asymptotic formula for $\Phi$ is an immediate consequence of Karamata's theorem, Theorem 1.5.11. Moreover, by Theorem 1.5.12, the inverse of $\Phi$ is regularly varying with index $(1-\alpha)^{-1}$ so that, by Proposition 1.5.7, the composition $\bar f=f\circ \Phi^{-1}$ is regularly varying with index $\frac\alpha{1-\alpha}$. The asymptotic statement about $\vphi$ follows again by Karamata's theorem.
\end{proof}

\begin{remark}\label{r:reglem}
In the particular case where $f(u)\sim c u^\alpha$, we obtain
$$
\Phi(u)\sim \sfrac1{c(1-\alpha)} \, u^{1-\alpha}, \ \Phi^{-1}(u)\sim (c(1-\alpha)u)^{\frac 1{1-\alpha}}
$$
and
$$
\bar f(u) \sim c^{\frac{1}{1-\alpha}} \big((1-\alpha)u \big)^{\frac\alpha{1-\alpha}}.
$$
\end{remark}

\subsection{Two concentration inequalities for martingales}

\begin{lemma}\label{l:hoef}
Let $(M_n)_{n\in \IZ_+}$ be a martingale for its canonical filtration $(\cF_n)_{n\in\IZ_+}$ with $M_0=0$. We assume that there are deterministic $\sig_n\in\IR$ and $M<\infty$ such that almost surely
\begin{itemize}
\item $\var(M_{n}|\cF_{n-1})\leq \sig_n^2$ and
\item $M_{n}-M_{n-1}\leq M$.
\end{itemize}
Then, for any $\lam>0$ and $m\iN$,
$$
\IP\Bigl(\sup_{n\le m} M_n\geq \lam +\sqrt{2\sum_{n=1}^m \sig_n^2}\Bigr) \leq 2 \exp \Bigl(-\frac{\lam^2}{2(\sum_{n=1}^m \sig_n^2+M\lam/3)}\Bigr).
$$
\end{lemma}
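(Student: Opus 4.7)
The inequality is of Bernstein--Freedman type; I would establish it by the exponential supermartingale method. The key ingredient is the one-sided Bernstein moment bound: for a centered real random variable $X$ with $X\le M$ and $\IE X^2\le \sigma^2$, and any $\theta\in[0,3/M)$,
$$\IE[e^{\theta X}] \;\le\; \exp\!\left(\frac{\theta^2\sigma^2/2}{1-\theta M/3}\right).$$
This follows from the estimate $\IE X^k \le M^{k-2}\sigma^2$ for $k\ge 2$ (using $X\le M$ and centering), combined with the elementary bound $\sum_{k\ge 2} (\theta M)^{k-2}/k! \le [2(1-\theta M/3)]^{-1}$ applied to the Taylor expansion of $e^{\theta X}$.

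Applying this inequality conditionally on $\cF_{n-1}$ to each increment $M_n-M_{n-1}$ (which has conditional mean zero, conditional variance at most $\sig_n^2$, and is bounded above by $M$), I infer that
$$Y_n \;:=\; \exp\!\left(\theta M_n - \frac{\theta^2 V_n/2}{1-\theta M/3}\right), \qquad V_n:=\sum_{k=1}^n \sig_k^2,$$
is a nonnegative $(\cF_n)$-supermartingale starting at $Y_0=1$. Since $V_n$ is deterministic and nondecreasing, Doob's maximal inequality applied to $(Y_n)$ gives, for every $t>0$,
$$\IP\!\left(\sup_{n\le m} M_n \ge t\right) \;\le\; \exp\!\left(-\theta t+\frac{\theta^2 V_m/2}{1-\theta M/3}\right).$$

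To finish I would fix the Bernstein-optimal $\theta_\ast := \lambda/(V_m+M\lambda/3) \in (0,3/M)$ and put $t:=\lambda+\sqrt{2V_m}$. A short calculation yields $1-\theta_\ast M/3 = 3V_m/(3V_m+M\lambda)$, hence $\theta_\ast^2 V_m/[2(1-\theta_\ast M/3)] = \lambda^2/[2(V_m+M\lambda/3)]$, so the exponent in Doob's bound reduces to
$$-\frac{\lambda^2}{2(V_m+M\lambda/3)} \;-\; \frac{\lambda\sqrt{2V_m}}{V_m+M\lambda/3};$$
dropping the nonpositive second summand yields the desired estimate, even with constant $1$ in place of $2$ (so the factor $2$ in the statement is mere slack for robustness). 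There is no serious obstacle: the only point worth care is the one-sided Bernstein moment inequality, which requires only the upper bound $X\le M$ (and not $|X|\le M$), matching the hypothesis here and explaining why the lemma controls $\sup_{n\le m} M_n$ rather than $\sup_{n\le m}|M_n|$.
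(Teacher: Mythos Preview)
Your argument is correct and in fact yields the inequality with constant~$1$ rather than~$2$. The route, however, is genuinely different from the paper's. The paper does \emph{not} build the exponential supermartingale directly; instead it (i) quotes a Bernstein--Freedman type bound for the terminal value $M_m$ from \cite{ChLu06}, giving $\IP(M_m\ge\lam)\le\exp\bigl(-\lam^2/[2(V_m+M\lam/3)]\bigr)$, and then (ii) reduces the maximal inequality to this by a stopping-time/Chebyshev trick: if $\tau$ is the first time $M_n$ exceeds $\lam+\sqrt{2V_m}$, then conditioning on $\{\tau=n\}$ and using $\var(M_m-M_n\mid\tau=n)\le V_m$ together with Chebyshev shows $\IP(M_m\ge\lam\mid\tau=n)\ge\frac12$, whence $\IP(\tau\le m)\le 2\,\IP(M_m\ge\lam)$. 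This explains both the offset $\sqrt{2V_m}$ and the factor~$2$ in the statement.

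Your approach bypasses that reduction entirely: since $V_n$ is deterministic, Doob's inequality for the nonnegative supermartingale $Y_n$ controls the running supremum directly, and the offset $\sqrt{2V_m}$ is then pure slack that only strengthens the bound. The gain is a self-contained proof (no external citation) with a sharper constant; the paper's version is more modular, separating the endpoint tail bound from the passage to the supremum, and the same stopping-time device is reused for Lemma~\ref{conc2}.
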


\begin{proof}
Let $\tau$ denote the first time $n\iN$ for which $M_n \geq  \lam +\sqrt {2\sum_{n=1}^m \sig_n^2}$.
Then
\begin{align*}
\IP(M_m\geq \lam) \geq \sum_{n=1}^m \IP(\tau =n) \, \IP\Bigl(M_m-M_n\geq -\sqrt{2\sum_{i=1}^m \sig_i^2}\,\Big|\,\tau=n\Bigr).
\end{align*}
Next, observe that
$\var (M_m-M_n\,|\,\tau=n)\leq \sum_{i={n+1}}^m \sig_i^2$
so that by Chebyshev's inequality
$$
\IP\Bigl(M_m-M_n\geq -\sqrt{2\sum_{i=1}^m \sig_i^2}\,\Big|\,\tau=n\Bigr)\geq 1/2.
$$
On the other hand, a  concentration inequality of Azuma type  gives
$$
\IP(M_m\geq \lam)\leq \exp \Bigl(-\frac{\lam^2}{2(\sum_{n=1}^m \sig_n^2+M\lam/3)}\Bigr)
$$
(see for instance \cite{ChLu06}, Theorem 2.21).
Combining these estimates immediately proves the assertion of the lemma.
\end{proof}

Similarly, one can use the  classical Azuma-Hoeffding inequality to prove the following concentration inequality.

\begin{lemma}\label{conc2}
Let $(M_n)_{n\in \IZ_+}$ be a martingale such that  almost surely
$|M_{n}-M_{n-1}|\leq c_n$ for given sequence $(c_n)_{n\iN}$. Then  for any $\lam>0$ and $m\iN$
$$
\IP\Bigl(\sup_{n\le m} |M_n-M_0| \geq \lam +\sqrt{2\sum_{n=1}^m c_n^2}\Bigr) \leq 4 \exp \Bigl(-\frac{\lam^2}{2\sum_{n=1}^m c_n^2}\Bigr).
$$
\end{lemma}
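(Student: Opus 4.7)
The plan is to reduce the supremum bound to the classical Azuma--Hoeffding endpoint bound by the same stopping-time device used in Lemma \ref{l:hoef}, but carried out symmetrically so as to cover both tails. Throughout write $s_m^2:=\sum_{n=1}^m c_n^2$ and let $\tau$ denote the first index $n\in\{1,\dots,m\}$ at which $|M_n-M_0|\geq \lam+\sqrt{2s_m^2}$, with $\tau=\infty$ if no such $n$ exists. The event whose probability we want to bound is then $\{\tau\le m\}$, and we split it further into $E^+:=\{\tau\le m,\, M_\tau-M_0>0\}$ and $E^-:=\{\tau\le m,\, M_\tau-M_0<0\}$.

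The heart of the argument is the following observation. On $\{\tau=n\}\cap E^+$ one has $M_n-M_0\ge \lam+\sqrt{2s_m^2}$, so if in addition $M_m-M_n\ge -\sqrt{2s_m^2}$, then $M_m-M_0\ge \lam$. Since $(M_n)$ is a martingale with $\IE[M_m-M_n\mid\cF_n]=0$ and $\var(M_m-M_n\mid\cF_n)\le\sum_{i=n+1}^m c_i^2\le s_m^2$, Chebyshev's inequality (applied conditionally on $\cF_n$) yields
\[
\IP\bigl(M_m-M_n\ge -\sqrt{2s_m^2}\,\bigm|\,\cF_n\bigr)\ \ge\ \tfrac12\qquad\text{on }\{\tau=n\}.
\]
Summing over $n$ and using that $\{\tau=n\}\in\cF_n$, this gives $\IP(M_m-M_0\ge \lam)\ge \tfrac12\IP(E^+)$. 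The mirror argument, replacing $M$ by $-M$, yields $\IP(M_m-M_0\le -\lam)\ge \tfrac12\IP(E^-)$. Adding the two inequalities,
\[
\IP\bigl(|M_m-M_0|\ge \lam\bigr)\ \ge\ \tfrac12\,\IP\bigl(E^+\cup E^-\bigr)\ =\ \tfrac12\,\IP\Bigl(\sup_{n\le m}|M_n-M_0|\ge \lam+\sqrt{2s_m^2}\,\Bigr).
\]

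It then remains to bound $\IP(|M_m-M_0|\ge\lam)$ from above. Because of the deterministic increment bound $|M_n-M_{n-1}|\le c_n$, the classical Azuma--Hoeffding inequality applied twice (to $M$ and $-M$) delivers
\[
\IP\bigl(|M_m-M_0|\ge \lam\bigr)\ \le\ 2\exp\!\Bigl(-\tfrac{\lam^2}{2 s_m^2}\Bigr).
\]
Combining this with the previous display and multiplying by $2$ gives the factor $4$ in the claimed inequality and completes the proof.

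\textbf{Main obstacle.} There is no substantial obstacle, only two minor points to get right. First, one must use the two-sided split $E^+\cup E^-$ (rather than the one-sided treatment of Lemma \ref{l:hoef}), which is what is responsible for the constant $4$ rather than $2$ in front of the exponential. Second, the variance estimate $\var(M_m-M_n\mid\cF_n)\le s_m^2$ used in Chebyshev's step is valid without the boundedness hypothesis, so the hypothesis $|M_n-M_{n-1}|\le c_n$ is used only in the final appeal to Azuma--Hoeffding.
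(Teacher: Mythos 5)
Your proof is correct and follows exactly the route the paper intends: the paper gives no written proof beyond the remark that one argues as for Lemma~\ref{l:hoef} with the classical Azuma--Hoeffding inequality in place of the Chung--Lu bound, and your stopping-time/Chebyshev reduction to the endpoint, carried out two-sidedly via the split into $E^+$ and $E^-$ to produce the constant $4$, is precisely that adaptation. The only quibble is your closing remark that the variance estimate holds ``without the boundedness hypothesis'' --- it is exactly the assumption $|M_i-M_{i-1}|\le c_i$ that yields $\var(M_m-M_n\mid\cF_n)\le\sum_{i=n+1}^m c_i^2$ --- but this side comment does not affect the validity of the argument.
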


\end{appendix}
\bigskip

{\bf Acknowledgments:} The first author is supported by a grant from
\emph{Deutsche Forschungsgemeinschaft (DFG)}, and the second author
enjoys the support of  the \emph{Engineering and Physical Sciences
Research Council (EPSRC)} through an Advanced Research Fellowship.
{We would also like to thank Remco~van~der~Hofstad for his
encouragement and directing our attention to the
reference~\cite{RTV07}.}
\bigskip




\end{document}